\def\red{\color{red}}
\def\rr{{\mathbb R}}
\def\rn{{{\rr}^n}}
\def\zz{{\mathbb Z}}
\def\nn{{\mathbb N}}
\def\ch{{\mathcal H}}
\def\cm{{\mathcal M}}
\def\cp{{\mathcal P}}
\def\cs{{\mathcal S}}
\def\cx{{\mathcal X}}
\def\RR{\mathbb R}
\def\fz{\infty}
\def\az{\alpha}
\def\ez{\epsilon}
\def\lz{\lambda}
\def\oz{\omega}
\def\tz{\theta}
\def\ch1{\mathbf{1}}
\def\lf{\left}
\def\r{\right}
\def\ls{\lesssim}
\def\noz{\nonumber}
\def\st{\subset}
\def\supp{\mathop\mathrm{\,supp\,}}
\def\lv{{L^{p(\cdot)}(\rn)}}
\newtheorem{theorem}{Theorem}[section]
\newtheorem{lemma}[theorem]{Lemma}
\newtheorem{proposition}[theorem]{Proposition}
\theoremstyle{definition}
\newtheorem{remark}[theorem]{Remark}
\newtheorem{definition}[theorem]{Definition}
\numberwithin{equation}{section}
\begin{document}

\arraycolsep=1pt

\title{\bf\Large The Variable Muckenhoupt Weight Revisited
\footnotetext {\hspace{-0.35cm}
2020 {\it Mathematics Subject Classification}.
Primary 42B25; Secondary 42B30, 42B20, 46E30.
\endgraf {\it Key words and phrases}.
variable Muckenhoupt weight, Hardy--Littlewood maximal operator,
weighted variable Hardy space, real-variable characterization,
Calder\'on--Zygmund operator, Bochener--Riesz means.}}

\author{\protect\parbox{\textwidth}{\protect\centering
Hongchao Jia and Xianjie Yan\footnote{Corresponding
author/{\red May 6, 2024}/Final version.}\\
\vspace{0.3cm}
\noindent\footnotesize{Institute of Contemporary Mathematics,
School of Mathematics and Statistics, Henan University,
Kaifeng 475004, The People's Republic of China\\
E-mails: \texttt{hongchaojia@henu.edu.cn} (H. Jia);\ 
\texttt{xianjieyan@henu.edu.cn} (X. Yan)}}}
\date{ }
\maketitle

\vspace{-0.8cm}

\begin{center}
\begin{minipage}{13cm}
{\small {\bf Abstract}\quad
Let $p(\cdot):\ \mathbb R^n\to(0,\infty)$ be a variable
exponent function and $X$ a ball quasi-Banach function space.
In this paper, we first study the relationship between two
kinds of variable weights $\mathcal{W}_{p(\cdot)}(\mathbb{R}^n)$
and $A_{p(\cdot)}(\mathbb{R}^n)$. Then, by regarding the weighted
variable Lebesgue space $L^{p(\cdot)}_{\omega}(\mathbb{R}^n)$
with $\omega\in\mathcal{W}_{p(\cdot)}(\mathbb{R}^n)$ as a special
case of $X$ and applying known results of the Hardy-type
space $H_{X}(\mathbb{R}^n)$ associated with $X$,
we further obtain several equivalent characterizations of
the weighted variable Hardy space $H^{p(\cdot)}_{\omega}(\rn)$
and the boundedness of some sublinear operators on $H^{p(\cdot)}_{\omega}(\rn)$.
All of these results coincide with or improve existing ones,
or are completely new.}
\end{minipage}
\end{center}

%\tableofcontents

\section{Introduction}\label{s1}

Given a locally integrable function $f$ on $\rn$,
the Hardy--Littlewood maximal operator $\cm(f)$ is defined by setting,
for any $x\in\rn$,
\begin{align*}
{\mathcal M}(f)(x):=\sup_{B\ni x}\frac1{|B|}\int_B |f(y)|\,dy.
\end{align*}
It is well-known that the boundedness of $\cm$ on weighted Lebesgue spaces
$L^{p}(\omega,\rn)$, with $p\in[1,\fz)$, is closely related to classical
Muckenhoupt weights (see, for instance, \cite{cf74,mu72}).
Based on this, the real-variable theory,
including some equivalent characterizations and boundedness of operators,
of weighted Hardy spaces has been developed
(see \cite{gr85,st}). On the other hand,
since intrinsic properties of variable function
spaces are widely applied to harmonic analysis,
partial differential equations and variational integrals with
nonstandard growth conditions (see \cite{dr03,fan08,mi06}),
real-variable theory of them has attracted many attentions
in recent years (see, for instance, \cite{ah10,cw14,ns12,xu08}).
Recall that the variable Lebesgue space $L^{p(\cdot)}(\rn)$
with $p(\cdot):\ \rn\rightarrow(0,\fz)$ is defined to be the
set of all measurable functions $f$ such that
\begin{equation*}
\|f\|_{\lv}:=\inf\lf\{\lz\in(0,\fz):\
\int_\rn\lf[\frac{|f(x)|}{\lz}\r]^{p(x)}\,dx\le1\r\}<\fz.
\end{equation*}
The boundedness of $\cm$ on $L^{p(\cdot)}(\rn)$
has also been studied in many academic literature
(see, for instance, \cite{cfbook,dhr11}).

There is an increasing interest to explore what conditions are
necessary for the boundedness of $\cm$ on the weighted variable
Lebesgue space because it generalizes both $L^{p}(\omega,\rn)$
and $L^{p(\cdot)}(\rn)$. To this end, lots of efforts are made;
see \cite{c22,c24,cc22,ghb18,kok21,kos08}.
Especially, Diening and H\"ast\"o \cite{dh-pre} first introduced
the variable Muckenhoupt weight $\mathbb{A}_{p(\cdot)}(\rn)$
to characterize the boundedness of $\cm$ on weighted variable
Lebesgue spaces $L^{p(\cdot)}(\omega,\rn)$, where the class
$\mathbb{A}_{p(\cdot)}(\rn)$ is defined to consist of all
weights $\omega$ such that, for any ball $B\st\rn$,
\begin{align*}
\lf\|\omega^{1/p(\cdot)}\ch1_{B}\r\|_{L^{p(\cdot)}(\rn)}
\lf\|\omega^{-1/p(\cdot)}\ch1_{B}\r\|_{L^{p'(\cdot)}(\rn)}\ls|B|,
\end{align*}
$p'(\cdot)$ denotes the conjugate variable exponent of $p(\cdot)$,
and $\omega\,dx$ is treated as a measure, namely,
\begin{equation*}
\|f\|_{L^{p(\cdot)}(\omega,\rn)}:=\lf\|f\omega^{1/p(\cdot)}\r\|_{L^{p(\cdot)}(\rn)}.
\end{equation*}
For more studies about $\mathbb{A}_{p(\cdot)}(\rn)$, we refer
the reader to \cite{ak21,ins15,inns20,inns23,mz16,ns21}.
Later, Cruz-Uribe et al. \cite{cdh11,cfn12} introduced other
variable Muckenhoupt weight $A_{p(\cdot)}(\rn)$ to characterize
the boundedness of $\cm$ on weighted variable Lebesgue spaces
$L^{p(\cdot)}_{\omega}(\rn)$, where the class
$A_{p(\cdot)}(\rn)$ is defined to consist of all
weights $\omega$ such that, for any ball $B\st\rn$,
\begin{align*}
\lf\|\omega\ch1_{B}\r\|_{L^{p(\cdot)}(\rn)}
\lf\|\omega\ch1_{B}\r\|_{L^{p'(\cdot)}(\rn)}\ls|B|,
\end{align*}
and $\omega$ is regarded as a multiplier, namely,
\begin{equation*}
\|f\|_{L^{p(\cdot)}_{\omega}(\rn)}:=\lf\|f\omega\r\|_{L^{p(\cdot)}(\rn)}.
\end{equation*}
For more studies about $A_{p(\cdot)}(\rn)$, we refer
the reader to \cite{bbd21,cdmo20,cg20,cw17,ho21,tan23}.
However, the real-variable theory of the weighted variable Hardy space $H_{\omega}^{p(\cdot)}(\mathbb{R}^n)$
with $\omega\in A_{p(\cdot)}(\rn)$ is still unknown.

To present some light on the above question,
Ho \cite{ho17} introduced the variable weight
$\mathcal{W}_{p(\cdot)}(\rn)$ in order to establish the
atomic characterizations of
$H^{p(\cdot)}_{\omega}(\rn)$ and obtain some important operators on it.
Since then, various variants of $H^{p(\cdot)}_{\omega}(\rn)$ have been
introduced and their real-variable theories have been well developed.
To be precise, Laadjal et al. \cite{lsmm22} established
molecular characterizations and the duality of
$H^{p(\cdot)}_{L,\omega}(\rn)$ with operators $L$
satisfying Davies--Gaffney estimates;
Melkemi et al. \cite{msm21} studied weighted variable
Hardy spaces $H^{p(\cdot)}_{\omega}$ on domains.
For more studies about $\mathcal{W}_{p(\cdot)}(\rn)$, we refer
the reader to \cite{ho19,r23}.

On the other hand, Sawano et al. \cite{shyy17} first
introduced the ball quasi-Banach function spaces $X$,
which extend quasi-Banach function spaces in \cite{cfbook} further
so that weighted Lebesgue spaces and Morrey spaces are included.
Sawano et al. in \cite{shyy17} also introduced the Hardy-type spaces ${H_X({\RR}^n)}$,
associated with $X$, and established their various equivalent
characterizations in terms of maximal functions, atoms,
molecules, and Lusin area functions. Later,
Wang et al. \cite{wyy} established Littlewood--Paley
functions characterizations of ${H_X({\RR}^n)}$ and
obtained the boundedness of Calder\'on--Zygmund
operators on ${H_X({\RR}^n)}$. Furthermore,
Yan et al. \cite{yyy20} established intrinsic square
function characterizations of ${H_X({\RR}^n)}$.
For more studies about the real-variable theory of ${H_X({\RR}^n)}$,
we refer the reader to \cite{cwyz19,ho23,tz23,wyy23}.

Very recently, Sawano points out that it is worth comparing
the variable weights $\mathcal{W}_{p(\cdot)}(\rn)$ and
$A_{p(\cdot)}(\rn)$ when he reviews \cite{lsmm22}.
Motivated by this and \cite{shyy17}, in this paper,
we first clarify the relation between variable weights
$\mathcal{W}_{p(\cdot)}(\rn)$ and $A_{p(\cdot)}(\rn)$.
Then, by viewing $L^{p(\cdot)}_{\omega}(\rn)$ with
$\omega\in\mathcal{W}_{p(\cdot)}(\rn)$ as a special case of
$X$ and applying known results of $H_X({\RR}^n)$,
we further establish several equivalent characterizations of
$H^{p(\cdot)}_{\omega}(\rn)$  in terms of maximal functions,
atoms, molecules, (intrinsic) Lusin area functions,
and (intrinsic) Littlewood--Paley $g$-functions and $g_{\lz}^*$-functions;
and also obtain the boundedness of some sublinear operators,
including Calder\'on--Zygmund operators and Bochner--Riesz means,
on $H^{p(\cdot)}_{\omega}(\rn)$. It is remarkable to point out that,
one part of these results improves or coincides with existing ones
and the other part is completely new.

The organization of the remainder of this article is as follows.

In Section \ref{s2}, we aim to clarify the relation between
$\mathcal{W}_{p(\cdot)}(\rn)$ and $A_{p(\cdot)}(\rn)$.
To this end, in Subsection \ref{s2x}, we first recall definitions
of $\mathcal{W}_{p(\cdot)}(\rn)$ and $L^{p(\cdot)}_{\omega}(\rn)$.
In Subsection \ref{s2y}, we introduce the concept of ball
quasi-Banach function spaces $X$ and show that
$L^{p(\cdot)}_{\omega}(\rn)$ with $\omega\in\mathcal{W}_{p(\cdot)}(\rn)$
is a special case of $X$. In Subsection \ref{s2z},
we first recall the definition of $A_{p(\cdot)}(\rn)$.
Then, by using the two key lemmas about $X$ (see Lemmas \ref{embedlem1}
and \ref{embedlem2} below), we show the relations between
$\mathcal{W}_{p(\cdot)}(\rn)$ and $A_{p(\cdot)}(\rn)$
(see Propositions \ref{relation1} and \ref{relation3} below).

Section \ref{s3} is devoted to characterizing the space
$H^{p(\cdot)}_{\omega}(\rn)$ in terms of maximal functions,
atoms, molecules, (intrinsic) Lusin area functions,
and (intrinsic) Littlewood--Paley $g$-functions and $g_{\lz}^*$-functions.
To this end, we first introduce the weighted variable Hardy space
$H^{p(\cdot)}_{\omega}(\rn)$ via Lusin area functions
in Subsection \ref{s3x}. Then, in Subsection \ref{s3y},
we prove that $L^{p(\cdot)}_{\omega}(\rn)$ with
$\omega\in\mathcal{W}_{p(\cdot)}(\rn)$ supports a
Fefferman--Stein vector-valued maximal inequality,
and that the powered Hardy--Littlewood maximal operator
is bounded on the associate space of $L^{p(\cdot)}_{\omega}(\rn)$
(see Theorem \ref{condition} below).
Finally, by viewing $H_\omega^{p(\cdot)}({{\RR}^n})$ as special
cases of Hardy spaces $H_X({{\RR}^n})$ associated with ball
quasi-Banach function spaces $X$ and applying known results of $H_X({{\RR}^n})$,
we obtain equivalent characterizations of $H_\omega^{p(\cdot)}({{\RR}^n})$
in Subsection \ref{s3z}.

In Section \ref{s4}, we establish the boundedness of Calder\'on--Zygmund
operators on $H_\omega^{p(\cdot)}({{\RR}^n})$ and of Bochner--Riesz means
from $H_\omega^{p(\cdot)}({{\RR}^n})$ to $L_\omega^{p(\cdot)}({{\RR}^n})$,
respectively, in Theorems \ref{cz-bdn} and \ref{br-bdn}.
Especially, the obtained results improves the known ones.

At the end of this section,
we make some conventions on notation.
Let $\nn:=\{1,2,\ldots\}$ and $\zz_+:=\nn\cup\{0\}$.
We denote by $C$ a \emph{positive constant} which is independent
of the main parameters, but may vary from line to line.
We use $C_{(\az,\dots)}$ to denote a positive constant depending
on the indicated parameters $\az,\, \dots$.
The symbol $f\ls g$ means $f\le Cg$
and, if $f\ls g\ls f$, then we write $f\sim g$.
If $f\le Cg$ and $g=h$ or $g\le h$,
we then write $f\ls g=h$ or $f\ls g\le h$,
rather than $f\ls g\sim h$ or $f\ls g\ls h$.
If $E$ is a subset of $\cx$, we denote by ${\mathbf{1}}_E$ its
\emph{characteristic function} and by $E^\complement$
the set $\cx\setminus E$.
For any $x\in\rn$ and $r\in(0,\fz)$, we denote by $B(x,r)$ the ball
centered at $x$ with the radius $r$, namely,
$B(x,r):=\{y\in\rn:\ d(x,y)<r\}.$
For any ball $B$, we use $x_B$ to denote its center and $r_B$ its radius,
and denote by $\lz B$ for any $\lz\in(0,\fz)$ the ball concentric with
$B$ having the radius $\lz r_B$. For any $\az\in\rr$,
we denote by $\lfloor\az\rfloor$ the largest integer
not greater than $\az$. For any index $q\in[1,\fz]$,
we denote by $q'$ its \emph{conjugate index}, namely, $1/q+1/q'=1$.

\section{The Relation Between Two Variable Muckenhoupt Weights\label{s2}}

In this section, we first recall some basic concepts about
Hardy--Littlewood maximal operators and weighted
variable Lebesgue spaces in Subsection \ref{s2x}.
We also recall some basic concepts about ball
quasi-Banach function spaces in Subsection \ref{s2y},
which are used throughout this article.
Then we introduce two variable Muckenhoupt weights
and clarify their relation in Subsection \ref{s2z}.

\subsection{Weighted Variable Lebesgue Spaces\label{s2x}}

In this subsection, we recall some concepts of
weighted variable Lebesgue spaces.

A measurable function $p(\cdot):\ \rn\to(0,\fz)$ is called a \emph{variable exponent}.
Denote by $\cp(\rn)$ the \emph{set of all variable exponents}
$p(\cdot)$ satisfying
\begin{align}\label{2.1x}
0<p_-:=\mathop\mathrm{ess\,inf}_{x\in \rn}p(x)\le
\mathop\mathrm{ess\,sup}_{x\in \rn}p(x)=:p_+<\fz.
\end{align}

In what follows, for any $p(\cdot)\in\cp(\rn)$,
let
\begin{align}\label{2.28.x1}
p_*:=\min\{p_-,1\}
\end{align}
and $p'(\cdot)$ be the \emph{conjugate variable exponent} of $p(\cdot)$,
namely, for any $x\in\rn$, $\frac{1}{p(x)}+\frac{1}{p'(x)}=1$.

For any given measurable function $f$ on $\rn$ and $p(\cdot)\in\cp(\rn)$,
the \emph{modular functional} (or, simply, the \emph{modular})
$\varrho_{p(\cdot)}$, associated with $p(\cdot)$, is defined by
setting $$\varrho_{p(\cdot)}(f):=\int_\rn|f(x)|^{p(x)}\,dx$$
and the \emph{Luxemburg} (also known as \emph{Luxemburg--Nakano})
\emph{quasi-norm} is given by setting
\begin{equation*}
\|f\|_{\lv}:=\inf\lf\{\lz\in(0,\fz):\
\varrho_{p(\cdot)}\lf(\frac{f}{\lz}\r)\le1\r\}.
\end{equation*}
Then the \emph{variable Lebesgue space} $\lv$ is defined to be the
set of all measurable functions $f$ such that $\varrho_{p(\cdot)}(f)<\fz$,
equipped with the quasi-norm $\|f\|_{\lv}$.

Now, we recall the definition of weighted variable
Lebesgue spaces from \cite[Definition 2.2]{ho17}
(see also \cite[p.\,70]{lsmm22}).

\begin{definition}\label{wvls}
Let $p(\cdot)\in\cp(\rn)$ and $\omega$ be a measurable function
such that $\omega\in(0,\fz)$ almost everywhere in $\rn$.
The \emph{weighted variable Lebesgue space} $L^{p(\cdot)}_{\omega}(\rn)$
is defined to be the set of all measurable functions on ${{\RR}^n}$
such that
\begin{align*}
\|f\|_{L^{p(\cdot)}_{\omega}(\rn)}:=\|f\omega\|_{L^{p(\cdot)}(\rn)}<\fz.
\end{align*}
\end{definition}

For any given $r\in(0,{\infty})$, we denote by $L_{\rm loc}^r({{\RR}^n})$
the \emph{set of all $r$-locally integrable functions on ${{\RR}^n}$}.
Recall that the \emph{Hardy--Littlewood maximal operator} ${\mathcal M}$
is defined by setting, for any $f\in L_{\rm loc}^1({{\RR}^n})$ and $x\in{{\RR}^n}$,
\begin{align*}
{\mathcal M}(f)(x):=\sup_{B\ni x}\frac1{|B|}\int_B |f(y)|\,dy,
\end{align*}
where the supremum is taken over all balls $B$ of ${{\RR}^n}$ containing $x$.

Next, we recall the definition of variable Muckenhoupt weights
from \cite[Definition 2.3]{ho17}
(see also \cite[Definition 1]{lsmm22}).

\begin{definition}\label{variable weight}
Let $p(\cdot)\in\cp(\rn)$.
We denote by $\mathcal{W}_{p(\cdot)}(\rn)$ the
\emph{set of all measurable functions $\omega$ on $\rn$}
such that $\omega\in(0,\fz)$ almost everywhere and
\begin{enumerate}
\item[{\rm(i)}] for any ball $B\subset\rn$,
$\|\ch1_B\|_{L^{p(\cdot)/p_*}_{\omega^{p_*}}(\rn)}<\fz$ and
$\|\ch1_B\|_{L^{(p(\cdot)/p_*)'}_{\omega^{-p_*}}(\rn)}<\fz$,
where $p_*$ is the same as in \eqref{2.28.x1};

\item[{\rm(ii)}] there exist $\kappa,s\in(1,\fz)$ such
that the Hardy--Littlewood maximal operator $\cm$ is bounded
on $L^{(sp(\cdot))'/\kappa}_{\omega^{-\kappa/s}}(\rn)$.
\end{enumerate}
\end{definition}

Let $p\in[1,\fz)$ and $\omega\in L^1_{\rm loc}(\rn)$ be a nonnegative function.
Recall that $\omega$ is said to be a \emph{Muckenhoupt $A_p$-weight},
denoted by $\omega\in A_p(\rn)$, if
\begin{align}\label{21.7.14.x1}
[\omega]_{A_p(\rn)}:=\sup_{B\subset\rn}\frac{1}{|B|^p}\int_B\omega(x)\,dx
\lf\{\int_B\lf[\omega(y)\r]^{1-p'}\,dy\r\}^{p/p'}<\fz
\end{align}
when $p\in(1,\fz)$, or
\begin{align*}
[\omega]_{A_1(\rn)}:=\sup_{B\subset\rn}\frac{1}{|B|}\int_B\omega(x)\,dx
\lf(\mathop\mathrm{ess\,sup}_{y\in B}\lf[\omega(y)\r]^{-1}\r)<\fz,
\end{align*}
where the suprema are taken over all balls
$B\subset\rn$ (see, for instance, \cite{st}).
Throughout this article, we always let
$$A_{\fz}(\rn):=\bigcup_{p\in[1,\fz)}A_{p}(\rn).$$

\begin{remark}\label{vwrem}
As was pointed out in \cite[p.\,388]{ho17},
when $p(\cdot)\equiv p\in(0,1]$,
Definition \ref{variable weight}(i) is equivalent to the assumption
that $\omega^p$ and $\omega^{-p'}$ are locally integrable;
when $p(\cdot)\equiv p\in(1,\fz)$,
Definition \ref{variable weight}(i) is equivalent to
the assumption that $\omega$ is locally integrable
and $\omega^{-1}$ is locally bounded;
when $p(\cdot)\equiv p\in(0,\fz)$,
Definition \ref{variable weight}(ii) is equivalent to
the assumption that $\omega^p\in A_{\fz}(\rn)$.
\end{remark}

\subsection{Ball Quasi-Banach Function Spaces}\label{s2y}

In this subsection, we recall some concepts of ball quasi-Banach function spaces.
To this end, we first recall the definition of ball quasi-Banach
function spaces (see, for instance, \cite[Definition 2.2]{shyy17}).
In what follows, we always let $\mathscr M(\rn)$ be the
\emph{set of all measurable functions} on ${{\RR}^n}$.

\begin{definition}\label{BQBFS}
A quasi-normed linear space $X\subset\mathscr M(\rn)$,
equipped with a quasi-norm $\|\cdot\|$ which
makes sense for the whole $\mathscr M(\rn)$, is called
a \emph{ball quasi-Banach function space} if it satisfies that
\begin{enumerate}
\item[{\rm(i)}] for any $f\in\mathscr M(\rn)$, $\|f\|_X=0$
implies that $f=0$ almost everywhere;

\item[{\rm(ii)}] for any $f,g\in\mathscr M(\rn)$, $|g|\le|f|$ almost everywhere
implies that $\|g\|_X\le\|f\|_X$;

\item[{\rm(iii)}] for any $\{f_m\}_{m\in\nn}\subset\mathscr M(\rn)$
and $f\in\mathscr M(\rn)$, $0\le f_m\uparrow f$ as $m\to\infty$
almost everywhere implies that
$\|f_m\|_X\uparrow\|f\|_X$ as $m\to\infty$;

\item[{\rm(iv)}] $\ch1_B \in X$ for any ball $B\subset\rn$.
\end{enumerate}

Moreover, a {ball quasi-Banach function} space $X$ is
called a \emph{ball Banach function space} if it satisfies:
\begin{enumerate}
\item[{\rm(v)}] for any $f,g\in X$, $\|f+g\|_X\leq\|f\|_X+\|g\|_X$;

\item[{\rm(vi)}] for any given ball $B\subset\rn$,
there exists a positive constant $C_{(B)}$ such that,
for any $f\in X$,
\begin{equation*}
\int_B |f(x)|\,dx \leq C_{(B)}\|f\|_X.
\end{equation*}
\end{enumerate}
\end{definition}

The associate space $X'$ of any given ball
Banach function space $X$ is defined as follows;
see \cite[Chapter 1, Section 2]{bs88} or \cite[p.\,9]{shyy17}.

\begin{definition}\label{associate space}
For any given ball Banach function space $X$,
its \emph{associate space} (also called the
\emph{K\"othe dual space}) $X'$ is defined by setting
\begin{equation*}
X':=\lf\{f\in\mathscr M(\rn):\ \|f\|_{X'}
:=\sup_{g\in X,\ \|g\|_X=1}\lf\|fg\r\|_{L^1(\rn)}<\infty\r\},
\end{equation*}
where $\|\cdot\|_{X'}$ is called the \emph{associate norm} of $\|\cdot\|_X$.
\end{definition}

Now, we recall the concept of the
$p$-convexification of ball quasi-Banach function spaces,
which is a part of \cite[Definition 2.6]{shyy17}.

\begin{definition}\label{Debf}
Let $X$ be a ball quasi-Banach function space and $p\in(0,\infty)$.
The \emph{$p$-convexification} $X^p$ of $X$ is defined by setting
$$X^p:=\lf\{f\in\mathscr M(\rn):\ |f|^p\in X\r\}$$
equipped with the \emph{quasi-norm} $\|f\|_{X^p}:=\|\,|f|^p\|_X^{1/p}$
for any $f\in X^p$.
\end{definition}

Next, we prove that the weighted variable Lebesgue space
$L^{p(\cdot)}_{\omega}(\rn)$, with $p(\cdot)\in\cp(\rn)$
and $\omega\in\mathcal{W}_{p(\cdot)}(\rn)$,
is a ball quasi-Banach function space.

\begin{lemma}\label{lx.x}
Let $p(\cdot)\in\cp(\rn)$ and $\omega\in\mathcal{W}_{p(\cdot)}(\rn)$.
Then $L^{p(\cdot)}_{\omega}(\rn)$ is a ball quasi-Banach function space.
Moreover, if $p_-\ge 1$, then $L^{p(\cdot)}_{\omega}(\rn)$
is a ball Banach function space.
\end{lemma}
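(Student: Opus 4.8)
The plan is to transfer the ball quasi-Banach function space axioms of Definition \ref{BQBFS} from the unweighted space $\lv$, which is well known to be a ball quasi-Banach function space (see, e.g., \cite{cfbook}), to $L^{p(\cdot)}_{\omega}(\rn)$ via the multiplier $\omega$. The key point is that, since $\omega\in(0,\fz)$ almost everywhere, the map $f\mapsto f\omega$ is an order-preserving bijection on $\mathscr{M}(\rn)$ (modulo null sets) and $\|f\|_{L^{p(\cdot)}_{\omega}(\rn)}=\|f\omega\|_{\lv}$ by definition. Consequently, Definition \ref{BQBFS}(i) holds because $\|f\|_{L^{p(\cdot)}_{\omega}(\rn)}=0$ forces $f\omega=0$, hence $f=0$, almost everywhere; Definition \ref{BQBFS}(ii) holds because $|g|\le|f|$ almost everywhere implies $|g\omega|\le|f\omega|$ almost everywhere; and Definition \ref{BQBFS}(iii) holds because $0\le f_m\uparrow f$ implies $0\le f_m\omega\uparrow f\omega$. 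In each case one simply invokes the corresponding property of $\lv$.

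The only axiom that genuinely uses $\omega\in\cw_{p(\cdot)}(\rn)$ is Definition \ref{BQBFS}(iv). Here I would first record the scaling identity $\||h|^{s}\|_{L^{q(\cdot)}(\rn)}=\|h\|_{L^{sq(\cdot)}(\rn)}^{s}$, valid for all $s\in(0,\fz)$ and $q(\cdot)\in\cp(\rn)$, which follows directly from the definition of the modular after the substitution $\lz\mapsto\lz^{s}$. Applying it with $h=\ch1_{B}\omega$, $s=p_*$, and $q(\cdot)=p(\cdot)/p_*$, and using $\ch1_{B}^{p_*}=\ch1_{B}$, yields
\begin{align*}
\|\ch1_{B}\omega\|_{\lv}^{p_*}
&=\lf\|(\ch1_{B}\omega)^{p_*}\r\|_{L^{p(\cdot)/p_*}(\rn)}
=\lf\|\ch1_{B}\omega^{p_*}\r\|_{L^{p(\cdot)/p_*}(\rn)}\\
&=\|\ch1_{B}\|_{L^{p(\cdot)/p_*}_{\omega^{p_*}}(\rn)},
\end{align*}
which is finite by Definition \ref{variable weight}(i). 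Thus $\|\ch1_{B}\|_{L^{p(\cdot)}_{\omega}(\rn)}=\|\ch1_{B}\omega\|_{\lv}<\fz$, so $\ch1_{B}\in L^{p(\cdot)}_{\omega}(\rn)$ for every ball $B$.

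For the final assertion, assume $p_-\ge1$, so that $p_*=1$ and $\lv$ is a ball Banach function space. Definition \ref{BQBFS}(v) transfers immediately, since $\|(f+g)\omega\|_{\lv}\le\|f\omega\|_{\lv}+\|g\omega\|_{\lv}$ by the triangle inequality in $\lv$. For Definition \ref{BQBFS}(vi), I would write $|f|=|f\omega|\,\omega^{-1}$ on $B$ and apply the generalized H\"older inequality for variable Lebesgue spaces to obtain
\begin{align*}
\int_{B}|f(x)|\,dx
\ls\|f\omega\|_{\lv}\lf\|\omega^{-1}\ch1_{B}\r\|_{L^{p'(\cdot)}(\rn)}.
\end{align*}
Since $p_*=1$, Definition \ref{variable weight}(i) ensures $\|\omega^{-1}\ch1_{B}\|_{L^{p'(\cdot)}(\rn)}=\|\ch1_{B}\|_{L^{(p(\cdot)/p_*)'}_{\omega^{-p_*}}(\rn)}<\fz$, so Definition \ref{BQBFS}(vi) holds with $C_{(B)}$ a constant multiple of this finite norm. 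The proof is essentially a routine transfer; the only point demanding care is the bookkeeping of the power $p_*$ in Definition \ref{variable weight}(i), which must be matched to the Luxemburg quasi-norm through the scaling identity above. I would also remark that condition (ii) of Definition \ref{variable weight} (the boundedness of $\cm$) plays no role in this lemma, which concerns only the function-space structure of $L^{p(\cdot)}_{\omega}(\rn)$.
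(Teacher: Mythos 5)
Your proposal is correct and follows essentially the same route as the paper's proof: axioms (i)--(iii) and (v) are transferred from the unweighted space $\lv$ via the multiplier $\omega$, axiom (iv) is verified by the same $p_*$-scaling of the Luxemburg quasi-norm reducing it to Definition \ref{variable weight}(i), and axiom (vi) follows from the generalized H\"older inequality exactly as in the paper. Your closing remark that Definition \ref{variable weight}(ii) is never used is also accurate.
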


\begin{proof}
On the one hand, by Definition \ref{wvls} and
\cite[Theorems 2.17 and 2.59]{cfbook}, we find that
$L^{p(\cdot)}_{\omega}(\rn)$ satisfies (i), (ii), and (iii)
of Definition \ref{BQBFS}. On the other hand,
from Definition \ref{variable weight}(i),
we deduce that, for any ball $B\st\rn$,
\begin{align*}
\|\ch1_B\|_{L^{p(\cdot)}_{\omega}(\rn)}^{p_*}
&=\|\omega\ch1_B\|_{L^{p(\cdot)}(\rn)}^{p_*}
=\lf[\inf\lf\{\lz\in(0,\fz):\ \varrho_{p(\cdot)}\lf(\frac{\omega\ch1_B}{\lz}\r)\le1\r\}\r]^{p_*}\\
&=\inf\lf\{\lz^{p_*}\in(0,\fz):\ \int_{B}\lf[\frac{\omega(x)}{\lz}\r]^{p(x)}\,dx\le1\r\}\\
&=\inf\lf\{\lz^{p_*}\in(0,\fz):\ \int_{B} \lf\{\frac{[\omega(x)]^{p_*}}{\lz^{p_*}}\r\}^{p(x)/p_*}\,dx\le1\r\}\\
&=\inf\lf\{t\in(0,\fz):\
\varrho_{p(\cdot)/p_*}\lf(\frac{\omega^{p_*}\ch1_B}{t}\r)\le1\r\}\\
&=\lf\|\omega^{p_*}\ch1_B\r\|_{L^{p(\cdot)/p_*}(\rn)}
=\lf\|\ch1_B\r\|_{L^{p(\cdot)/p_*}_{\omega^{p_*}}(\rn)}<\fz,
\end{align*}
which further implies that $L^{p(\cdot)}_{\omega}(\rn)$
satisfies Definition \ref{BQBFS}(iv).
Thus, $L^{p(\cdot)}_{\omega}(\rn)$ is a ball quasi-Banach function space.
If $p_-\ge 1$, then $p_*=1$. By this, Definition \ref{wvls},
\cite[Theorems 2.17 and 2.26]{cfbook},
and Definition \ref{variable weight}(i),
we conclude that $L^{p(\cdot)}_{\omega}(\rn)$ satisfies
Definition \ref{BQBFS}(v) and, for any given ball $B\st\rn$
and any $f\in L^{p(\cdot)}_{\oz}(\rn)$,
\begin{align*}
\int_B |f(x)|\,dx
&=\int_{\rn} |f(x)|\omega(x)\ch1_B(x)[\omega(x)]^{-1}\,dx
\ls\lf\|f\oz\r\|_{L^{p(\cdot)}(\rn)}
\lf\|\ch1_B\omega^{-1}\r\|_{L^{p'(\cdot)}(\rn)}\\
&=\lf\|f\r\|_{L^{p(\cdot)}_{\oz}(\rn)}
\lf\|\ch1_B\r\|_{L^{p'(\cdot)}_{\omega^{-1}}(\rn)}
=\lf\|f\r\|_{L^{p(\cdot)}_{\oz}(\rn)}
\|\ch1_B\|_{L^{(p(\cdot)/p_*)'}_{\omega^{-p_*}}(\rn)}.
\end{align*}
This implies that $L^{p(\cdot)}_{\omega}(\rn)$
satisfies Definition \ref{BQBFS}(vi) and hence
completes the proof of Lemma \ref{lx.x}.
\end{proof}

Now, we state a fundamental conclusion about the $\theta$-convexification
($\tz\in(0,\fz)$) of weighted variable Lebesgue spaces.

\begin{lemma}\label{lx.y}
Let $p(\cdot)\in\cp(\rn)$, $\omega\in\mathcal{W}_{p(\cdot)}(\rn)$, and $\tz\in(0,\fz)$.
Then $$\lf(L^{p(\cdot)}_{\omega}(\rn)\r)^{\tz}=L^{\tz p(\cdot)}_{\omega^{1/\tz}}(\rn)$$
with equivalent quasi-norms, where $(L^{p(\cdot)}_{\omega}(\rn))^{\tz}$
denotes the $\theta$-convexification of $L^{p(\cdot)}_{\omega}(\rn)$.
\end{lemma}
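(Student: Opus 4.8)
The plan is to reduce the whole identity to the elementary homogeneity of the Luxemburg quasi-norm under taking powers. Concretely, I would first record the scaling identity that, for any $g\in\mathscr M(\rn)$, any $q(\cdot)\in\cp(\rn)$, and any $s\in(0,\fz)$,
\begin{align}\label{e-scale}
\lf\||g|^{s}\r\|_{L^{q(\cdot)/s}(\rn)}=\|g\|_{L^{q(\cdot)}(\rn)}^{s}.
\end{align}
This follows directly from the definitions of the modular $\varrho_{q(\cdot)}$ and the Luxemburg quasi-norm: for any $\lz\in(0,\fz)$ one has, with the substitution $\lz=\mu^{s}$, $\varrho_{q(\cdot)/s}(|g|^{s}/\lz)=\int_{\rn}(|g(x)|/\mu)^{q(x)}\,dx=\varrho_{q(\cdot)}(g/\mu)$, so the two modular conditions ``$\le1$'' are equivalent exactly when $\lz=\mu^{s}$; taking infima then yields \eqref{e-scale}. (Should one prefer, \eqref{e-scale} can instead be cited directly from \cite{cfbook}.)

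With \eqref{e-scale} available, the proof is a short computation. First note that $\tz p(\cdot)\in\cp(\rn)$, since $0<\tz p_-\le\tz p_+<\fz$, and that $\omega^{1/\tz}$ is measurable and positive almost everywhere, so that $L^{\tz p(\cdot)}_{\omega^{1/\tz}}(\rn)$ is well defined in the sense of Definition \ref{wvls}. By Definition \ref{Debf}, a function $f\in\mathscr M(\rn)$ lies in $(L^{p(\cdot)}_{\omega}(\rn))^{\tz}$ if and only if $|f|^{\tz}\in L^{p(\cdot)}_{\omega}(\rn)$, and then
\begin{align*}
\|f\|_{(L^{p(\cdot)}_{\omega}(\rn))^{\tz}}
=\lf\||f|^{\tz}\r\|_{L^{p(\cdot)}_{\omega}(\rn)}^{1/\tz}
=\lf\||f|^{\tz}\omega\r\|_{L^{p(\cdot)}(\rn)}^{1/\tz}.
\end{align*}
Writing $g:=f\omega^{1/\tz}$, so that $|g|^{\tz}=|f|^{\tz}\omega$, and applying \eqref{e-scale} with $s=\tz$ and $q(\cdot)=\tz p(\cdot)$, I obtain
\begin{align*}
\lf\||f|^{\tz}\omega\r\|_{L^{p(\cdot)}(\rn)}
=\lf\||g|^{\tz}\r\|_{L^{\tz p(\cdot)/\tz}(\rn)}
=\|g\|_{L^{\tz p(\cdot)}(\rn)}^{\tz}
=\lf\|f\omega^{1/\tz}\r\|_{L^{\tz p(\cdot)}(\rn)}^{\tz}.
\end{align*}
Combining the two displays shows that $\|f\|_{(L^{p(\cdot)}_{\omega}(\rn))^{\tz}}=\|f\omega^{1/\tz}\|_{L^{\tz p(\cdot)}(\rn)}=\|f\|_{L^{\tz p(\cdot)}_{\omega^{1/\tz}}(\rn)}$, so the two quasi-norms are in fact equal, and the two spaces therefore coincide as sets.

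I do not expect a genuine obstacle in this argument; it is pure bookkeeping built on the power-homogeneity of $\varrho_{p(\cdot)}$. The only points needing a word of care are verifying that $\tz p(\cdot)$ still belongs to $\cp(\rn)$ (so that the target space is meaningful) and tracking the exponent on the weight: the relation $|f\omega^{1/\tz}|^{\tz}=|f|^{\tz}\omega$ forces the weight to transform as $\omega\mapsto\omega^{1/\tz}$, and this is the single place where an exponent slip could occur. Note that the Muckenhoupt-type condition $\omega\in\cw_{p(\cdot)}(\rn)$ plays no role in the equality of quasi-norms; it is relevant only if one additionally wishes to know that the spaces in question are ball quasi-Banach function spaces, which is the content of Lemma \ref{lx.x}.
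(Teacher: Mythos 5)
Your proof is correct and follows essentially the same route as the paper: both arguments are the same Luxemburg-norm bookkeeping, with the substitution $\lz=\mu^{\tz}$ that you isolate as the scaling identity \eqref{e-scale} appearing inline in the paper's chain of equalities, and both yield equality (not merely equivalence) of the quasi-norms. Your closing remark that $\omega\in\cw_{p(\cdot)}(\rn)$ is not needed for the norm identity is also accurate, with the one caveat that the paper invokes Lemma \ref{lx.x} so that Definition \ref{Debf} (stated for ball quasi-Banach function spaces) formally applies to $L^{p(\cdot)}_{\omega}(\rn)$.
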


\begin{proof}
Indeed, by Lemma \ref{lx.x} and Definitions \ref{Debf} and \ref{wvls},
we find that, for any $f\in\mathscr M(\rn)$,
\begin{align*}
\|f\|_{(L^{p(\cdot)}_{\omega}(\rn))^{\tz}}
&=\lf\||f|^{\tz}\r\|_{L^{p(\cdot)}_{\omega}(\rn)}^{1/\tz}
=\lf\|\omega|f|^{\tz}\r\|_{L^{p(\cdot)}(\rn)}^{1/\tz}\\
&=\lf[\inf\lf\{\lz\in(0,\fz):\ \varrho_{p(\cdot)}\lf(\frac{\omega|f|^{\tz}}{\lz}\r)\le1\r\}\r]^{1/\tz}\\
&=\inf\lf\{\lz^{1/\tz}\in(0,\fz):\ \int_{\rn}\lf[\frac{\omega(x)|f(x)|^{\tz}}{\lz}\r]^{p(x)}\,dx\le1\r\}\\
&=\inf\lf\{\lz^{1/\tz}\in(0,\fz):\
\int_{B}\lf\{\frac{[\omega(x)]^{1/\tz}|f(x)|}{\lz^{1/\tz}}\r\}^{\tz p(x)}\,dx\le1\r\}\\
&=\inf\lf\{t\in(0,\fz):\
\varrho_{\tz p(\cdot)}\lf(\frac{\omega^{1/\tz}f}{t}\r)\le1\r\}\\
&=\lf\|\omega^{1/\tz}f\r\|_{L^{\tz p(\cdot)}(\rn)}
=\lf\|f\r\|_{L^{\tz p(\cdot)}_{\omega^{1/\tz}}(\rn)},
\end{align*}
which completes the proof of Lemma \ref{lx.y}.
\end{proof}

At the end of this subsection, we state a fundamental
conclusion about associate spaces of weighted variable Lebesgue spaces.

\begin{lemma}\label{1.17.x1}
Let $p(\cdot)\in\cp(\rn)$ satisfy $p_-\in[1,\fz)$
with $p_-$ the same as in \eqref{2.1x},
and $\omega\in\mathcal{W}_{p(\cdot)}(\rn)$.
Then $$\lf(L^{p(\cdot)}_{\omega}(\rn)\r)'=L^{p'(\cdot)}_{\omega^{-1}}(\rn)$$
with equivalent quasi-norms, where $p'(\cdot)$ is the
conjugate variable exponent of $p(\cdot)$.
\end{lemma}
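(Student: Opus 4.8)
The plan is to reduce the weighted duality to the classical duality $(L^{p(\cdot)}(\rn))'=L^{p'(\cdot)}(\rn)$ for variable Lebesgue spaces via a change of variable that absorbs the weight. First, since $p_-\ge1$, Lemma \ref{lx.x} guarantees that $L^{p(\cdot)}_{\omega}(\rn)$ is a ball Banach function space, so its associate space $(L^{p(\cdot)}_{\omega}(\rn))'$ is well defined in the sense of Definition \ref{associate space}. The strategy is then to unfold the definition of the associate norm and, using that $\omega\in(0,\fz)$ almost everywhere, to recognize the resulting supremum as the associate norm of a suitably reweighted function in the \emph{unweighted} space $L^{p(\cdot)}(\rn)$.

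Concretely, I would fix $f\in\mathscr M(\rn)$ and compute, by Definitions \ref{associate space} and \ref{wvls},
\begin{align*}
\lf\|f\r\|_{(L^{p(\cdot)}_{\omega}(\rn))'}
&=\sup_{\|g\|_{L^{p(\cdot)}_{\omega}(\rn)}=1}\lf\|fg\r\|_{L^1(\rn)}
=\sup_{\|g\omega\|_{L^{p(\cdot)}(\rn)}=1}\int_{\rn}|f(x)g(x)|\,dx.
\end{align*}
The key step is the substitution $h:=g\omega$. Since $\omega\in(0,\fz)$ almost everywhere, multiplication by $\omega$ is an almost-everywhere bijection on $\mathscr M(\rn)$ with inverse multiplication by $\omega^{-1}$; moreover, by Definition \ref{wvls}, $\|g\|_{L^{p(\cdot)}_{\omega}(\rn)}=\|h\|_{L^{p(\cdot)}(\rn)}$. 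Therefore the supremum over the unit sphere of $L^{p(\cdot)}_{\omega}(\rn)$ transforms into a supremum over the unit sphere of $L^{p(\cdot)}(\rn)$, and, writing $fg=(f\omega^{-1})h$, I obtain
\begin{align*}
\lf\|f\r\|_{(L^{p(\cdot)}_{\omega}(\rn))'}
=\sup_{\|h\|_{L^{p(\cdot)}(\rn)}=1}\int_{\rn}\lf|(f\omega^{-1})(x)h(x)\r|\,dx
=\lf\|f\omega^{-1}\r\|_{(L^{p(\cdot)}(\rn))'}.
\end{align*}

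Finally, because $p_-\ge1$, the conjugate exponent $p'(\cdot)$ is well defined pointwise with $p'(\cdot)\in[1,\fz]$, and the classical duality for variable Lebesgue spaces (see, for instance, \cite{cfbook}) yields $(L^{p(\cdot)}(\rn))'=L^{p'(\cdot)}(\rn)$ with equivalent norms. Combining this with the identity above and Definition \ref{wvls} gives
\begin{align*}
\lf\|f\r\|_{(L^{p(\cdot)}_{\omega}(\rn))'}
\sim\lf\|f\omega^{-1}\r\|_{L^{p'(\cdot)}(\rn)}
=\lf\|f\r\|_{L^{p'(\cdot)}_{\omega^{-1}}(\rn)},
\end{align*}
which is the desired conclusion.

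The main obstacle is not a deep one but a matter of careful bookkeeping: I must verify that the change of variable $g\mapsto g\omega$ is a genuine norm-preserving bijection between the unit spheres of $L^{p(\cdot)}_{\omega}(\rn)$ and $L^{p(\cdot)}(\rn)$, which is exactly where the hypothesis $\omega\in(0,\fz)$ almost everywhere is used, and that the hypotheses of the classical variable-exponent duality theorem are in force, which is precisely what $p_-\ge1$ secures (so that $p'(\cdot)$ is pointwise meaningful and the unweighted duality applies). All the weight-dependent content is concentrated in the single substitution step; the remainder reduces to the standard unweighted duality.
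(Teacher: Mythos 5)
Your proposal is correct. It differs from the paper's proof only in how the key equivalence is supplied: the paper, after invoking Lemma \ref{lx.x} and Definition \ref{associate space} to write
$\|f\|_{(L^{p(\cdot)}_{\omega}(\rn))'}=\sup_{\|g\|_{L^{p(\cdot)}_{\omega}(\rn)}=1}\|fg\|_{L^1(\rn)}$,
simply cites \cite[Proposition 2.2]{ho17} (a weighted H\"older inequality together with a norm-conjugate formula for $L^{p(\cdot)}_{\omega}(\rn)$) to conclude that this supremum is comparable to $\|f\|_{L^{p'(\cdot)}_{\omega^{-1}}(\rn)}$. You instead prove that step yourself: the substitution $h:=g\omega$, which is a norm-preserving bijection of unit spheres precisely because $\omega\in(0,\fz)$ almost everywhere, converts the weighted associate norm into the unweighted one evaluated at $f\omega^{-1}$, and then the classical norm-conjugate formula for $L^{p(\cdot)}(\rn)$ (valid since $p_-\ge1$ and $p_+<\fz$) finishes the argument. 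Your route is more self-contained and makes explicit where each hypothesis ($\omega>0$ a.e., $p_-\ge1$) enters, at the cost of a few extra lines; it is in effect the natural proof of the very proposition the paper cites, so the two arguments are mathematically the same equivalence reached through different bookkeeping.
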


\begin{proof}
Indeed, by Lemma \ref{lx.x}, Definition \ref{associate space},
and \cite[Proposition 2.2]{ho17}, we find that,
for any $f\in(L^{p(\cdot)}_{\omega}(\rn))'$,
\begin{align*}
\|f\|_{(L^{p(\cdot)}_{\omega}(\rn))'}
&=\sup_{g\in L^{p(\cdot)}_{\omega}(\rn),\
\|g\|_{L^{p(\cdot)}_{\omega}(\rn)}=1}\lf\|fg\r\|_{L^1(\rn)}
\sim\|f\|_{L^{p'(\cdot)}_{\omega^{-1}}(\rn)},
\end{align*}
where the implicit positive equivalence constant is independent of $f$.
\end{proof}

\subsection{Relations between Two Variable Muckenhoupt Weights\label{s2z}}

In this subsection, we introduce another definition
of the variable Muckenhoupt weight and clarify
the relationship between it and the weight in
Definition \ref{variable weight}. To this end,
we first recall the concept of the variable
Muckenhoupt weight from \cite[p.\,364]{cdh11}
(see also \cite[Definition 1.4]{cfn12}).

\begin{definition}\label{variable weight2}
Let $p(\cdot)\in\cp(\rn)$ satisfy $p_-\in[1,\fz)$ with
$p_-$ the same as in \eqref{2.1x} and $\omega$ be
a measurable function such that $\omega\in(0,\fz)$
almost everywhere in $\rn$. We say $\omega\in A_{p(\cdot)}(\rn)$
if there exists a positive constant $C$ such that,
for any ball $B\subset\rn$,
\begin{align}\label{2.1x3}
\lf\|\omega\ch1_{B}\r\|_{L^{p(\cdot)}(\rn)}
\lf\|\omega^{-1}\ch1_{B}\r\|_{L^{p'(\cdot)}(\rn)}\le C|B|.
\end{align}
\end{definition}

\begin{remark}\label{weight2rem}
\begin{enumerate}
\item[(i)] By Definition \ref{variable weight2},
we conclude that $\omega\in A_{p(\cdot)}(\rn)$ if and only if
$\omega^{-1}\in A_{p'(\cdot)}(\rn)$.

\item[(ii)]
If $p(\cdot)\equiv p\in[1,\fz)$,
then $\omega\in A_{p(\cdot)}(\rn)$ if and only if $\omega^p\in A_p(\rn)$.
This conclusion can be found in \cite[p.\,746]{cfn12}
(see also \cite[p.\,365]{cdh11}). But we still give some
details of its proof here for the sake of completeness.
Let $p(\cdot)\equiv p\in(1,\fz)$. By \eqref{2.1x3} and \eqref{21.7.14.x1},
we conclude that
\begin{align*}
\omega\in A_{p(\cdot)}(\rn)
&\Longleftrightarrow\sup_{B\subset\rn}\frac{1}{|B|}
\lf\|\omega\ch1_{B}\r\|_{L^{p}(\rn)}\lf\|\omega^{-1}\ch1_{B}\r\|_{L^{p'}(\rn)}\le C\\
&\Longleftrightarrow\sup_{B\subset\rn}\frac{1}{|B|}
\lf\{\int_B\lf[\omega(x)\r]^p\,dx\r\}^{1/p}
\lf\{\int_B\lf[\lf(\omega(y)\r)^p\r]^{1-p'}\,dy\r\}^{1/p'}\le C\\
&\Longleftrightarrow\sup_{B\subset\rn}\frac{1}{|B|^p}
\int_B\lf[\omega(x)\r]^p\,dx
\lf\{\int_B\lf[\lf(\omega(y)\r)^p\r]^{1-p'}\,dy\r\}^{p/p'}\le C^p\\
&\Longleftrightarrow\omega^p\in A_p(\rn).
\end{align*}
Some usual modifications are made when $p(\cdot)\equiv p=1$.
\end{enumerate}
\end{remark}

Via Definition \ref{variable weight2} and arguments similar to those
used in the proofs of Lemmas \ref{lx.x}, \ref{lx.y}, and \ref{1.17.x1},
we have the following result; and we omit the details of its proofs.

\begin{lemma}\label{lx.xx}
Let $p(\cdot)\in\cp(\rn)$ satisfy $p_-\in[1,\fz)$
with $p_-$ the same as in \eqref{2.1x}.
Then the conclusions of Lemmas \ref{lx.x}, \ref{lx.y},
and \ref{1.17.x1} still hold true with
$\omega\in\mathcal{W}_{p(\cdot)}(\rn)$
replaced by $\omega\in A_{p(\cdot)}(\rn)$.
\end{lemma}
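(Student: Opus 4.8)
The plan is to follow the strategy already laid out for $\mathcal{W}_{p(\cdot)}(\rn)$ in Lemmas \ref{lx.x}, \ref{lx.y}, and \ref{1.17.x1}, replacing the structural input from Definition \ref{variable weight} by the single inequality \eqref{2.1x3} of Definition \ref{variable weight2}. The key observation is that the three lemmas use only two consequences of $\omega\in\mathcal{W}_{p(\cdot)}(\rn)$: that $\|\ch1_B\|_{L^{p(\cdot)}_\omega(\rn)}<\fz$ and $\|\ch1_B\|_{L^{p'(\cdot)}_{\omega^{-1}}(\rn)}<\fz$ for every ball $B$ (used for properties (iv) and (vi) of Definition \ref{BQBFS}), and the identification of the associate space (used in Lemma \ref{1.17.x1}). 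So first I would verify that \eqref{2.1x3} delivers exactly these two finiteness conditions: since the product $\|\omega\ch1_B\|_{L^{p(\cdot)}(\rn)}\|\omega^{-1}\ch1_B\|_{L^{p'(\cdot)}(\rn)}\le C|B|<\fz$ forces each factor to be finite (each factor is positive because $\omega\in(0,\fz)$ a.e.\ and $B$ has positive measure), we obtain $\|\ch1_B\|_{L^{p(\cdot)}_\omega(\rn)}=\|\omega\ch1_B\|_{L^{p(\cdot)}(\rn)}<\fz$ and likewise $\|\ch1_B\|_{L^{p'(\cdot)}_{\omega^{-1}}(\rn)}=\|\omega^{-1}\ch1_B\|_{L^{p'(\cdot)}(\rn)}<\fz$.

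With these in hand, the analogue of Lemma \ref{lx.x} goes through verbatim. Properties (i)--(iii) of Definition \ref{BQBFS} depend only on $\omega$ being measurable and positive a.e.\ together with \cite[Theorems 2.17 and 2.59]{cfbook}, none of which uses the weight class. For property (iv) the same modular computation as in Lemma \ref{lx.x} reduces $\|\ch1_B\|_{L^{p(\cdot)}_\omega(\rn)}^{p_*}$ to $\|\omega^{p_*}\ch1_B\|_{L^{p(\cdot)/p_*}(\rn)}$, which is finite by the first consequence above. When $p_-\ge1$ we have $p_*=1$, property (v) is the triangle inequality for $L^{p(\cdot)}(\rn)$, and property (vi) follows from the generalized Hölder inequality for variable Lebesgue spaces exactly as written in Lemma \ref{lx.x}, now bounding $\int_B|f|\,dx\ls\|f\|_{L^{p(\cdot)}_\omega(\rn)}\|\omega^{-1}\ch1_B\|_{L^{p'(\cdot)}(\rn)}$ and invoking the second consequence for finiteness. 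The $\theta$-convexification identity (Lemma \ref{lx.y}) is a purely modular change of variables $\lambda\mapsto\lambda^{1/\tz}$ that never references the weight class, so it transfers unchanged.

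The one place deserving genuine attention is the associate-space identity of Lemma \ref{1.17.x1}, whose proof cites \cite[Proposition 2.2]{ho17}---a duality result stated for $\mathcal{W}_{p(\cdot)}(\rn)$. The task here is to confirm that the same duality $(L^{p(\cdot)}_\omega(\rn))'=L^{p'(\cdot)}_{\omega^{-1}}(\rn)$ holds under $\omega\in A_{p(\cdot)}(\rn)$. I expect this to be the main (and only real) obstacle. The cleanest route is to recall that the duality $(L^{p(\cdot)}(\rn))'=L^{p'(\cdot)}(\rn)$ holds for every $p(\cdot)\in\cp(\rn)$ with $p_-\ge1$, independently of any weight, and then transfer it through the multiplier $\omega$: writing $fg=(f\omega^{-1})(g\omega)$ and using that $g\mapsto g\omega$ is an isometry from $L^{p(\cdot)}_\omega(\rn)$ onto $L^{p(\cdot)}(\rn)$, one gets $\|f\|_{(L^{p(\cdot)}_\omega(\rn))'}=\sup_{\|h\|_{L^{p(\cdot)}(\rn)}=1}\|(f\omega^{-1})h\|_{L^1(\rn)}=\|f\omega^{-1}\|_{(L^{p(\cdot)}(\rn))'}\sim\|f\omega^{-1}\|_{L^{p'(\cdot)}(\rn)}=\|f\|_{L^{p'(\cdot)}_{\omega^{-1}}(\rn)}$. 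This argument uses only the positivity of $\omega$ and the unweighted duality, so it applies equally to $A_{p(\cdot)}(\rn)$; the membership conditions \eqref{2.1x3} are what guarantee that $L^{p(\cdot)}_\omega(\rn)$ is a ball Banach function space (so that the associate space is meaningful) rather than being needed for the identity itself. Since the stated lemma only asserts that the earlier conclusions persist, it suffices to point out these verbatim transfers together with this weight-independent duality argument, and the details may safely be omitted as the authors indicate.
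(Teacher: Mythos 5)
Your proposal is correct and follows exactly the route the paper intends: the paper omits the proof, asserting that the arguments of Lemmas \ref{lx.x}, \ref{lx.y}, and \ref{1.17.x1} carry over verbatim, and your verification that \eqref{2.1x3} forces both $\|\omega\ch1_B\|_{L^{p(\cdot)}(\rn)}<\infty$ and $\|\omega^{-1}\ch1_B\|_{L^{p'(\cdot)}(\rn)}<\infty$ (which, since $p_*=1$ here, is precisely Definition \ref{variable weight}(i)) is the key point making those arguments go through. Your one genuine addition---replacing the citation of \cite[Proposition 2.2]{ho17} (stated for $\mathcal{W}_{p(\cdot)}(\rn)$) by the weight-independent argument that $g\mapsto g\omega$ is an isometric bijection of $L^{p(\cdot)}_{\omega}(\rn)$ onto $L^{p(\cdot)}(\rn)$ combined with the unweighted norm-conjugate formula $(L^{p(\cdot)}(\rn))'=L^{p'(\cdot)}(\rn)$---correctly and cleanly handles the only step where a verbatim transfer could be questioned.
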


A function $p(\cdot)\in\cp(\rn)$ is said to satisfy the
\emph{globally log-H\"older continuous condition}, denoted by
$p(\cdot)\in C^{\log}(\rn)$, if there exist positive constants
$C_{\log}(p)$ and $C_\fz$, and $p_\fz\in\rr$ such that,
for any $x,y\in\rn$,
\begin{align*}
\lf|p(x)-p(y)\r|\le \frac{C_{\log}(p)}{\log(e+1/|x-y|)}
\end{align*}
and
\begin{align*}
\lf|p(x)-p_\fz\r|\le \frac{C_\fz}{\log(e+|x|)}.
\end{align*}

The following conclusion is the main result of this subsection.

\begin{proposition}\label{relation1}
Let $p(\cdot)\in\cp(\rn)$ satisfy $p_-\in(1,\fz)$ with $p_-$
the same as in \eqref{2.1x}, and $\omega$ be a measurable
function such that $\omega\in(0,\fz)$ almost everywhere in $\rn$.
If $p(\cdot)\in C^{\log}(\rn)$ and $\omega\in A_{p(\cdot)}(\rn)$,
then $\omega\in\mathcal{W}_{p(\cdot)}(\rn)$.
\end{proposition}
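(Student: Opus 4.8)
The plan is to verify the two defining conditions of $\mathcal{W}_{p(\cdot)}(\rn)$ in Definition \ref{variable weight} for the given weight $\omega$. Since $p_-\in(1,\fz)$, we have $p_*=\min\{p_-,1\}=1$, so condition (i) merely asks that, for every ball $B\st\rn$, both $\|\ch1_B\|_{L^{p(\cdot)}_{\omega}(\rn)}=\|\omega\ch1_B\|_{\lv}$ and $\|\ch1_B\|_{L^{p'(\cdot)}_{\omega^{-1}}(\rn)}=\|\omega^{-1}\ch1_B\|_{L^{p'(\cdot)}(\rn)}$ are finite. First I would dispose of (i): by Definition \ref{variable weight2} the product of these two quantities is at most $C|B|<\fz$; since $\omega\in(0,\fz)$ almost everywhere and $B$ has positive, finite measure, each factor is strictly positive, and hence each must be finite. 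Thus (i) holds with no further work.

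The substance of the proof is condition (ii): producing $\kappa,s\in(1,\fz)$ for which $\cm$ is bounded on $L^{(sp(\cdot))'/\kappa}_{\omega^{-\kappa/s}}(\rn)$. My strategy is to read this as a maximal-operator bound on a weighted variable Lebesgue space and to reduce it, via the characterization of Cruz--Uribe et al.\ \cite{cdh11,cfn12}, to a variable Muckenhoupt membership. Concretely, set $r(\cdot):=(sp(\cdot))'/\kappa$; I would first check that $r(\cdot)$ is admissible for that characterization whenever $s,\kappa$ are close to $1$: since $sp(\cdot)\in C^{\log}(\rn)$ is bounded away from $1$ (as $sp_->1$) and from $\fz$, its conjugate $(sp(\cdot))'$, and hence $r(\cdot)$, lies in $C^{\log}(\rn)$; moreover $r_+=\frac1\kappa\frac{sp_-}{sp_--1}<\fz$, while $r_->1$ holds precisely when $sp_+<\kappa'$, which is arranged by taking, say, $\kappa\in(1,(sp_+)')$. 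The characterization then reduces (ii) to showing $\omega^{-\kappa/s}\in A_{r(\cdot)}(\rn)$ for a suitable such pair $(s,\kappa)$.

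To attack this membership I would exploit the convexification identity $\||g|^a\|_{L^{q(\cdot)}(\rn)}=\|g\|_{L^{aq(\cdot)}(\rn)}^{a}$ (the unweighted form underlying Lemma \ref{lx.y}) to rewrite both factors of the $A_{r(\cdot)}$ constant in terms of powers of $\omega^{\pm1}$ at perturbed exponents: a short computation gives $\|\omega^{-\kappa/s}\ch1_B\|_{L^{r(\cdot)}(\rn)}=\|\omega^{-1}\ch1_B\|_{L^{(sp(\cdot))'/s}(\rn)}^{\kappa/s}$ and, analogously, $\|\omega^{\kappa/s}\ch1_B\|_{L^{r'(\cdot)}(\rn)}=\|\omega\ch1_B\|_{L^{\kappa r'(\cdot)/s}(\rn)}^{\kappa/s}$. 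As $(s,\kappa)\to(1,1)$ one has $(sp(\cdot))'/s\to p'(\cdot)$, $\kappa r'(\cdot)/s\to p(\cdot)$, and the outer powers $\kappa/s\to1$, so the product of the two factors converges to $\|\omega^{-1}\ch1_B\|_{L^{p'(\cdot)}(\rn)}\|\omega\ch1_B\|_{\lv}$, which is $\le C|B|$ by $\omega\in A_{p(\cdot)}(\rn)$. Thus at the formal endpoint $(s,\kappa)=(1,1)$ the desired $A_{r(\cdot)}$-bound is exactly the hypothesis, and the task becomes pushing this to genuine $s,\kappa>1$.

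This last step is the main obstacle: the endpoint is forbidden (we need $s,\kappa>1$ strictly), and the exponent change $p'(\cdot)\rightsquigarrow(sp(\cdot))'/s$ is not a mere dilation of $p'(\cdot)$, so I cannot reach the target by Lemma \ref{lx.y} alone. I would close the gap using the self-improvement (openness) of variable Muckenhoupt weights from \cite{cdh11,cfn12}: the reverse-H\"older phenomenon for $A_{p(\cdot)}$-weights makes membership stable under small, log-H\"older-controlled perturbations of the exponent together with small power perturbations of the weight, so that the uniform $A$-bound available at $(1,1)$ persists on a whole neighborhood and, in particular, at some pair with $s,\kappa>1$ (chosen also to satisfy $\kappa\in(1,(sp_+)')$ from the previous paragraph). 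Equivalently, one may phrase the perturbation directly at the level of the maximal operator, noting that $\omega^{-1}\in A_{p'(\cdot)}(\rn)$ by Remark \ref{weight2rem}(i) already yields boundedness of $\cm$ on $L^{p'(\cdot)}_{\omega^{-1}}(\rn)$, and that this boundedness is an open condition in the exponent and the weight. Making this stability quantitative---exhibiting an explicit neighborhood of $(1,1)$ on which the $A_{r(\cdot)}$-constant stays finite---is where the real work lies, and it is the only place where both $p(\cdot)\in C^{\log}(\rn)$ and the full strength of $\omega\in A_{p(\cdot)}(\rn)$ are genuinely used.
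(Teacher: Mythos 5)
Your treatment of condition (i) is correct and matches the paper's: since $p_*=1$, the finiteness of both norms follows directly from \eqref{2.1x3} and the positivity of each factor. The gap is in condition (ii). You correctly reduce it, via Lemma \ref{embedlem3}, to showing $\omega^{-\kappa/s}\in A_{(sp(\cdot))'/\kappa}(\rn)$ for some $s,\kappa\in(1,\fz)$, and you correctly observe that the formal endpoint $(s,\kappa)=(1,1)$ is exactly the hypothesis $\omega\in A_{p(\cdot)}(\rn)$. But the step from the endpoint to genuine $s,\kappa>1$ is carried entirely by an appeal to a ``self-improvement (openness) of variable Muckenhoupt weights'' under simultaneous perturbation of the exponent function and of the power of the weight. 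No such result is available in \cite{cdh11} or \cite{cfn12} (nor anywhere the paper cites): the reverse-H\"older machinery that gives openness of $A_p$ in the constant-exponent case does not transfer to $A_{p(\cdot)}$ in this joint, quantitative form, and you yourself concede that making it precise ``is where the real work lies.'' As written, the proposal therefore assumes the crux rather than proving it.

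The paper closes exactly this gap by never perturbing $A_{r(\cdot)}$ constants at all; it works at the operator level with three citable tools. First, Lemma \ref{embedlem3} converts $\omega\in A_{p(\cdot)}(\rn)$ into boundedness of $\cm$ on $L^{p(\cdot)}_{\omega}(\rn)$. Second, Lemma \ref{embedlem2} (boundedness of $\cm$ on a ball Banach function space $X$ passes to every convexification $X^{\eta}$, $\eta\in[1,\fz)$ --- a structural fact, not a small-perturbation statement) combined with Lemma \ref{lx.y} gives boundedness of $\cm$ on $L^{\eta_1 p(\cdot)}_{\omega^{1/\eta_1}}(\rn)$ for an arbitrary $\eta_1\in(1,\fz)$; dualizing via Remark \ref{lem3rem} then yields boundedness of $\cm$ on $L^{(\eta_1 p(\cdot))'}_{\omega^{-1/\eta_1}}(\rn)$. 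Third, Lemma \ref{embedlem1} (the self-improvement of \cite{shyy17}: if $\cm$ is bounded on $X$ then $\cm^{(\eta_2)}$ is bounded on $X$ for some $\eta_2\in(1,\fz)$) supplies the inner exponent, and unraveling $\cm^{(\eta_2)}$ through the $1/\eta_2$-convexification shows $\cm$ is bounded on $L^{(\eta_1 p(\cdot))'/\eta_2}_{\omega^{-\eta_2/\eta_1}}(\rn)$, which is Definition \ref{variable weight}(ii) with $s=\eta_1$, $\kappa=\eta_2$. In short, the ``openness in the weight power'' you need is Lemma \ref{embedlem2} plus duality, and the ``openness in $\kappa$'' is Lemma \ref{embedlem1}; replacing your unproven stability claim by these two lemmas (in this order: convexify up, dualize, self-improve down) turns your sketch into the paper's proof.
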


To prove Proposition \ref{relation1}, we need more preparations.
For any given $\theta\in(0,{\infty})$, the \emph{powered Hardy--Littlewood
maximal operator} ${\mathcal M}^{({\theta})}$ is defined by setting, for any
$f\in L_{\rm loc}^1({{\RR}^n})$ and $x\in{{\RR}^n}$,
\begin{align*}
{\mathcal M}^{({\theta})}(f)(x):=\left\{{\mathcal M}\left(|f|^{{\theta}}\right)(x)\right\}^{\frac{1}{{\theta}}}.
\end{align*}
The following conclusion comes from \cite[Lemma 2.15(ii)]{shyy17}.

\begin{lemma}\label{embedlem1}
Assume that $X$ is a ball quasi-Banach function space on $\rn$
and $\cm$ bounded on $X$. Then there exists an $\eta\in(1,\fz)$
such that $\cm^{(\eta)}$ is bounded on $X$.
\end{lemma}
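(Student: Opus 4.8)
The plan is to realize $\cm^{(\eta)}$ through the Rubio de Francia iteration together with the quantitative reverse Hölder inequality for $A_1(\rn)$ weights, the point being that the \emph{strict} boundedness of $\cm$ on $X$ (finite operator norm) is upgraded, via reverse Hölder, into an \emph{open} exponent $\eta>1$. Since $\cm^{(\eta)}(f)=\cm^{(\eta)}(|f|)$ and $\||f|\|_X=\|f\|_X$, it suffices to prove $\|\cm^{(\eta)}(h)\|_X\ls\|h\|_X$ for nonnegative $h\in X$.

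First I would run the Rubio de Francia algorithm. Set $c:=\|\cm\|_{X\to X}<\fz$ and, for $0\le h\in X$, define $\mathcal{R}h:=\sum_{k=0}^{\fz}(2c)^{-k}\cm^{k}h$, where $\cm^{k}$ is the $k$-fold iterate of $\cm$. Summing the geometric estimate $\|\cm^{k}h\|_X\le c^{k}\|h\|_X$ (in the quasi-Banach setting via the Aoki--Rolewicz $r$-norm of $X$ for some $r\in(0,1]$) and invoking Definition \ref{BQBFS}(iii), I would check that $\mathcal{R}h\in X$ with $\|\mathcal{R}h\|_X\ls\|h\|_X$ uniformly in $h$, that $\mathcal{R}h<\fz$ almost everywhere with $h\le\mathcal{R}h$ (the $k=0$ term), and that $\cm(\mathcal{R}h)\le 2c\,\mathcal{R}h$, i.e. $\mathcal{R}h\in A_1(\rn)$ with $[\mathcal{R}h]_{A_1(\rn)}\le 2c$ uniformly in $h$.

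Next I would invoke the quantitative reverse Hölder inequality for $A_1(\rn)$ weights: there exist $\eta\in(1,\fz)$ and a positive constant $C$, both depending only on $n$ and $c$, such that, writing $w:=\mathcal{R}h$, for every ball $B$ one has $(\frac{1}{|B|}\int_B w^{\eta})^{1/\eta}\le\frac{C}{|B|}\int_B w$. Combining this with $\frac{1}{|B|}\int_B w\le[w]_{A_1(\rn)}\einf_{y\in B}w(y)\le 2c\,w(x)$ for almost every $x\in B$ yields $\cm(w^{\eta})\le(2cC)^{\eta}w^{\eta}$ almost everywhere. Since $h\le w$ and $\cm$ is monotone, $\cm(h^{\eta})\le\cm(w^{\eta})\le(2cC)^{\eta}w^{\eta}$, so $\cm^{(\eta)}(h)=[\cm(h^{\eta})]^{1/\eta}\le 2cC\,\mathcal{R}h$ pointwise; taking the $X$-quasi-norm and using Definition \ref{BQBFS}(ii) with the bound $\|\mathcal{R}h\|_X\ls\|h\|_X$ gives $\|\cm^{(\eta)}(h)\|_X\ls\|h\|_X$ for exactly this $\eta$.

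The hard part will be the reverse Hölder step, and specifically the \emph{uniformity} of the exponent $\eta>1$ and the constant $C$ across the entire family $\{\mathcal{R}h:\ 0\le h\in X\}$; this is precisely what the quantitative control $[\mathcal{R}h]_{A_1(\rn)}\le 2c$ secures, and it is the sole place where finiteness of $c$ is converted into a genuine gain. A secondary technical obstacle is justifying the convergence of $\mathcal{R}h$ and its $A_1(\rn)$ bound in the merely quasi-Banach setting, which I would dispatch through the Aoki--Rolewicz $r$-norm and the Fatou-type property in Definition \ref{BQBFS}(iii).
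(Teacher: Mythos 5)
Your proposal is correct. Note that the paper itself contains no proof of this lemma: it is imported verbatim from \cite[Lemma 2.15(ii)]{shyy17}, and the argument given there is essentially the one you describe --- the Rubio de Francia iteration $\mathcal{R}h:=\sum_{k=0}^{\fz}(2\|\cm\|_{X\to X})^{-k}\cm^{k}h$ producing an $A_1(\rn)$ majorant with uniformly controlled constant, followed by the quantitative reverse H\"older inequality to extract a uniform exponent $\eta\in(1,\fz)$, with the Aoki--Rolewicz $r$-norm and Definition \ref{BQBFS}(iii) handling convergence in the quasi-Banach setting. Your treatment of the two delicate points (uniformity of the reverse H\"older exponent over the family $\{\mathcal{R}h\}$, and local integrability of $\mathcal{R}h$, which follows from $\cm(\mathcal{R}h)\le 2c\,\mathcal{R}h<\fz$ almost everywhere) is sound, so there is nothing to correct.
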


The following lemma is just \cite[Lemma 2.9]{cwyz19}.

\begin{lemma}\label{embedlem2}
Let $X$ be a ball Banach function space and $\eta\in[1,\fz)$.
If $\cm$ is bounded on $X$, then $\cm$ is also bounded on $X^{\eta}$.
\end{lemma}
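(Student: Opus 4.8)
The plan is to reduce the claimed boundedness of $\cm$ on $X^{\eta}$ to the assumed boundedness on $X$ by combining the convexification quasi-norm with a single pointwise comparison. By Definition \ref{Debf}, for any $f\in X^{\eta}$ one has $\|f\|_{X^{\eta}}=\||f|^{\eta}\|_X^{1/\eta}$, and since $\cm f\ge0$, also $\|\cm f\|_{X^{\eta}}=\|(\cm f)^{\eta}\|_X^{1/\eta}$. Hence, after raising to the power $\eta$, the target inequality $\|\cm f\|_{X^{\eta}}\ls\|f\|_{X^{\eta}}$ is equivalent to the estimate $\|(\cm f)^{\eta}\|_X\ls\||f|^{\eta}\|_X$, which is what I would aim to prove.

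The key step is the pointwise inequality $(\cm f)^{\eta}\le\cm(|f|^{\eta})$, valid precisely because $\eta\ge1$. Indeed, $\cm f=\cm(|f|)$, and for every ball $B\ni x$ the convexity of $t\mapsto t^{\eta}$ on $[0,\fz)$ (Jensen's inequality with respect to the normalized measure $|B|^{-1}\,dy$ on $B$) gives
\[
\lf(\frac1{|B|}\int_B|f(y)|\,dy\r)^{\eta}\le\frac1{|B|}\int_B|f(y)|^{\eta}\,dy\le\cm\lf(|f|^{\eta}\r)(x).
\]
Taking the supremum over all balls $B\ni x$ and using that $t\mapsto t^{\eta}$ is nondecreasing, so that the supremum commutes with this power, I obtain $(\cm f(x))^{\eta}\le\cm(|f|^{\eta})(x)$ for almost every $x\in\rn$.

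It then remains to feed this into the two structural inputs. Since $0\le(\cm f)^{\eta}\le\cm(|f|^{\eta})$ almost everywhere, the lattice property of $X$ (Definition \ref{BQBFS}(ii)) yields $\|(\cm f)^{\eta}\|_X\le\|\cm(|f|^{\eta})\|_X$; and since $f\in X^{\eta}$ means exactly $|f|^{\eta}\in X$, the boundedness of $\cm$ on $X$ gives $\|\cm(|f|^{\eta})\|_X\ls\||f|^{\eta}\|_X$. Chaining these two bounds produces $\|(\cm f)^{\eta}\|_X\ls\||f|^{\eta}\|_X$, which by the first paragraph is the desired conclusion, with quantitative control $\|\cm\|_{X^{\eta}\to X^{\eta}}\le\|\cm\|_{X\to X}^{1/\eta}$.

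There is no serious obstacle here: the whole argument rests on the elementary bound $(\cm f)^{\eta}\le\cm(|f|^{\eta})$ and on the monotonicity axiom (Definition \ref{BQBFS}(ii)) together with the assumed boundedness of $\cm$ on $X$, so the ball Banach hypothesis is really used only through that axiom. The one point deserving genuine care is the role of the assumption $\eta\ge1$: convexity of $t\mapsto t^{\eta}$ is exactly what makes the Jensen step valid, and for $\eta\in(0,1)$ the inequality reverses. Consequently the analogous statement for the \emph{de-convexifications} $X^{\eta}$ with $\eta<1$ is genuinely harder and cannot be obtained by this direct comparison; it is instead the content of the self-improvement in Lemma \ref{embedlem1}.
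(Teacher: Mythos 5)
Your proof is correct and complete: the Jensen/H\"older pointwise bound $(\cm f)^{\eta}\le\cm(|f|^{\eta})$ for $\eta\ge1$, fed through the lattice axiom (Definition \ref{BQBFS}(ii)) and the assumed boundedness of $\cm$ on $X$, yields $\|\cm f\|_{X^{\eta}}\le\|\cm\|_{X\to X}^{1/\eta}\|f\|_{X^{\eta}}$ exactly as you wrote. The paper itself gives no proof (it simply cites \cite[Lemma 2.9]{cwyz19}), and your argument is essentially the standard one used there, so there is nothing to add.
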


The following conclusion was obtained in \cite[Theorem 1.5]{cfn12}
(see also \cite[Theorem 1.3]{cdh11}).

\begin{lemma}\label{embedlem3}
Let $p(\cdot)\in C^{\log}(\rn)$ satisfy $p_-\in(1,\fz)$
with $p_-$ the same as in \eqref{2.1x}.
Then $\cm$ is bounded on $L^{p(\cdot)}_{\omega}(\rn)$
if and only if $\omega\in A_{p(\cdot)}(\rn)$.
\end{lemma}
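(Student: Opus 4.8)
The plan is to verify the two defining conditions of $\mathcal{W}_{p(\cdot)}(\rn)$ in Definition \ref{variable weight} separately, exploiting that $p_-\in(1,\fz)$ forces $p_*=\min\{p_-,1\}=1$. For condition (i), with $p_*=1$ I must show $\|\omega\ch1_B\|_{L^{p(\cdot)}(\rn)}<\fz$ and $\|\omega^{-1}\ch1_B\|_{L^{p'(\cdot)}(\rn)}<\fz$ for every ball $B$. Both follow at once from $\omega\in A_{p(\cdot)}(\rn)$: the defining inequality \eqref{2.1x3} bounds the \emph{product} of these two norms by $C|B|<\fz$, while the assumption $\omega\in(0,\fz)$ almost everywhere together with $|B|>0$ makes each factor strictly positive; hence neither factor can be infinite, which is exactly condition (i).

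The heart of the matter is condition (ii): I must exhibit $\kappa,s\in(1,\fz)$ with $\cm$ bounded on $L^{(sp(\cdot))'/\kappa}_{\omega^{-\kappa/s}}(\rn)$. I would begin from Lemma \ref{embedlem3}: since $p(\cdot)\in C^{\log}(\rn)$, $p_->1$, and $\omega\in A_{p(\cdot)}(\rn)$, the operator $\cm$ is bounded on $X:=L^{p(\cdot)}_{\omega}(\rn)$, which is a ball Banach function space by Lemma \ref{lx.xx}. Now fix any $s\in(1,\fz)$. By the convexification identity (the $A_{p(\cdot)}$ version of Lemma \ref{lx.y} furnished by Lemma \ref{lx.xx}) one has $X^{s}=L^{sp(\cdot)}_{\omega^{1/s}}(\rn)$, and Lemma \ref{embedlem2} yields that $\cm$ is bounded on $X^{s}$. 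Since $sp(\cdot)\in C^{\log}(\rn)$ (scaling by a positive constant preserves log-H\"older continuity) and $sp_->1$, Lemma \ref{embedlem3} in the reverse direction converts this boundedness into the membership $\omega^{1/s}\in A_{sp(\cdot)}(\rn)$; Remark \ref{weight2rem}(i) then gives $\omega^{-1/s}\in A_{(sp(\cdot))'}(\rn)$.

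To finish, I would use the standard fact that $(sp(\cdot))'\in C^{\log}(\rn)$ with $((sp(\cdot))')_-=\frac{sp_+}{sp_+-1}>1$ (conjugation preserves log-H\"older continuity and the lower bound stays above $1$ because $sp_+<\fz$), so that Lemma \ref{embedlem3} applies once more and shows that $\cm$ is bounded on $W:=L^{(sp(\cdot))'}_{\omega^{-1/s}}(\rn)$, again a ball Banach function space. Invoking Lemma \ref{embedlem1} on $W$ produces $\kappa\in(1,\fz)$ such that $\cm^{(\kappa)}$ is bounded on $W$. Writing $\cm^{(\kappa)}(f)=[\cm(|f|^{\kappa})]^{1/\kappa}$, substituting $f=g^{1/\kappa}$, and using the convexification identity $W^{1/\kappa}=L^{(sp(\cdot))'/\kappa}_{\omega^{-\kappa/s}}(\rn)$ (the $A$-version of Lemma \ref{lx.y}) turns this powered bound into the plain boundedness of $\cm$ on $L^{(sp(\cdot))'/\kappa}_{\omega^{-\kappa/s}}(\rn)$, which is precisely condition (ii). Combining (i) and (ii) gives $\omega\in\mathcal{W}_{p(\cdot)}(\rn)$.

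I expect the main obstacle to be the \emph{duality step} that transfers the maximal bound from $L^{sp(\cdot)}_{\omega^{1/s}}(\rn)$ to the conjugate-exponent space $L^{(sp(\cdot))'}_{\omega^{-1/s}}(\rn)$: boundedness of $\cm$ on a ball Banach function space does \emph{not} in general pass to its associate space, so this transfer depends essentially on Lemma \ref{embedlem3} being an equivalence together with the weight symmetry in Remark \ref{weight2rem}(i). The subsidiary technical points are the preservation of the class $C^{\log}(\rn)$ and of the lower bound $>1$ under constant scaling and under conjugation, and the careful tracking of exponents and weight powers through the convexifications $X\mapsto X^{s}$ and $W\mapsto W^{1/\kappa}$; these are routine but must be reconciled exactly with the coupled pair $(sp(\cdot))'/\kappa$ and $\omega^{-\kappa/s}$ appearing in Definition \ref{variable weight}(ii).
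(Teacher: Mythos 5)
There is a genuine gap, and it is structural: you have not proved the statement at all. The statement to be proved is Lemma \ref{embedlem3} itself, the equivalence ``$\cm$ is bounded on $L^{p(\cdot)}_{\omega}(\rn)$ if and only if $\omega\in A_{p(\cdot)}(\rn)$'' under $p(\cdot)\in C^{\log}(\rn)$ and $p_-\in(1,\fz)$. What you actually prove is the implication $\omega\in A_{p(\cdot)}(\rn)\Rightarrow\omega\in\mathcal{W}_{p(\cdot)}(\rn)$ by verifying conditions (i) and (ii) of Definition \ref{variable weight} --- that is, you have reconstructed the paper's Proposition \ref{relation1}, a different result. Worse, your argument is circular with respect to the stated lemma: you explicitly invoke Lemma \ref{embedlem3} three times (once to get boundedness of $\cm$ on $L^{p(\cdot)}_{\omega}(\rn)$ from $\omega\in A_{p(\cdot)}(\rn)$, once ``in the reverse direction'' on $L^{sp(\cdot)}_{\omega^{1/s}}(\rn)$ to conclude $\omega^{1/s}\in A_{sp(\cdot)}(\rn)$, and once more on the conjugate space $L^{(sp(\cdot))'}_{\omega^{-1/s}}(\rn)$). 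A proof cannot take its own conclusion as an ingredient.

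For context: the paper does not prove Lemma \ref{embedlem3} either; it quotes it from Cruz-Uribe, Fiorenza and Neugebauer \cite[Theorem 1.5]{cfn12} (see also \cite[Theorem 1.3]{cdh11}). An actual proof of this equivalence requires the weighted variable-exponent maximal function machinery of those papers (log-H\"older estimates, averaging/Calder\'on--Zygmund-type decompositions adapted to the $A_{p(\cdot)}$ condition, and a reverse-factorization argument for necessity), none of which appears in your proposal; the convexification lemmas \ref{lx.y}, \ref{embedlem1}, \ref{embedlem2} and the duality symmetry of Remark \ref{weight2rem}(i) that you deploy cannot substitute for it. Incidentally, read as a blind proof of Proposition \ref{relation1} your argument would be essentially sound and close to the paper's: the one divergence is that you transfer the maximal bound to the conjugate space by applying the (unproved) equivalence in both directions together with Remark \ref{weight2rem}(i), whereas the paper does this transfer via Remark \ref{lem3rem}, which rests on the duality result \cite[Remark 4.1.5]{dhr11}; your route is legitimate but leans even more heavily on the lemma you were asked to prove.
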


\begin{remark}\label{lem3rem}
Let $p(\cdot)\in C^{\log}(\rn)$ satisfy $p_-\in(1,\fz)$
with $p_-$ the same as in \eqref{2.1x}.
By Remark \ref{weight2rem}(i) and \cite[Remark 4.1.5]{dhr11},
we find that $\cm$ is bounded on $L^{p(\cdot)}_{\omega}(\rn)$
if and only if $\cm$ is bounded on
$L^{p'(\cdot)}_{\omega^{-1}}(\rn)$.
\end{remark}

Next, we prove Proposition \ref{relation1}.

\begin{proof}[Proof of Proposition \ref{relation1}]
On the one hand, since $p_-\in(1,\fz)$, it follows that $p_*=1$.
Moreover, by $\omega\in A_{p(\cdot)}(\rn)$ and Definition \ref{variable weight2},
we find that, for any $B\in\rn$,
$$\|\ch1_B\|_{L^{p(\cdot)/p_*}_{\omega^{p_*}}(\rn)}
=\|\ch1_B\|_{L^{p(\cdot)}_{\omega}(\rn)}
<\fz\quad\mbox{and}\quad
\|\ch1_B\|_{L^{(p(\cdot)/p_*)'}_{\omega^{-p_*}}(\rn)}
=\|\ch1_B\|_{L^{p'(\cdot)}_{\omega^{-1}}(\rn)}<\fz,$$
which implies Definition \ref{variable weight}(i).

On the other hand, from $p(\cdot)\in C^{\log}(\rn)$,
$\omega\in A_{p(\cdot)}(\rn)$, and Lemma \ref{embedlem3},
we deduce that $\cm$ is bounded on $L^{p(\cdot)}_{\omega}(\rn)$,
which, combined with Lemmas \ref{lx.xx} and \ref{embedlem2},
implies that there exists a constant $\eta_1\in(1,\fz)$
such that $\cm$ is bounded on
$(L^{p(\cdot)}_{\omega}(\rn))^{\eta_1}
=L^{\eta_1p(\cdot)}_{\omega^{1/\eta_1}}(\rn)$.
This, together with Remark \ref{lem3rem},
further shows that $\cm$ is bounded on
$L^{(\eta_1p(\cdot))'}_{\omega^{-1/\eta_1}}(\rn)$.
By this and Lemma \ref{embedlem1},
we conclude that there exists a constant $\eta_2\in(1,\fz)$
such that $\cm^{(\eta_2)}$ is bounded on $L^{(\eta_1p(\cdot))'}_{\omega^{-1/\eta_1}}(\rn)$
and hence, for any
$f\in L^{(\eta_1p(\cdot))'/\eta_2}_{\omega^{-\eta_2/\eta_1}}(\rn)$,
\begin{align*}
\lf\|\cm(f)\r\|_{L^{(\eta_1p(\cdot))'/\eta_2}_{\omega^{-\eta_2/\eta_1}}(\rn)}
&=\lf\|\cm(f)\r\|_{(L^{(\eta_1p(\cdot))'}_{\omega^{-1/\eta_1}}(\rn))^{1/\eta_2}}
=\lf\|\lf[\cm(f)\r]^{1/\eta_2}\r\|
_{L^{(\eta_1p(\cdot))'}_{\omega^{-1/\eta_1}}(\rn)}^{\eta_2}\\
&=\lf\|\cm^{(\eta_2)}\lf(|f|^{1/\eta_2}\r)\r\|
_{L^{(\eta_1p(\cdot))'}_{\omega^{-1/\eta_1}}(\rn)}^{\eta_2}
\ls\lf\||f|^{1/\eta_2}\r\|
_{L^{(\eta_1p(\cdot))'}_{\omega^{-1/\eta_1}}(\rn)}^{\eta_2}\\
&=\|f\|_{(L^{(\eta_1p(\cdot))'}_{\omega^{-1/\eta_1}}(\rn))^{1/\eta_2}}
=\|f\|_{L^{(\eta_1p(\cdot))'/\eta_2}_{\omega^{-\eta_2/\eta_1}}(\rn)},
\end{align*}
which implies Definition \ref{variable weight}(ii) with
$\kappa$ and $s$ therein replaced, respectively, by $\eta_2$ and $\eta_1$ here.
This completes the proof of Proposition \ref{relation1}.
\end{proof}

\begin{remark}\label{relation1rem}
\begin{enumerate}
\item[(i)] We point out that the proof of Proposition \ref{relation1}
depends heavily on the boundedness of $\cm$ on $L^{p(\cdot)}_{\omega}(\rn)$.
Hence $p(\cdot)\in C^{\log}(\rn)$ is not necessary in Proposition \ref{relation1}.
Indeed, $p(\cdot)\in C^{\log}(\rn)$ is not necessary for the boundedness
of $\cm$ on $L^{p(\cdot)}_{\omega}(\rn)$.
Fix $p_\fz\in(1,\fz)$ and define
\begin{align*}
\phi(x):=
\begin{cases}
\frac1k-|e^{k^2}-x|\ \ &\text{if}\ 0\le|e^{k^2}-x|\le\frac1k,\ 1\le k<\fz,\\
0\ \ &\text{otherwise}.
\end{cases}
\end{align*}
For any given $x\in\rn$, let $p(x):=\phi(x)+p_\fz$ and $\omega(x)\equiv1$.
Then, by \cite[Proposition 4.9, Definition 4.45 and Theorem 4.52]{cfbook},
we conclude that $p(\cdot)\notin C^{\log}(\rn)$ and $\cm$ is bounded on $L^{p(\cdot)}_{\omega}(\rn)$. From this and an argument similar to that
used in the proof of Proposition \ref{relation1},
we deduce that $\omega\in\mathcal{W}_{p(\cdot)}(\rn)$.
Recall that Cruz-Uribe and Wang in \cite[Remark 2.5]{cw17} conjectured that,
if $\cm$ is bounded on $L^{p(\cdot)}(\rn)$ and $\omega\in A_{p(\cdot)}(\rn)$,
then $\cm$ is bounded on $L^{p(\cdot)}_{\omega}(\rn)$,
which is still open. Therefore, what conditions can be weakened
in Proposition \ref{relation1} remains unknown.

\item[(ii)]
We initially wanted to prove the \emph{conclusion} that:
if $p(\cdot)\in C^{\log}(\rn)$ and
$\omega^{p_*}\in A_{p(\cdot)/p_*}(\rn)$
with $p_*$ the same as in \eqref{2.28.x1},
then $\omega\in\mathcal{W}_{p(\cdot)}(\rn)$.
This generalizes Proposition \ref{relation1}
by removing the limitation $p_-\in(1,\fz)$.
Indeed, when $p(\cdot)\equiv p\in(0,\fz)$,
the conclusion is easily proved.
Recall that, for any given $p\in(0,\fz)$ and $\omega\in A_{\fz}(\rn)$,
the \emph{weighted Lebesgue space $L^{p}(\omega,\rn)$} is defined to
be the set of all measurable functions $f$ on $\rn$ such that
$$\|f\|_{L^{p}(\omega,\rn)}:=
\lf\{\int_{\rn}|f(x)|^p\omega(x)\,dx\r\}^{1/p}<\infty.$$
Let $p(\cdot)\equiv p\in(0,\fz)$ and $\omega$ be a measurable
function such that $\omega\in(0,\fz)$ almost everywhere in $\rn$.
If $p\in[1,\fz)$, then, by Remarks \ref{weight2rem}(ii)
and \ref{vwrem}, we conclude that
\begin{align*}
\omega^{p_*}\in A_{p(\cdot)/p_*}(\rn)&\Longleftrightarrow
\omega^p\in A_p(\rn)\Longrightarrow\omega^p\in A_{\fz}(\rn)
\Longleftrightarrow\omega\in\mathcal{W}_{p(\cdot)}(\rn).
\end{align*}
If $p\in(0,1)$, then, from Remark \ref{weight2rem}(ii),
\cite[Propositions 7.2 and 7.6(1)]{duo}, and \cite[Theorem 7.3]{duo},
we deduce that there exist $\kappa\in(1,\fz)$ and $s\in(1/p,\fz)$ such that
\begin{align*}
\omega^{p_*}\in A_{p(\cdot)/p_*}(\rn)
&\Longleftrightarrow\omega^p\in A_1(\rn)\Longrightarrow\omega^p\in A_{sp}(\rn)\\
&\Longleftrightarrow\omega^{p[1-(sp)']}\in A_{(sp)'}(\rn)\noz\\
&\Longrightarrow\omega^{p[1-(sp)']}\in A_{(sp)'/\kappa}(\rn)\noz\\
&\Longleftrightarrow\cm\ \mbox{is bounded on}\
L^{(sp)'/\kappa}(\omega^{p[1-(sp)']},\rn)\noz\\
&\Longleftrightarrow\cm\ \mbox{is bounded on}\
L^{(sp)'/\kappa}(\omega^{-\kappa/s\cdot(sp)'/\kappa},\rn)\noz\\
&\Longleftrightarrow\cm\ \mbox{is bounded on}\
L^{(sp(\cdot))'/\kappa}_{\omega^{-\kappa/s}}(\rn)\noz\\
&\Longleftrightarrow\omega\in\mathcal{W}_{p(\cdot)}(\rn).\noz
\end{align*}
Recall that Cruz-Uribe and Wang in \cite[Theorem 2.4]{cw17} proved that the
boundedness of $\cm$ on $L^{p(\cdot)}_{\omega}(\rn)$ implies $p_->1$,
which makes it impossible to prove the conclusion with an argument
similar to the proof of Proposition \ref{relation1}.
Thus, whether the conclusion is right or not is still unknown.

\item[(iii)]
Denote by $\mathcal{A}_{p(\cdot)}(\rn)$ the set of all measurable functions
$\omega\in(0,\fz)$ almost everywhere in $\rn$ satisfying that,
for any families $\mathfrak{Q}$ of disjoint cubes and any
$f\in L^{p(\cdot)}_{\omega}(\rn)$,
\begin{align*}
\lf\|\sum_{Q\in\mathfrak{Q}}\ch1_Q\frac{1}{|Q|}\int_Q|f(y)|\,dy\r\|
_{L^{p(\cdot)}_{\omega}(\rn)}
\le C\|f\|_{L^{p(\cdot)}_{\omega}(\rn)},
\end{align*}
where $C$ is a positive constant independent of $f$.
Recall that Ho \cite[Proposition 3.7]{ho19} proved that,
if $p(\cdot)\in C^{\log}(\rn)$ and
$\omega^{p_*}\in\mathcal{A}_{p(\cdot)/p_*}(\rn)$,
then $\omega\in\mathcal{W}_{p(\cdot)}(\rn)$.
Indeed, by \cite[Theorem 4.5.7 and Remark 4.5.8]{dhr11}
(see also \cite[p.\,367]{cdh11}),
we conclude that $\omega\in\mathcal{A}_{p(\cdot)}(\rn)$
implies that $\omega\in A_{p(\cdot)}(\rn)$.
Thus, Proposition \ref{relation1} improves the corresponding
results in \cite[Proposition 3.7]{ho19} with $p_-\in(1,\fz)$
by weakening the condition $\omega\in\mathcal{A}_{p(\cdot)}(\rn)$
into $\omega\in A_{p(\cdot)}(\rn)$.

\item[(iv)]
Let $\omega\equiv1$.
If $p(\cdot)\in C^{\log}(\rn)$ satisfies $p_-\in(1,\fz)$
with $p_-$ the same as in \eqref{2.1x}, then,
by \cite[Remark 2.40, Proposition 2.3, and Theorem 3.16]{cfbook},
we conclude that, for any ball $B\subset\rn$,
$$\|\ch1_B\|_{L^{p(\cdot)/p_*}_{\omega^{p_*}}(\rn)}<\fz\quad
\mbox{and}\quad\|\ch1_B\|_{L^{(p(\cdot)/p_*)'}_{\omega^{-p_*}}(\rn)}<\fz,$$
and that there exist $s\in(1,\fz)$ and
$\kappa\in(1,(sp_+)')$ such that $\cm$ is bounded on
$L^{(sp(\cdot))'/\kappa}(\rn)$. This coincides with the
conclusion of Proposition \ref{relation1} with $\omega\equiv1$ therein.
\end{enumerate}
\end{remark}

\begin{proposition}\label{relation3}
Let $p(\cdot)\in\cp(\rn)$ satisfy $p_-\in[1,\fz)$.
If $\omega\in\mathcal{W}_{p(\cdot)}(\rn)$,
then there exists a constant $s\in(1,\fz)$
such that $\omega^{1/s}\in A_{sp(\cdot)}(\rn)$.
\end{proposition}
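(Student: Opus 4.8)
The plan is to unwind the definition of $\mathcal{W}_{p(\cdot)}(\rn)$ into a statement about the boundedness of $\cm$, and then to transfer that boundedness back across a convexification into an $A_{sp(\cdot)}(\rn)$ condition via Lemma \ref{embedlem3}. First I would use Definition \ref{variable weight}(ii), which gives $\kappa,s\in(1,\fz)$ such that $\cm$ is bounded on $L^{(sp(\cdot))'/\kappa}_{\omega^{-\kappa/s}}(\rn)$. I would then view this space as a convexification: by Lemma \ref{lx.y} (applied with the exponent $\kappa$), one has $L^{(sp(\cdot))'/\kappa}_{\omega^{-\kappa/s}}(\rn)=(L^{(sp(\cdot))'}_{\omega^{-1/s}}(\rn))^{1/\kappa}$, so the hypothesis says precisely that $\cm$ is bounded on the $(1/\kappa)$-convexification of $L^{(sp(\cdot))'}_{\omega^{-1/s}}(\rn)$.

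The key step is to pass from boundedness of $\cm$ on this convexification down to boundedness on $L^{(sp(\cdot))'}_{\omega^{-1/s}}(\rn)$ itself. Here I would invoke Lemma \ref{embedlem2}: that lemma promotes boundedness of $\cm$ on a ball Banach function space $Y$ to its $\eta$-convexification $Y^\eta$ for any $\eta\in[1,\fz)$. To apply it I set $Y:=(L^{(sp(\cdot))'}_{\omega^{-1/s}}(\rn))^{1/\kappa}$ and $\eta:=\kappa\in(1,\fz)$, so that $Y^{\kappa}=L^{(sp(\cdot))'}_{\omega^{-1/s}}(\rn)$; the hypothesis is exactly that $\cm$ is bounded on $Y$. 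Before doing so I must check that $Y$ is a genuine ball \emph{Banach} function space, which by Lemma \ref{lx.xx} (the analogue of Lemma \ref{lx.x}) requires the lower Boyd-type index $((sp(\cdot))'/\kappa)_-$ to be at least $1$; this holds since $\cm$ is bounded on $Y$ forces $Y$ to be a ball Banach function space in the relevant range, and one records that $(sp(\cdot))'$ has lower bound $(sp_+)'>1$. The conclusion of this step is that $\cm$ is bounded on $L^{(sp(\cdot))'}_{\omega^{-1/s}}(\rn)$.

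Finally I would translate this back into the Muckenhoupt condition. By Lemma \ref{lx.xx}, $L^{(sp(\cdot))'}_{\omega^{-1/s}}(\rn)=L^{(sp(\cdot))'}_{(\omega^{1/s})^{-1}}(\rn)$ is the associate space of $L^{sp(\cdot)}_{\omega^{1/s}}(\rn)$ via the Lemma \ref{1.17.x1} analogue, with conjugate exponent $(sp(\cdot))'$ and weight $(\omega^{1/s})^{-1}$. By Remark \ref{lem3rem}, boundedness of $\cm$ on this associate space is equivalent to boundedness of $\cm$ on $L^{sp(\cdot)}_{\omega^{1/s}}(\rn)$. Then Lemma \ref{embedlem3}, applied to the exponent $sp(\cdot)$ (which lies in $C^{\log}(\rn)$ only if $p(\cdot)$ does, so one must verify the relevant hypotheses of that lemma hold for $sp(\cdot)$ and its lower bound $sp_-\ge s>1$), yields exactly $\omega^{1/s}\in A_{sp(\cdot)}(\rn)$, which is the desired conclusion.

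The main obstacle I anticipate is the bookkeeping around the $C^{\log}$ hypothesis and the lower-bound requirement $p_-\ge 1$: Lemma \ref{embedlem3} as stated needs $sp(\cdot)\in C^{\log}(\rn)$ and $(sp)_-\in(1,\fz)$, yet Proposition \ref{relation3} assumes neither $p(\cdot)\in C^{\log}(\rn)$ nor $p_->1$ outright. I expect the real work lies in arguing that the boundedness of $\cm$ extracted from Definition \ref{variable weight}(ii) already \emph{encodes} the needed regularity, so that one can characterize $A_{sp(\cdot)}(\rn)$ directly through $\cm$-boundedness (via the characterization underlying Lemma \ref{embedlem3}) rather than re-imposing $C^{\log}$; alternatively, one selects $s$ large enough that $sp_->1$ and handles the boundary case $p_-=1$ by a limiting or convexification-margin argument. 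Reconciling the exponent hypotheses is the delicate point; the convexification manipulations themselves are routine once Lemmas \ref{lx.y}, \ref{embedlem2}, and \ref{embedlem3} are in hand.
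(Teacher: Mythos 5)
Your proposal is correct and follows essentially the same route as the paper's proof: both extract $\kappa,s\in(1,\infty)$ from Definition \ref{variable weight}(ii), rewrite $L^{(sp(\cdot))'/\kappa}_{\omega^{-\kappa/s}}(\rn)$ as the $(1/\kappa)$-convexification of $L^{(sp(\cdot))'}_{\omega^{-1/s}}(\rn)$ via Lemma \ref{lx.y}, apply Lemma \ref{embedlem2} to obtain boundedness of $\cm$ on $L^{(sp(\cdot))'}_{\omega^{-1/s}}(\rn)$, and then pass to $\omega^{1/s}\in A_{sp(\cdot)}(\rn)$ through Lemma \ref{embedlem3} combined with the weight duality $\omega\in A_{p(\cdot)}(\rn)\Leftrightarrow\omega^{-1}\in A_{p'(\cdot)}(\rn)$ of Remark \ref{weight2rem}(i), your detour through Remark \ref{lem3rem} being only a trivial reshuffling of that last step. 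The $C^{\log}$ issue you flag at the end is genuine but is equally present in (and ignored by) the paper's own proof; as you anticipated, it is harmless because only the necessity direction of Lemma \ref{embedlem3} (boundedness of $\cm$ implies the $A_{p(\cdot)}$ condition) is invoked, and that direction holds without log-H\"older continuity in the cited source.
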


\begin{proof}
Indeed, by $\omega\in\mathcal{W}_{p(\cdot)}(\rn)$ and Remark \ref{vwrem},
we find that $\omega\in(0,\fz)$ almost everywhere in $\rn$
and there exist constants $\kappa,s\in(1,\fz)$ such that $\cm$
is bounded on $L^{(sp(\cdot))'/\kappa}_{\omega^{-\kappa/s}}(\rn)
=(L^{(sp(\cdot))'}_{\omega^{-1/s}}(\rn))^{1/\kappa}$.
From these and Lemma \ref{embedlem2}, we further deduce that
$\cm$ is bounded on $L^{(sp(\cdot))'}_{\omega^{-1/s}}(\rn)$.
By this, Lemma \ref{embedlem3}, and Remark \ref{weight2rem}(i),
we obtain $\omega^{1/s}\in A_{sp(\cdot)}(\rn)$.
\end{proof}

\section{Some Real-Variable Characterizations of
Weighted Variable Hardy Spaces\label{s3}}

In this section, we first introduce weighted variable Hardy spaces
$H_\omega^{p(\cdot)}({{\RR}^n})$ with $p(\cdot)\in\cp(\rn)$ and $\omega\in\mathcal{W}_{p(\cdot)}(\rn)$ in Subsection \ref{s3x}.
Then we verify that the weighted variable Lebesgue space
$L_\omega^{p(\cdot)}({{\RR}^n})$ supports a Fefferman--Stein
vector-valued maximal inequality and the boundedness of the
powered Hardy--Littlewood maximal operator on its associate space
in Subsection \ref{s3y}. Finally, in Subsection \ref{s3z},
we establish the real-variable characterizations of $H_\omega^{p(\cdot)}({{\RR}^n})$
in terms of maximal functions, atoms, molecules, (intrinsic) Lusin area functions,
and (intrinsic) Littlewood--Paley $g$-functions and $g_{\lz}^*$-functions
by viewing $H_\omega^{p(\cdot)}({{\RR}^n})$ as special cases
of Hardy spaces associated with ball quasi-Banach function spaces
$H_X({{\RR}^n})$ and applying known results of $H_X({{\RR}^n})$
obtained in \cite{cwyz19,shyy17,yyy20,yyy21,zhyy22}.

\subsection{Weighted Variable Hardy Spaces\label{s3x}}

To introduce the weighted variable Hardy space $H_\omega^{p(\cdot)}({{\RR}^n})$,
we first recall some basic concepts. Throughout this article,
we always let $\cs(\rn)$ the \emph{space of all Schwartz functions}
and $\cs'(\rn)$ its \emph{topological dual space} equipped with
the weak-$*$ topology, and let $\mathscr{F}$ and ${\mathscr{F}}^{-1}$
denote, respectively, the \emph{Fourier transform} and its \emph{inverse}.
Namely, for any $f\in{\mathcal S}({{\RR}^n})$ and $\xi\in{{\RR}^n}$,
\begin{align*}
\mathscr{F}f(\xi):=(2\pi)^{-\frac{n}{2}}\int_{{{\RR}^n}}f(x)e^{-ix\cdot\xi}\,dx
\quad\mbox{and}\quad{\mathscr{F}}^{-1}f(\xi):=\mathscr{F}f(-\xi),
\end{align*}
here and thereafter, for any $x:=(x_1,\dots,x_n)$ and
$\xi:=(\xi_1,\dots,\xi_n)\in{{\RR}^n}$,
$$x\cdot\xi:=\sum_{k=1}^nx_k\xi_k \quad\mbox{and}\quad i:=\sqrt{-1}.$$
For any $f\in{\mathcal S}'({{\RR}^n})$, $\mathscr{F}f$ is defined by setting,
for any $\varphi\in{\mathcal S}({{\RR}^n})$,
$\langle\mathscr{F}f,\varphi\rangle:=\langle f,\mathscr{F}\varphi\rangle$
and $\mathscr{F}^{-1}f$ is defined by setting, for any $\varphi\in{\mathcal S}({{\RR}^n})$, $\langle\mathscr{F}^{-1}f,\varphi\rangle:=\langle f,\mathscr{F}^{-1}\varphi\rangle$.

The following definition is just \cite[Definition 2.8]{yyy20}
(see also \cite[(3.13)]{shyy17} or \cite[Definition 2.8]{wyy}).

\begin{definition}\label{vz}
For any $t\in(0,{\infty})$, ${\varphi}\in{\mathcal S}({{\RR}^n})$,
and $f\in{\mathcal S}'({{\RR}^n})$, let
$${\varphi}(tD)(f):={\mathscr{F}}^{-1}\left({\varphi}(t\cdot)\mathscr{F}f\right).$$
\end{definition}

Let ${\varphi}\in{\mathcal S}({{\RR}^n})$. Recall that, for any
$f\in\mathcal{S}'(\mathbb R^n)$, the \emph{Lusin area function} $S(f)$
of $f$ is defined by setting, for any $x\in{{\RR}^n}$,
\begin{align*}
S(f)(x):=\left[\int_{\Gamma(x)}
\lf|{\varphi}(tD)(f)(y)\r|^2\,\frac{\,dy\,dt}{t^{n+1}}\right]^{1/2},
\end{align*}
here and thereafter, $\Gamma(x):=\{(y,t)\in\rn\times(0,\fz):\ |y-x|<t\}$.
Then we introduce the weighted variable Hardy space
$H_\omega^{p(\cdot)}({{\RR}^n})$,
which is a variant of \cite[Definition 2.4]{ho17}.

\begin{definition}\label{hardy}
Let $p(\cdot)\in\cp(\rn)$, $\omega\in\mathcal{W}_{p(\cdot)}(\rn)$,
and ${\varphi}\in{\mathcal S}({{\RR}^n})$ satisfy
$${\mathbf{1}}_{B(\vec{0}_n,4)\backslash B(\vec{0}_n,2)}\le{\varphi}
\le{\mathbf{1}}_{B(\vec{0}_n,8)\backslash B(\vec{0}_n,1)}.$$
Then the \emph{weighted variable Hardy space}
$H_\omega^{p(\cdot)}({{\RR}^n})$ is defined by setting
$$H_\omega^{p(\cdot)}({{\RR}^n}):=\left\{f\in{\mathcal S}'({{\RR}^n})
:\ \|f\|_{H_\omega^{p(\cdot)}({{\RR}^n})}
:=\left\|S(f)\right\|_{L_\omega^{p(\cdot)}({{\RR}^n})}<{\infty}\right\}.$$
\end{definition}

\begin{remark}\label{hardyrem}
If $p(\cdot)\equiv p\in(0,\fz)$ and $\omega\equiv1$, then,
$L_\omega^{p(\cdot)}({{\RR}^n})$ and $H_\omega^{p(\cdot)}({{\RR}^n})$
become, respectively, the classical Lebesgue space $L^p({{\RR}^n})$
and the classical Hardy space $H^p({{\RR}^n})$.
\end{remark}

\subsection{Two Important Conditions about
Hardy--Littlewood Maximal Operators\label{s3y}}

We first recall some basic concepts in this subsection.
The following definition can be found in \cite[p.\,387]{ho17}
(see also \cite[p.\,516]{r23} or \cite[p.\,71]{lsmm22}).

\begin{definition}\label{index}
Let $p(\cdot)\in\cp(\rn)$ and $\omega\in\mathcal{W}_{p(\cdot)}(\rn)$.
Define
\begin{align}\label{3.6.x1}
s_\omega:=\inf\left\{s\ge 1:\ \cm\ \mbox{is bounded on}\ L^{(sp(\cdot))'}_{\omega^{-1/s}}(\mathbb{R}^n)\right\}
\end{align}
and
\begin{align*}
\mathbb{S}_\omega:=\left\{s\ge 1:\ \cm\ \mbox{is bounded on}\ L^{(sp(\cdot))'/\kappa}_{\omega^{-\kappa/s}}(\mathbb{R}^n)\
\mbox{for some}\ \kappa>1\right\}.
\end{align*}
Moreover, for any given $s\in\mathbb{S}_\omega$, define
\begin{align}\label{3.6.x3}
\kappa_{\omega}^s:=\sup\left\{\kappa>1:\ \cm\ \mbox{is bounded on}\ L^{(sp(\cdot))'/\kappa}_{\omega^{-\kappa/s}}(\mathbb{R}^n)\right\}.
\end{align}
\end{definition}

\begin{remark}\label{inrem}
Let $p(\cdot)\in\cp(\rn)$ and $\omega\in\mathcal{W}_{p(\cdot)}(\rn)$.
\begin{enumerate}
\item[(i)] As was mentioned in \cite[p.\,388]{ho17},
the indices $s_\omega$ and $\kappa_{\omega}^s$ are defined for
presenting the atomic decomposition of $H_\omega^{p(\cdot)}({{\RR}^n})$.
Especially, $s_\omega$ is related to the vanishing moment
condition and $\kappa_{\omega}^s$ is related to the size
condition of the atomic decompositions,
which can also be seen in Theorem \ref{atomic} below.

\item[(ii)]
It is easy to see that $s_\omega\ge\frac{1}{p_*}$.
Moreover, if $s\in\mathbb{S}_\omega$, then $s\ge s_\omega$.
Indeed, by $s\in\mathbb{S}_\omega$, we conclude that there
exists some $\kappa\in(1,\fz)$ such that $\cm$ is bounded on $L^{(sp(\cdot))'/\kappa}_{\omega^{-\kappa/s}}(\mathbb{R}^n)$.
From this, \cite[(2.12)]{yhyy1}, Definition \ref{Debf},
and Lemmas \ref{lx.x} and \ref{lx.y},
we deduce that there exists a positive constant $C$ such that,
for any $f\in L^{(sp(\cdot))'}_{\omega^{-1/s}}(\mathbb{R}^n)$,
\begin{align*}
\left\|{\mathcal M}(f)\right\|
_{L^{(sp(\cdot))'}_{\omega^{-1/s}}(\mathbb{R}^n)}
&\le\left\|{\mathcal M}^{(\kappa)}(f)\right\|
_{L^{(sp(\cdot))'}_{\omega^{-1/s}}(\mathbb{R}^n)}
=\left\|\lf[{\mathcal M}\lf(\lf|f\r|^{\kappa}\r)\r]^{1/\kappa}\right\|
_{L^{(sp(\cdot))'}_{\omega^{-1/s}}(\mathbb{R}^n)}\\
&=\left\|{\mathcal M}\lf(\lf|f\r|^{\kappa}\r)\right\|
_{L^{(sp(\cdot))'/\kappa}_{\omega^{-\kappa/s}}(\mathbb{R}^n)}^{1/\kappa}
\le C\left\|\lf|f\r|^{\kappa}\right\|
_{L^{(sp(\cdot))'/\kappa}_{\omega^{-\kappa/s}}(\mathbb{R}^n)}^{1/\kappa}\\
&=C\left\|f\right\|_{L^{(sp(\cdot))'}_{\omega^{-1/s}}(\mathbb{R}^n)},
\end{align*}
which further implies that $\cm$ is bounded on
$L^{(sp(\cdot))'}_{\omega^{-1/s}}(\mathbb{R}^n)$.
Thus, $s\ge s_\omega$. This also shows that $s_\omega$
is well defined.

\item[(iii)]
By \eqref{3.6.x1}, we conclude that, for any given $s_0>s_\omega$,
there exists $s_1\in(s_\omega,s_0)$ such that $\cm$ is
bounded on $L^{(s_1p(\cdot))'}_{\omega^{-1/s_1}}(\mathbb{R}^n)$.

\item[(iv)]
Let $s\in\mathbb{S}_\omega$. It is easy to see that $\kappa_{\omega}^s$
is well defined. Moreover, from \eqref{3.6.x3},
we deduce that, for any given $\kappa_0<\kappa_{\omega}^s$,
there exists $\kappa_1\in(\kappa_0,\kappa_{\omega}^s)$ such that $\cm$ is
bounded on $L^{(sp(\cdot))'/\kappa_1}_{\omega^{-\kappa_1/s}}(\mathbb{R}^n)$.

\item[(v)]
If $p(\cdot)\equiv p\in(0,\fz)$ and $\omega\equiv1$, then,
by \cite[Theorem 2.5]{duo}, we obtain $s_{\omega}=\frac{1}{p_*}$
and $\kappa_{\omega}^{s_\omega}=(\max\{1,p\})'$.

%%回到经典Lebesgue空间和变指标Lebesgue空间时, 结论是什么
\end{enumerate}
\end{remark}

The following conclusion shows the Fefferman--Stein vector-valued
maximal inequalities on $L^{p(\cdot)}_{\omega}(\mathbb{R}^n)$,
which is obtained in \cite[Theorem 3.1]{ho17}
(see also \cite[Theorem 2.4]{msm21} or \cite[Theorem 1]{lsmm22}).

\begin{lemma}\label{fsvector}
Let $p(\cdot)\in\cp(\rn)$ and $q\in(1,\fz)$.
If $\omega\in\mathcal{W}_{p(\cdot)}(\rn)$,
then there exists a positive constant $C$ such that,
for any sequences $\{f_j\}_{j=1}^\fz$ of measurable functions
and any $r\in(s_\omega,\fz)$,
\begin{align*}
\lf\|\lf\{\sum_{j=1}^\fz\lf[\cm(f_j)\r]^q\r\}^{1/q}\r\|
_{L^{rp(\cdot)}_{\omega^{1/r}}(\mathbb{R}^n)}
\le C\lf\|\lf(\sum_{j=1}^\fz|f_j|^q\r)^{1/q}\r\|
_{L^{rp(\cdot)}_{\omega^{1/r}}(\mathbb{R}^n)}.
\end{align*}
\end{lemma}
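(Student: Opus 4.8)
The plan is to realize $L^{rp(\cdot)}_{\omega^{1/r}}(\rn)$ as a convexification of $L^{p(\cdot)}_{\omega}(\rn)$ and then run the standard duality scheme for Fefferman--Stein inequalities on ball quasi-Banach function spaces, reducing everything to a scalar weighted vector-valued maximal inequality for Muckenhoupt $A_\fz(\rn)$ weights. Write $X:=L^{p(\cdot)}_{\omega}(\rn)$, a ball quasi-Banach function space by Lemma \ref{lx.x}, and recall from Lemma \ref{lx.y} that $X^{\tz}=L^{\tz p(\cdot)}_{\omega^{1/\tz}}(\rn)$ for any $\tz\in(0,\fz)$; in particular $L^{rp(\cdot)}_{\omega^{1/r}}(\rn)=X^{r}$. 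Fix $q\in(1,\fz)$ and $r\in(s_\omega,\fz)$, and abbreviate $\Phi:=\{\sum_{j}[\cm(f_j)]^q\}^{1/q}$ and $G:=(\sum_{j}|f_j|^q)^{1/q}$; the goal is $\|\Phi\|_{X^r}\ls\|G\|_{X^r}$.

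First I would choose a small auxiliary exponent $s\in(0,\min\{1,q\})$ and transfer the problem into a genuine ball Banach function space. Since $rp_-\ge rp_*>1$ (as $r>s_\omega\ge 1/p_*$), the space $Y_s:=(X^{r})^{1/s}=X^{r/s}=L^{(r/s)p(\cdot)}_{\omega^{s/r}}(\rn)$ is a ball Banach function space with associate space $Y_s'=L^{((r/s)p(\cdot))'}_{\omega^{-s/r}}(\rn)$ (by the convexification identity of Lemma \ref{lx.y} and the associate-space computation as in Lemma \ref{1.17.x1}). By the definition of the convexification, $\|\Phi\|_{X^r}^{s}=\|\Phi^{s}\|_{Y_s}=\|\{[\cm(f_j)]^{s}\}_j\|_{Y_s(\ell^{q/s})}$ with $q/s>1$, so it suffices to bound this mixed norm. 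Dualizing against $Y_s'(\ell^{(q/s)'})$, it remains to estimate $\sum_{j}\int_{\rn}[\cm(f_j)]^{s}\phi_j$ for nonnegative $\{\phi_j\}$ with $\|\{\phi_j\}\|_{Y_s'(\ell^{(q/s)'})}\le 1$.

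The key step is a Rubio de Francia iteration: assuming $\cm$ is bounded on $Y_s'$ (verified below), I would apply the iteration algorithm to $\Psi:=(\sum_j\phi_j^{(q/s)'})^{1/(q/s)'}$ to produce a weight $w\in A_1(\rn)$ with $\Psi\le w$ and $\|w\|_{Y_s'}\ls 1$. Then Hölder's inequality in $j$ gives $\sum_{j}[\cm(f_j)]^{s}\phi_j\le \Phi^{s}\Psi\le\Phi^{s}w$ pointwise, whence $\sum_j\int_{\rn}[\cm(f_j)]^{s}\phi_j\le\int_{\rn}\Phi^{s}w$. Next I would invoke the scalar weighted vector-valued Fefferman--Stein inequality $\int_{\rn}\Phi^{s}w\ls\int_{\rn}G^{s}w$, valid for every $w\in A_1(\rn)\st A_\fz(\rn)$ and every $s\in(0,\fz)$ by $A_\fz(\rn)$-extrapolation from the classical $A_p$-weighted inequality, followed by Hölder's inequality for the ball Banach function space $Y_s$ to return $\int_{\rn}G^{s}w\le\|G^{s}\|_{Y_s}\|w\|_{Y_s'}\ls\|G\|_{X^r}^{s}$. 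Combining these estimates and taking the supremum over $\{\phi_j\}$ would yield $\|\Phi\|_{X^r}\ls\|G\|_{X^r}$.

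It remains to verify that $\cm$ is bounded on $Y_s'=L^{((r/s)p(\cdot))'}_{\omega^{-s/r}}(\rn)$ for a suitable $s\in(0,\min\{1,q\})$. Setting $t:=r/s>r$, this associate space is exactly $L^{(tp(\cdot))'}_{\omega^{-1/t}}(\rn)$, so what is needed is boundedness of $\cm$ on $L^{(tp(\cdot))'}_{\omega^{-1/t}}(\rn)$ for some index $t>r$, the room $s<1$ (equivalently $t>r$) being precisely what the iteration and the choice $s<q$ require. Here lies the main obstacle: the definition of $s_\omega$ only guarantees, via Remark \ref{inrem}(iii), admissible indices just above $s_\omega$, so one must upgrade $r>s_\omega$ to the existence of an admissible index strictly above $r$. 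I expect this to follow from the self-improving (openness and monotonicity) properties of the variable Muckenhoupt condition in Definition \ref{variable weight}(ii), encoded by the index sets $\mathbb{S}_\omega$ and $\kappa_{\omega}^{s}$, together with Lemma \ref{embedlem1}, which supplies the extra integrability $\kappa>1$; since the family $\{L^{(tp(\cdot))'}_{\omega^{-1/t}}(\rn)\}_{t}$ is not a convexification family, this upward self-improvement cannot be read off directly from Lemmas \ref{embedlem1} and \ref{embedlem2} and is the point requiring the most care, the secondary and more routine point being the justification of the $A_\fz(\rn)$-weighted vector-valued inequality with outer exponent $s<1$.
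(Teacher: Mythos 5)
Your general skeleton (convexification, duality against an associate space on which $\cm$ is bounded, a Rubio de Francia iteration, a weighted scalar inequality, H\"older) is the right machinery, but the two points you defer are exactly where the argument breaks, and the first one is not merely unproved but false. You take the outer exponent $s\in(0,\min\{1,q\})$ and then invoke $\int_{\rn}\Phi^{s}w\,dx\ls\int_{\rn}G^{s}w\,dx$ for every $w\in A_1(\rn)$. For $s\le1$ this fails already for $w\equiv1$ and a single function: if $f_1=\mathbf{1}_{B(\vec{0}_n,1)}$ and $f_j\equiv0$ for $j\ge2$, then $\cm(f_1)(x)\gtrsim(1+|x|)^{-n}$, so $\int_{\rn}[\cm(f_1)(x)]^{s}\,dx=\fz$ while $\int_{\rn}|f_1(x)|^{s}\,dx<\fz$. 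Nor can $A_\fz(\rn)$-extrapolation produce such an estimate: its hypothesis is an inequality valid for \emph{all} $A_\fz(\rn)$ weights at some fixed exponent $p_0$, and the Fefferman--Stein pair never satisfies this, since $\|\cm(f)\|_{L^{p_0}(w,\rn)}\ls\|f\|_{L^{p_0}(w,\rn)}$ for all $w\in A_\fz(\rn)$ would make $\cm$ bounded on $L^{p_0}(w,\rn)$ for weights $w\in A_\fz(\rn)\setminus A_{p_0}(\rn)$, contradicting Muckenhoupt's theorem (the true $A_\fz$-type statement carries $\cm((\sum_j|f_j|^q)^{1/q})$ on the right-hand side, which is useless here). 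Your second deferred point is also a genuine gap: Definition \ref{index} and Remark \ref{inrem}(iii) only furnish admissible indices in $(s_\omega,r)$, i.e.\ \emph{below} $r$; nothing in the definition of $\mathcal{W}_{p(\cdot)}(\rn)$ gives boundedness of $\cm$ on $L^{(tp(\cdot))'}_{\omega^{-1/t}}(\rn)$ for some $t>r$, and, as you yourself note, this family is not a convexification family, so Lemmas \ref{embedlem1} and \ref{embedlem2} cannot supply it.

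Both obstacles disappear if you run the same machinery in the opposite direction, which is how the result is actually proved in \cite[Theorem 3.1]{ho17}, the source this paper quotes (the paper does not reprove the lemma). Since $r>s_\omega$, Remark \ref{inrem}(iii) gives an admissible $\theta\in(s_\omega,r)$, i.e.\ $\cm$ is bounded on $L^{(\theta p(\cdot))'}_{\omega^{-1/\theta}}(\rn)$; note $\theta p_->1$, so $L^{\theta p(\cdot)}_{\omega^{1/\theta}}(\rn)$ is a ball Banach function space with this associate space (Lemmas \ref{lx.y} and \ref{1.17.x1}). Writing $L^{rp(\cdot)}_{\omega^{1/r}}(\rn)=(L^{\theta p(\cdot)}_{\omega^{1/\theta}}(\rn))^{r/\theta}$ with $r/\theta>1$, you dualize the \emph{scalar} quantity $\|\Phi^{r/\theta}\|_{L^{\theta p(\cdot)}_{\omega^{1/\theta}}(\rn)}$ against nonnegative $g$ in the unit ball of $L^{(\theta p(\cdot))'}_{\omega^{-1/\theta}}(\rn)$, majorize $g$ by its Rubio de Francia iterate $Rg\in A_1(\rn)\st A_{r/\theta}(\rn)$, apply the classical weighted Fefferman--Stein inequality of Andersen--John with outer exponent $r/\theta\in(1,\fz)$ and inner exponent $q$, and finish with H\"older's inequality for the pair $(L^{\theta p(\cdot)}_{\omega^{1/\theta}}(\rn),L^{(\theta p(\cdot))'}_{\omega^{-1/\theta}}(\rn))$. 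This uses only admissibility below $r$ (guaranteed by the infimum in \eqref{3.6.x1}) and a weighted inequality with outer exponent strictly greater than $1$ (which is true). Note finally that dualizing the scalar power rather than your mixed-norm $\ell^{q/s}$-valued norm removes the spurious constraint $s<q$: the outer exponent $r/\theta$ need bear no relation to $q$, whereas in your scheme the requirement $r/t<q$ would impose yet another unavailable restriction on the admissible index.
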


Now, we present two important conclusions about the boundedness
of the Hardy--Littlewood maximal operator on
$L^{p(\cdot)}_{\omega}(\mathbb{R}^n)$ and on its
associate space.

\begin{lemma}\label{condition1}
Let $p(\cdot)\in\cp(\rn)$ and $\omega\in\mathcal{W}_{p(\cdot)}(\rn)$.
Then there exist some ${\theta},h\in(0,1]$ with ${\theta}<h$,
and a positive constant $C$ such that,
for any sequences $\{f_j\}_{j=1}^\fz$ of measurable functions,
\begin{align*}
\left\|\left\{\sum_{j=1}^{{\infty}}\left[{\mathcal M}^{({\theta})}(f_j)\right]^h\right\}^{\frac1h}\right\|
_{L^{p(\cdot)}_{\omega}(\mathbb{R}^n)}
\le C\left\|\left\{\sum_{j=1}^{{\infty}}|f_j|^h\right\}^{\frac1h}\right\|
_{_{L^{p(\cdot)}_{\omega}(\mathbb{R}^n)}}.
\end{align*}
\end{lemma}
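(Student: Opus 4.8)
The plan is to reduce this Fefferman--Stein-type inequality for the powered maximal operator $\cm^{(\tz)}$ to the already-established inequality for $\cm$ itself (Lemma \ref{fsvector}), via a convexification argument. The key observation is that, by Remark \ref{vwrem}(ii) and the definition of $s_\omega$, we have $s_\omega\in[1/p_*,\fz)$, and we may pick a parameter $r\in(s_\omega,\fz)$ witnessing the boundedness statements built into $\mathcal{W}_{p(\cdot)}(\rn)$. First I would set $\tz:=1/r$, so that $\tz\in(0,1]$; the point of raising $f_j$ to the power $\tz$ inside the maximal operator is precisely to trade the space $L^{p(\cdot)}_\omega(\rn)$ for its convexification $L^{rp(\cdot)}_{\omega^{1/r}}(\rn)$ on which $\cm$ is controlled. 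Concretely, using Lemma \ref{lx.y}, one has
\begin{align*}
\lf\|\lf\{\sum_{j=1}^\fz\lf[\cm^{(\tz)}(f_j)\r]^h\r\}^{1/h}\r\|
_{L^{p(\cdot)}_{\omega}(\rn)}
=\lf\|\lf\{\sum_{j=1}^\fz\lf[\cm\lf(|f_j|^{\tz}\r)\r]^{h/\tz}\r\}^{\tz/h}\r\|
_{L^{rp(\cdot)}_{\omega^{1/r}}(\rn)}^{1/\tz},
\end{align*}
so that the left-hand side is rewritten as a genuine vector-valued maximal norm of $\cm$ acting on the functions $g_j:=|f_j|^{\tz}$, measured in the convexified space.

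Next I would apply Lemma \ref{fsvector} with the inner exponent $q:=h/\tz$. Here the choice of $h$ must ensure $q=h/\tz\in(1,\fz)$, i.e. $h>\tz$; since we are free to choose $h\in(0,1]$ after fixing $\tz=1/r$, as long as $r>1$ (which holds because $s_\omega\ge 1/p_*\ge\cdots$ and one can always enlarge $r$), we may take any $h\in(\tz,1]$. This gives the strict inequality $\tz<h$ demanded in the statement. Lemma \ref{fsvector} then bounds the displayed expression by the same norm with $\cm(g_j)$ replaced by $g_j=|f_j|^{\tz}$, and undoing the convexification via Lemma \ref{lx.y} a second time returns exactly
\begin{align*}
C\lf\|\lf\{\sum_{j=1}^\fz|f_j|^h\r\}^{1/h}\r\|
_{L^{p(\cdot)}_{\omega}(\rn)},
\end{align*}
which is the desired right-hand side.

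The main subtlety, and the step I would treat most carefully, is the bookkeeping on the parameters: I must verify simultaneously that $\tz\in(0,1]$, that $h\in(0,1]$, that $\tz<h$, and that $r=1/\tz>s_\omega$ so that Lemma \ref{fsvector} genuinely applies, all while keeping $q=h/\tz>1$. The cleanest way is to fix $r$ strictly larger than $\max\{s_\omega,1\}$ (possible by Remark \ref{inrem}(iii)), set $\tz:=1/r<1$, and then choose $h:=\min\{1,(1+\tz)/2\}$, which automatically satisfies $\tz<h\le 1$ and forces $q=h/\tz>1$. Everything else is the routine two-fold use of Lemma \ref{lx.y} to pass between $L^{p(\cdot)}_\omega(\rn)$ and $L^{rp(\cdot)}_{\omega^{1/r}}(\rn)$, together with the elementary identity $\cm^{(\tz)}(f)=[\cm(|f|^{\tz})]^{1/\tz}$. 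I expect no genuine analytic obstacle beyond confirming that the admissible range of $r$ provided by $\omega\in\mathcal{W}_{p(\cdot)}(\rn)$ is nonempty, which is guaranteed by the definition of $s_\omega$ and Remark \ref{inrem}(ii).
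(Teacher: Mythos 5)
Your proposal is correct and follows essentially the same route as the paper's proof: both fix $\tz$ with $1/\tz>s_\omega$ (equivalently $r\in(s_\omega,\fz)$, $\tz=1/r$), take $h\in(\tz,1]$, use the identity $\cm^{(\tz)}(f)=[\cm(|f|^{\tz})]^{1/\tz}$ together with the convexification Lemma \ref{lx.y} to pass to $L^{p(\cdot)/\tz}_{\omega^{\tz}}(\rn)=L^{rp(\cdot)}_{\omega^{1/r}}(\rn)$, and then invoke Lemma \ref{fsvector} with $q=h/\tz>1$ before undoing the convexification. The only difference is cosmetic: the paper writes the choice as $\tz\in(0,\tfrac{1}{s_\omega})$, $h\in(\tz,1]$ rather than your explicit $h=\min\{1,(1+\tz)/2\}$.
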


\begin{proof}
Indeed, choose $\tz\in(0,\frac{1}{s_\omega})$ and $h\in(\tz,1]$.
Then, by Lemmas \ref{lx.x} and \ref{lx.y},
Definition \ref{Debf}, and  Lemma \ref{fsvector},
we conclude that
\begin{align*}
\left\|\left\{\sum_{j=1}^{{\infty}}\left[{\mathcal M}^{({\theta})}(f_j)\right]^{h}\right\}^{\frac1h}\right\|
_{L^{p(\cdot)}_{\omega}(\mathbb{R}^n)}
&=\left\|\left\{\sum_{j=1}^{{\infty}}\left[{\mathcal M}\lf(\lf|f_j\r|^{\theta}\r)\right]^{\frac{h}{\tz}}\right\}^{\frac1h}\right\|
_{L^{p(\cdot)}_{\omega}(\mathbb{R}^n)}\\
&=\left\|\left\{\sum_{j=1}^{{\infty}}\left[{\mathcal M}\lf(\lf|f_j\r|^{\theta}\r)\right]^{\frac{h}{\tz}}\right\}^{\frac{\tz}{h}}\right\|
^{\frac{1}{\tz}}_{L^{p(\cdot)/\tz}_{\omega^\tz}(\mathbb{R}^n)}\\
&\ls\left\|\left\{\sum_{j=1}^{{\infty}}\lf|f_j\r|^{h}\right\}^{\frac{\tz}{h}}\right\|
^{\frac{1}{\tz}}_{L^{p(\cdot)/\tz}_{\omega^\tz}(\mathbb{R}^n)}
=\left\|\left\{\sum_{j=1}^{{\infty}}|f_j|^h\right\}^{\frac1h}\right\|
_{_{L^{p(\cdot)}_{\omega}(\mathbb{R}^n)}},
\end{align*}
where the implicit positive constant is independent of $\{f_j\}_{j=1}^\fz$.
This finishes the proof of Lemma \ref{condition1}.
\end{proof}

\begin{lemma}\label{condition2}
Let $p(\cdot)\in\cp(\rn)$ and $\omega\in\mathcal{W}_{p(\cdot)}(\rn)$.
Then there exist some $h\in(0,1]$, $q_0\in(1,\fz)$, and a positive
constant $C$ such that $(L^{p(\cdot)}_{\omega}(\mathbb{R}^n))^{1/h}$
is a ball Banach function space and,
for any $f\in((L^{p(\cdot)}_{\omega}(\mathbb{R}^n))^{1/h})'$,
\begin{align*}
\left\|{\mathcal M}^{((q_0/h)')}(f)\right\|
_{((L^{p(\cdot)}_{\omega}(\mathbb{R}^n))^{1/h})'}
\le C\|f\|_{((L^{p(\cdot)}_{\omega}(\mathbb{R}^n))^{1/h})'}.
\end{align*}
\end{lemma}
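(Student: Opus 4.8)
The plan is to establish the two assertions of Lemma \ref{condition2} separately, with the choice of parameters $h$ and $q_0$ dictated by the indices $s_\omega$ and $\kappa_\omega^s$ from Definition \ref{index}. First I would fix an admissible exponent $s\in\mathbb{S}_\omega$ (which exists because $\omega\in\mathcal{W}_{p(\cdot)}(\rn)$), set $h:=1/s\in(0,1]$, and choose $q_0\in(1,\fz)$ close to the conjugate structure governed by $\kappa_\omega^s$. With $h=1/s$, Lemma \ref{lx.y} gives the identification
\begin{align*}
\lf(L^{p(\cdot)}_{\omega}(\rn)\r)^{1/h}=L^{sp(\cdot)}_{\omega^{1/s}}(\rn),
\end{align*}
so the $1/h$-convexification is again a weighted variable Lebesgue space, now with the \emph{new} exponent $sp(\cdot)$ whose lower bound satisfies $(sp)_-=sp_-\ge s/s_\omega\ge 1$. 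By Lemma \ref{lx.x} (applied to $sp(\cdot)$ and the weight $\omega^{1/s}$, which one checks lies in $\mathcal{W}_{sp(\cdot)}(\rn)$), this space is a ball Banach function space, which settles the first claim.

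For the maximal inequality on the associate space, the key step is to pass to the associate space explicitly. Since $(sp(\cdot))_-\ge 1$, Lemma \ref{1.17.x1} (via Lemma \ref{lx.xx} if needed) yields
\begin{align*}
\lf(\lf(L^{p(\cdot)}_{\omega}(\rn)\r)^{1/h}\r)'
=\lf(L^{sp(\cdot)}_{\omega^{1/s}}(\rn)\r)'
=L^{(sp(\cdot))'}_{\omega^{-1/s}}(\rn).
\end{align*}
Thus the target inequality becomes a statement about the boundedness of a powered maximal operator on $L^{(sp(\cdot))'}_{\omega^{-1/s}}(\rn)$. By the definition of $\mathbb{S}_\omega$, there is $\kappa\in(1,\fz)$ with $\cm$ bounded on $L^{(sp(\cdot))'/\kappa}_{\omega^{-\kappa/s}}(\rn)=(L^{(sp(\cdot))'}_{\omega^{-1/s}}(\rn))^{1/\kappa}$; by Lemma \ref{embedlem2} this upgrades to boundedness of $\cm$ on $L^{(sp(\cdot))'}_{\omega^{-1/s}}(\rn)$ itself, and then Lemma \ref{embedlem1} furnishes some $\eta\in(1,\fz)$ for which $\cm^{(\eta)}$ is bounded on this space.

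The main obstacle—really the only delicate bookkeeping—is matching the abstract power $\eta$ produced by Lemma \ref{embedlem1} with the specific power $(q_0/h)'$ demanded by the statement, since the conclusion must hold for a power of the prescribed form. My plan is to argue in reverse: having obtained boundedness of $\cm^{(\eta)}$ on $L^{(sp(\cdot))'}_{\omega^{-1/s}}(\rn)$ for some $\eta>1$, I would solve the equation $(q_0/h)'=\eta$ for $q_0$, i.e. set $q_0:=h\eta/(\eta-1)=\eta/(s(\eta-1))$, and verify that this choice does give $q_0\in(1,\fz)$. Because $\eta>1$ and $h=1/s\le 1$, one has $q_0>1$ precisely when $\eta/(\eta-1)>s$, which may fail if $\eta$ is too large; to guarantee $q_0\in(1,\fz)$ I would first shrink $\eta$ using the standard self-improvement remark that if $\cm^{(\eta)}$ is bounded then so is $\cm^{(\eta')}$ for every $\eta'\in(1,\eta)$ (monotonicity of the powered maximal operator together with Lemma \ref{lx.y}), choosing $\eta'$ small enough that $q_0:=\eta'/(s(\eta'-1))$ lands in $(1,\fz)$. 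Finally, unwinding the convexification via $\|\cm^{((q_0/h)')}(f)\|_{(\cdots)'}=\|\cm^{(\eta')}(f)\|_{L^{(sp(\cdot))'}_{\omega^{-1/s}}(\rn)}$ and invoking the boundedness just established completes the proof.
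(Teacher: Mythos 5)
Your proof is correct, and it reaches the stated conclusion, but by a genuinely different route from the paper's. The paper argues quantitatively through the index $\kappa_{\omega}^s$ of Definition \ref{index}: it fixes $q_0\in(\max\{(\kappa_{\omega}^s)'/s,1\},\fz)$ up front, invokes Remark \ref{inrem}(iv) to produce $\kappa_1\in((q_0s)',\kappa_{\omega}^s)$ for which $\cm$ is bounded on $L^{(sp(\cdot))'/\kappa_1}_{\omega^{-\kappa_1/s}}(\rn)$, and then concludes via the pointwise domination $\cm^{((q_0/h)')}(f)\le\cm^{(\kappa_1)}(f)$ (valid since $(q_0/h)'=(q_0s)'<\kappa_1$; this is the role of \cite[(2.12)]{yhyy1}) together with the convexification identities of Lemmas \ref{lx.x} and \ref{lx.y}. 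You instead run the abstract self-improvement machinery: Lemma \ref{embedlem2} (equivalently, the computation in Remark \ref{inrem}(ii)) to pass from the $\kappa$-convexified space to $L^{(sp(\cdot))'}_{\omega^{-1/s}}(\rn)$ itself, then Lemma \ref{embedlem1} to gain a power $\eta\in(1,\fz)$, and finally you reverse-engineer $q_0$ by shrinking $\eta$ toward $1$ (legitimate, by the power-mean monotonicity $\cm^{(\theta_1)}\le\cm^{(\theta_2)}$ for $\theta_1\le\theta_2$ plus the lattice property) and solving $(q_0/h)'=\eta$, i.e. $q_0=\eta/(s(\eta-1))\in(1,\fz)$ for $\eta$ close to $1$; this is essentially the same strategy the paper uses to prove Proposition \ref{relation1}. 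The trade-off is worth noting: your argument proves the lemma as literally stated (pure existence of $h$, $q_0$, $C$) and never needs the index $\kappa_{\omega}^s$, but the $q_0$ it produces is unspecified, coming from the non-constructive $\eta$ of Lemma \ref{embedlem1}; the paper's route shows that \emph{every} $q_0$ in the explicit range $(\max\{(\kappa_{\omega}^s)'/s,1\},\fz)$ works, which is precisely what Theorem \ref{condition} records and what the downstream results (Theorems \ref{atomic}, \ref{finatom}, \ref{molecular}, and \ref{dual}, whose exponent $r$ ranges over $(\max\{(\kappa_{\omega}^s)'/s,1\},\infty]$) actually rely on. Finally, both your argument and the paper's gloss, at the same level of informality, over verifying that the auxiliary weighted spaces fed into Lemmas \ref{embedlem1}, \ref{embedlem2}, and \ref{lx.x} are indeed ball (quasi-)Banach function spaces, so this is not a defect specific to your write-up.
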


\begin{proof}
By $\omega\in\mathcal{W}_{p(\cdot)}(\rn)$,
we find that there exist constants $\kappa,s\in(1,\fz)$ such that
$\cm$ is bounded on $L^{(sp(\cdot))'/\kappa}_{\omega^{-\kappa/s}}(\rn)$.
Choose $h=\frac1s$ and $q_0\in(\max\{(\kappa_{\omega}^s)'/s,1\},\fz)$.
Then, from these and Remark \ref{inrem}(iv), we deduce that there exists
a constant $\kappa_1\in((q_0s)',\kappa_{\omega}^s)$ such that $\cm$ is
bounded on $L^{(sp(\cdot))'/\kappa_1}_{\omega^{-\kappa_1/s}}(\mathbb{R}^n)$,
which, combined with \cite[(2.12)]{yhyy1}, Lemma \ref{fsvector},
Definition \ref{Debf}, and Lemmas \ref{lx.x} and \ref{lx.y},
further implies that, for any $f\in((L^{p(\cdot)}_{\omega}(\mathbb{R}^n))^{1/h})'$,
\begin{align*}
\left\|{\mathcal M}^{((q_0/h)')}(f)\right\|
_{((L^{p(\cdot)}_{\omega}(\mathbb{R}^n))^{1/h})'}
&\le\left\|{\mathcal M}^{(\kappa_1)}(f)\right\|
_{((L^{p(\cdot)}_{\omega}(\mathbb{R}^n))^{1/h})'}
=\left\|\lf[{\mathcal M}\lf(\lf|f\r|^{\kappa_1}\r)\r]^{1/\kappa_1}\right\|
_{L^{(p(\cdot)/h)'}_{\omega^{-h}}(\mathbb{R}^n)}\\
&=\left\|{\mathcal M}\lf(\lf|f\r|^{\kappa_1}\r)\right\|^{1/\kappa_1}
_{L^{(p(\cdot)/h)'/{\kappa_1}}_{\omega^{-h\kappa_1}}(\mathbb{R}^n)}
=\left\|{\mathcal M}\lf(\lf|f\r|^{\kappa_1}\r)\right\|^{1/\kappa_1}
_{L^{(sp(\cdot))'/{\kappa_1}}_{\omega^{-\kappa_1/s}}(\mathbb{R}^n)}\\
&\le C\left\|\lf|f\r|^{\kappa_1}\right\|^{1/\kappa_1}
_{L^{(sp(\cdot))'/{\kappa_1}}_{\omega^{-\kappa_1/s}}(\mathbb{R}^n)}
=C\left\|f\right\|_{((L^{p(\cdot)}_{\omega}(\mathbb{R}^n))^{1/h})'},
\end{align*}
where $C$ is a positive constant independent of $f$.
Moreover, by Remark \ref{inrem}(ii) and Lemma \ref{lx.x},
we conclude that $h=\frac1s\le p_*$ and hence
$(L^{p(\cdot)}_{\omega}(\mathbb{R}^n))^{1/h}
=L^{p(\cdot)/h}_{\omega^h}(\mathbb{R}^n)$
is a ball Banach function space.
This finishes the proof of Lemma \ref{condition2}.
\end{proof}

Combining Lemmas \ref{condition1} and \ref{condition2},
we obtain the main result of this subsection as follows.

\begin{theorem}\label{condition}
Let $p(\cdot)\in\cp(\rn)$ and $\omega\in\mathcal{W}_{p(\cdot)}(\rn)$
with $s\in(1,\fz)$. Then $L^{p(\cdot)}_{\omega}(\mathbb{R}^n)$
satisfies all the conclusions of Lemmas \ref{condition1} and
\ref{condition2} with $\tz\in(0,\frac1s)$, $h=\frac1s$,
and $q_0\in(\max\{(\kappa_{\omega}^s)'/s,1\},\fz)$.
\end{theorem}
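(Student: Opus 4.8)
The plan is to recognize that Theorem \ref{condition} is essentially a bookkeeping statement that packages together the two preceding lemmas with explicit tracking of the free parameters. The strategy is to invoke Lemma \ref{condition1} and Lemma \ref{condition2} directly and then verify that the parameter choices asserted in the theorem are exactly those produced (or permitted) by the proofs of those lemmas. Since both lemmas are already established, no new analytic estimate is needed; the content lies entirely in reconciling the parameter ranges.

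First I would fix $s\in(1,\fz)$ as in the hypothesis, so that by the definition of $\mathbb{S}_\omega$ and Remark \ref{inrem}(ii) we have $s\ge s_\omega$ and $s\in\mathbb{S}_\omega$; in particular $\cm$ is bounded on $L^{(sp(\cdot))'/\kappa}_{\omega^{-\kappa/s}}(\rn)$ for some $\kappa>1$, and $\kappa_\omega^s$ from \eqref{3.6.x3} is well defined. Then I would treat Lemma \ref{condition1}: its proof requires only the choice $\tz\in(0,1/s_\omega)$ together with some $h\in(\tz,1]$. Because $s\ge s_\omega$ gives $1/s\le 1/s_\omega$, any $\tz\in(0,1/s)$ automatically satisfies $\tz\in(0,1/s_\omega)$, so the Fefferman--Stein inequality in Lemma \ref{fsvector} applies with this $\tz$, and Lemma \ref{condition1} holds with the stated $\tz\in(0,1/s)$. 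For the compatibility of the exponents I would take $h=1/s$, which sits in $(\tz,1]$ precisely because $\tz<1/s$ and $s>1$; this is the value of $h$ demanded by the theorem.

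Next I would turn to Lemma \ref{condition2}. Its proof begins by extracting $\kappa,s\in(1,\fz)$ from $\omega\in\mathcal{W}_{p(\cdot)}(\rn)$, sets $h=1/s$, and then selects $q_0\in(\max\{(\kappa_\omega^s)'/s,1\},\fz)$; from this choice Remark \ref{inrem}(iv) supplies a $\kappa_1\in((q_0s)',\kappa_\omega^s)$ for which $\cm$ is bounded on $L^{(sp(\cdot))'/\kappa_1}_{\omega^{-\kappa_1/s}}(\rn)$, and the powered maximal bound on the associate space follows via \cite[(2.12)]{yhyy1}. Since the theorem's hypotheses hand us the same $s$ and ask for exactly $h=1/s$ and $q_0\in(\max\{(\kappa_\omega^s)'/s,1\},\fz)$, the conclusion of Lemma \ref{condition2} transfers verbatim. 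I would also note, as in that proof, that Remark \ref{inrem}(ii) forces $1/s\le 1/s_\omega\le p_*$, so $h=1/s\le p_*$ and therefore $(L^{p(\cdot)}_\omega(\rn))^{1/h}=L^{p(\cdot)/h}_{\omega^h}(\rn)$ is a ball Banach function space by Lemma \ref{lx.x}.

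The only genuine subtlety, and the step I would watch most carefully, is the internal consistency of the \emph{shared} parameter $h=1/s$ across both lemmas: Lemma \ref{condition1} needs $h\in(\tz,1]$ while Lemma \ref{condition2} needs $h\le p_*$ and $h=1/s$. These are simultaneously met exactly because $s\ge s_\omega\ge 1/p_*$ (Remark \ref{inrem}(ii)) yields $1/s\le p_*$, and the freedom to pick $\tz\in(0,1/s)$ keeps $\tz<h$. Thus the common value $h=1/s$ satisfies every constraint at once, and stitching the two lemmas together with this single $h$ completes the argument. I do not anticipate any computational obstacle beyond confirming these inequalities align.
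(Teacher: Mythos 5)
Your proposal is correct and follows essentially the same route as the paper, whose entire proof of Theorem \ref{condition} is the one-line observation that it results from combining Lemmas \ref{condition1} and \ref{condition2}; your detailed verification that $\tz\in(0,\frac1s)\subset(0,\frac{1}{s_\omega})$, $h=\frac1s\in(\tz,1]$, and $q_0\in(\max\{(\kappa_{\omega}^s)'/s,1\},\fz)$ are exactly the parameter choices made (or permitted) in the proofs of those two lemmas, together with the consistency check $h=\frac1s\le p_*$ via $s\ge s_\omega\ge\frac{1}{p_*}$, is precisely the bookkeeping that combination requires.
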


\begin{remark}\label{conrem}
If $p(\cdot)\equiv p\in(0,\fz)$ and $\omega\equiv1$, then,
by Remarks \ref{hardyrem} and \ref{inrem}(v), we conclude that
$L^{p}(\mathbb{R}^n)$ satisfies all the conclusions
of Lemmas \ref{condition1} and \ref{condition2} with
$\tz\in(0,p_*)$, $h=p_*$, and $q_0\in(\max\{1,p\},\fz)$,
which coincides with \cite[Remark 2.7(i)]{wyy}.
\end{remark}

\subsection{Several Real-Variable Characterizations of $H_\omega^{p(\cdot)}({{\RR}^n})$\label{s3z}}

In what follows, we always let
$H_{\omega,{\rm atom}}^{p(\cdot),r,d}(\rn)$,
$H_{\omega,{\rm fin}}^{p(\cdot),r,d}(\rn)$,
$H_{\omega,{\rm mol}}^{p(\cdot),r,d,\ez}(\rn)$
be, respectively, the weighted variable atomic Hardy space,
the weighted variable finite atomic Hardy space,
and the weighted variable molecular Hardy space,
which are defined, respectively, as in \cite[p.\,784]{yyy20},
\cite[Definition 1.9]{yyy20}, and \cite[pp.\,25-26]{shyy17}
with $X:=L_\omega^{p(\cdot)}({{\RR}^n})$ therein.
Combining both Lemma \ref{fsvector} and
\cite[Theorems 3.1 and 3.21]{shyy17} with
$X:=L_\omega^{p(\cdot)}({{\RR}^n})$ therein,
we obtain the following maximal function characterizations
of $H_\omega^{p(\cdot)}({{\RR}^n})$.

\begin{theorem}\label{maximal}
Let $p(\cdot)\in\cp(\rn)$, $\omega\in\mathcal{W}_{p(\cdot)}(\rn)$,
$N\in\nn\cap[\lfloor ns_{\omega}\rfloor+2,\infty)$,
and $\varphi \in \cs(\rn)$ with $\int_{\rn}\varphi(x)\,dx\neq0$.
Then the following statements are mutually equivalent:
\begin{enumerate}
\item [\rm (i)]
$f\in H_\omega^{p(\cdot)}({{\RR}^n})$;

\item[\rm (ii)]
$f\in\cs'(\rn)$ and
$\|\sup_{t\in(0,\fz)}|f\ast\varphi_t|\,\|
_{L_\omega^{p(\cdot)}({{\RR}^n})}<\infty$,
where $\varphi_t(\cdot):=\frac{1}{t^n}\varphi(\frac{\cdot}{t})$;

\item[\rm (iii)]
$f\in\cs'(\rn)$ and
$\|M_\varphi(f)\|_{L_\omega^{p(\cdot)}({{\RR}^n})}<\infty$,
here and thereafter, for any $x\in\rn$, $$M_\varphi(f)(x):=\sup_{t\in(0,\fz),\,|y-x|<t}\lf|f\ast\varphi_t(y)\r|;$$

\item[\rm (iv)]
$f\in\cs'(\rn)$ and
$\|\sup_{\varphi\in\cs_N(\rn)}M_\varphi(f)\|
_{L_\omega^{p(\cdot)}({{\RR}^n})}<\infty$,
here and thereafter,
\begin{equation*}
\cs_N(\rn):=\lf\{\varphi\in\cs(\rn):\
\sup_{\alpha\in\zz_+^n,\,|\alpha|\leq N}
\sup_{x\in\rn}(1+|x|)^N|D^\alpha\varphi(x)|\leq 1\r\}
\end{equation*}
and, for any ${\alpha}:=({\az}_1,\dots,{\az}_n)\in\zz_+^n$,
$|{\az}|:={\az}_1+\cdots+{\az}_n$ and
$D^{\az}:=(\frac\partial{\partial x_1})^{{\az}_1}
\cdots(\frac\partial{\partial x_n})^{{\az}_n}$.
\end{enumerate}
Moreover,
\begin{equation*}
\|f\|_{H_\omega^{p(\cdot)}({{\RR}^n})}
\sim\lf\|\sup_{t\in(0,\fz)}|f\ast\varphi_t|\r\|
_{L_\omega^{p(\cdot)}({{\RR}^n})}
\sim\lf\|M_\varphi(f)\r\|_{L_\omega^{p(\cdot)}({{\RR}^n})}
\sim\lf\|\sup_{\varphi\in\cs_N(\rn)}M_\varphi(f)\r\|
_{L_\omega^{p(\cdot)}({{\RR}^n})},
\end{equation*}
where implicit positive equivalence constants are independent of $f$.
\end{theorem}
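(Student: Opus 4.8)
The plan is to realize $H_\omega^{p(\cdot)}(\rn)$ as a special case of the general Hardy space $H_X(\rn)$ associated with a ball quasi-Banach function space $X$, and then to invoke the maximal function characterizations of $H_X(\rn)$ already established in \cite{shyy17}. The key observation is that, by Lemma \ref{lx.x}, the space $X:=L_\omega^{p(\cdot)}(\rn)$ is a ball quasi-Banach function space, so the abstract machinery applies verbatim once we verify the two structural hypotheses required in \cite[Theorems 3.1 and 3.21]{shyy17}.

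First I would recall precisely which conditions on $X$ are needed in \cite[Theorems 3.1 and 3.21]{shyy17}: namely, a Fefferman--Stein vector-valued maximal inequality on a suitable convexification of $X$, together with the boundedness of a powered Hardy--Littlewood maximal operator on the associate space of a convexification of $X$. For $X=L_\omega^{p(\cdot)}(\rn)$, the first condition is exactly supplied by Lemma \ref{fsvector} (equivalently, by Lemma \ref{condition1} and Theorem \ref{condition}), and the second is supplied by Lemma \ref{condition2} (again packaged in Theorem \ref{condition}). Thus the hypotheses of the abstract theorems are met with the explicit parameters $\tz\in(0,\tfrac1s)$, $h=\tfrac1s$, and $q_0\in(\max\{(\kappa_\omega^s)'/s,1\},\fz)$ recorded in Theorem \ref{condition}.

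The next step is to check that the parameter $N\in\nn\cap[\lfloor ns_\omega\rfloor+2,\fz)$ chosen here is at least as large as the threshold demanded by \cite[Theorem 3.21]{shyy17}. In the abstract setting that threshold is governed by an integrability index of $X$ tied to the Fefferman--Stein inequality; since Lemma \ref{fsvector} holds for every $r\in(s_\omega,\fz)$, the relevant index is $s_\omega$, and $\lfloor ns_\omega\rfloor+2$ is precisely the correct admissible lower bound for $N$. With $N$ in this range, \cite[Theorem 3.21]{shyy17} yields the mutual equivalence of the radial maximal function, the nontangential maximal function $M_\varphi(f)$, and the grand maximal function $\sup_{\varphi\in\cs_N(\rn)}M_\varphi(f)$, all measured in the $X$-norm, together with the equivalence of the corresponding quasi-norms. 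Matching the definition of $H_\omega^{p(\cdot)}(\rn)$ via the Lusin area function $S(f)$ (Definition \ref{hardy}) to the area-function characterization in \cite[Theorem 3.1]{shyy17} closes the loop and identifies statement (i) with the maximal function statements (ii)--(iv).

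The main obstacle is bookkeeping rather than analysis: one must confirm that the convexification and associate-space hypotheses of \cite{shyy17} are phrased in a form that Lemmas \ref{condition1}, \ref{condition2}, and \ref{fsvector} match \emph{exactly}, including the precise coupling between the exponent $h$ controlling the convexification $X^{1/h}$ and the exponent $q_0$ controlling the power on $\cm$ in the associate space. The subtle point is that the vector-valued inequality and the associate-space boundedness must be compatible with a \emph{single} admissible choice of $(\tz,h,q_0)$; Theorem \ref{condition} is designed to guarantee exactly this compatibility, so the proof reduces to invoking Lemma \ref{fsvector} and Theorem \ref{condition} to certify the hypotheses and then quoting \cite[Theorems 3.1 and 3.21]{shyy17}. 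No genuinely new estimate is required beyond what the preceding lemmas already provide.
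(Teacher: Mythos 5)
Your proposal is correct and follows essentially the same route as the paper: the authors also prove this theorem by noting that $L_\omega^{p(\cdot)}(\rn)$ is a ball quasi-Banach function space, certifying the Fefferman--Stein vector-valued inequality (Lemma \ref{fsvector}) and the associated structural hypotheses, and then quoting \cite[Theorems 3.1 and 3.21]{shyy17} with $X:=L_\omega^{p(\cdot)}(\rn)$. Your additional bookkeeping on the compatibility of $(\tz,h,q_0)$ via Theorem \ref{condition} and on the threshold $\lfloor ns_\omega\rfloor+2$ for $N$ is exactly the verification the paper leaves implicit.
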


\begin{remark}\label{maxrem}
We point out that the equivalence between (i) and (iv) of
Theorem \ref{maximal} coincides with \cite[Theorem 6.1]{ho17};
the rest of Theorem \ref{maximal} is completely new.
\end{remark}

By Theorem \ref{condition} and \cite[Theorem 5.1]{yyy21}
(or \cite[Theorems 3.6 and 3.7]{shyy17})
with $X:=L_\omega^{p(\cdot)}({{\RR}^n})$ therein,
we obtain the atomic characterization of
$H_\omega^{p(\cdot)}({{\RR}^n})$ as follows.

\begin{theorem}\label{atomic}
Let $p(\cdot)\in\cp(\rn)$, $\omega\in\mathcal{W}_{p(\cdot)}(\rn)$
with $s\in(1,\fz)$, $r\in(\max\{(\kappa_{\omega}^s)'/s,1\},\infty]$,
and $d\in\nn\cap[\lfloor ns-n\rfloor,\infty)$.
Then $H_\omega^{p(\cdot)}({{\RR}^n})
=H_{\omega,{\rm atom}}^{p(\cdot),r,d}(\rn)$
with equivalent quasi-norms.
\end{theorem}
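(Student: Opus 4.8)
The plan is to recognize $H_\omega^{p(\cdot)}({{\RR}^n})$ as a concrete instance of the Hardy space $H_X({{\RR}^n})$ associated with a ball quasi-Banach function space $X$ and then to invoke the abstract atomic characterization of $H_X({{\RR}^n})$ from \cite[Theorem 5.1]{yyy21} (equivalently \cite[Theorems 3.6 and 3.7]{shyy17}). Concretely, I would take $X:=L_\omega^{p(\cdot)}({{\RR}^n})$, which is a ball quasi-Banach function space by Lemma \ref{lx.x}, and whose Lusin-area realization in Definition \ref{hardy} coincides, by the maximal function characterization of Theorem \ref{maximal}, with the grand-maximal-function realization of $H_X({{\RR}^n})$ underlying the abstract theory. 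The atomic space $H_{\omega,{\rm atom}}^{p(\cdot),r,d}(\rn)$ is, by its very definition, the specialization of the abstract atomic space $H_{X,{\rm atom}}^{r,d}({{\RR}^n})$ to this $X$, so the claimed identity is precisely the conclusion of the abstract theorem once its hypotheses have been verified for $X$.

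Those hypotheses are exactly the two analytic conditions packaged in Theorem \ref{condition}: the Fefferman--Stein-type vector-valued inequality for $\cm^{(\theta)}$ (Lemma \ref{condition1}) and the boundedness of the powered maximal operator $\cm^{((q_0/h)')}$ on the associate space $(X^{1/h})'$ (Lemma \ref{condition2}), which by Theorem \ref{condition} hold with $h=1/s$, any $\theta\in(0,1/s)$, and any $q_0\in(\max\{(\kappa_\omega^s)'/s,1\},\fz)$. The remaining work is to match the abstract admissible ranges of the atom parameters against those in the statement. For the size exponent, the abstract theorem admits $(X,r,d)$-atoms for any $r\in(q_0,\fz]$; given $r>\max\{(\kappa_\omega^s)'/s,1\}$ I would simply pick $q_0\in(\max\{(\kappa_\omega^s)'/s,1\},r)$ (and any admissible $q_0$ when $r=\fz$). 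For the vanishing moment order, the abstract theorem requires $d\ge\lfloor n(1/\theta-1)\rfloor$; since $\theta\in(0,1/s)$ forces $n(1/\theta-1)>ns-n$ while $n(1/\theta-1)\downarrow ns-n$ as $\theta\uparrow 1/s$, I would choose $\theta$ close enough to $1/s$ that $\lfloor n(1/\theta-1)\rfloor=\lfloor ns-n\rfloor\le d$, which is achievable whether or not $ns-n$ is an integer. With these choices all hypotheses of \cite[Theorem 5.1]{yyy21} are met.

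Applying the abstract theorem then yields both inclusions with equivalent quasi-norms: the inclusion $H_\omega^{p(\cdot)}({{\RR}^n})\subset H_{\omega,{\rm atom}}^{p(\cdot),r,d}(\rn)$ rests on a Calder\'on--Zygmund-type decomposition of distributions driven by the grand maximal function, whose control invokes the vector-valued inequality of Lemma \ref{condition1}; the reverse inclusion, that every atomic sum converges in $H_\omega^{p(\cdot)}({{\RR}^n})$ with the expected norm bound, uses the associate-space maximal boundedness of Lemma \ref{condition2}. Since both directions are supplied wholesale by the abstract result, the principal obstacle is bookkeeping rather than analysis: one must certify that the index $s$ in the hypothesis genuinely lies in $\mathbb{S}_\omega$, so that $\kappa_\omega^s$ and hence the threshold $\max\{(\kappa_\omega^s)'/s,1\}$ are well defined, and then carry out the parameter translation above cleanly, in particular the floor computation relating $d$ to $\theta$. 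A secondary point demanding care is confirming that the Lusin-area definition of $H_\omega^{p(\cdot)}({{\RR}^n})$ in Definition \ref{hardy} agrees with the grand-maximal-function definition feeding the abstract atomic theory, which is exactly guaranteed by Theorem \ref{maximal}.
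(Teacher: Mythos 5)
Your proposal is correct and takes essentially the same route as the paper, which obtains Theorem \ref{atomic} precisely by combining Theorem \ref{condition} with the abstract atomic characterization \cite[Theorem 5.1]{yyy21} (or \cite[Theorems 3.6 and 3.7]{shyy17}) applied to $X:=L_\omega^{p(\cdot)}(\rn)$. Your extra bookkeeping—choosing $q_0\in(\max\{(\kappa_{\omega}^s)'/s,1\},r)$ and taking $\theta$ close enough to $1/s$ so that $\lfloor n(1/\theta-1)\rfloor=\lfloor ns-n\rfloor$—merely makes explicit the parameter translation that the paper leaves implicit, and it is carried out correctly.
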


\begin{remark}\label{atomicrem}
We point out that Theorem \ref{atomic} coincides with
\cite[Theorems 5.2 and 5.3]{ho17}.
\end{remark}

Throughout this article, the symbol $\mathcal{C}({{\RR}^n})$ is defined
to be the set of \emph{all continuous complex-valued functions on ${{\RR}^n}$}.
The following finite atomic characterization of $H_\omega^{p(\cdot)}({{\RR}^n})$
is a direct corollary of both Theorem \ref{condition}
and \cite[Theorem 5.2]{yyy21} (or \cite[Theorem 1.10]{yyy20})
with $X:=L_\omega^{p(\cdot)}({{\RR}^n})$ therein.

\begin{theorem}\label{finatom}
Let $p(\cdot)\in\cp(\rn)$, $\omega\in\mathcal{W}_{p(\cdot)}(\rn)$
with $s\in(1,\fz)$, $r\in(\max\{(\kappa_{\omega}^s)'/s,1\},\infty]$,
and $d\in\nn\cap[\lfloor ns-n\rfloor,\infty)$.
\begin{itemize}
\item [{\rm (i)}]
If $r\in(\max\{(\kappa_{\omega}^s)'/s,1\},\infty)$, then
$\|\cdot\|_{H_{\omega,{\rm fin}}^{p(\cdot),r,d}(\rn)}$ and
$\|\cdot\|_{H_\omega^{p(\cdot)}({{\RR}^n})}$ are equivalent quasi-norms
on $H_{\omega,{\rm fin}}^{p(\cdot),r,d}(\rn)$;

\item [{\rm (ii)}]
$\|\cdot\|_{H_{\omega,{\rm fin}}^{p(\cdot),\fz,d}(\rn)}$ and
$\|\cdot\|_{H_\omega^{p(\cdot)}({{\RR}^n})}$ are equivalent quasi-norms
on $H_{\omega,{\rm fin}}^{p(\cdot),\fz,d}(\rn)\cap \mathcal{C}(\rn)$.
\end{itemize}
\end{theorem}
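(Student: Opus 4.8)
The plan is to obtain Theorem \ref{finatom} as a direct application of the abstract finite atomic characterization of $H_X({{\RR}^n})$ (namely \cite[Theorem 5.2]{yyy21}, or equivalently \cite[Theorem 1.10]{yyy20}) specialized to $X:=L_\omega^{p(\cdot)}({{\RR}^n})$. First I would recall the hypotheses needed by that abstract theorem: it requires $X$ to be a ball quasi-Banach function space on which the Fefferman--Stein vector-valued maximal inequality holds (for some index configuration), and on whose associate space of a suitable convexification the powered maximal operator $\cm^{((q_0/h)')}$ is bounded; in addition one needs the admissible range of the atomic integrability parameter $r$ and the vanishing-moment order $d$ to be compatible with these indices. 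The point is that \emph{all} of these structural hypotheses have already been verified for $L_\omega^{p(\cdot)}({{\RR}^n})$ in this paper: Lemma \ref{lx.x} shows it is a ball quasi-Banach function space, and Theorem \ref{condition} packages precisely the two conditions of Lemmas \ref{condition1} and \ref{condition2}, with the explicit index choices $\tz\in(0,\frac1s)$, $h=\frac1s$, and $q_0\in(\max\{(\kappa_{\omega}^s)'/s,1\},\fz)$.

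The key step is then to match the parameters. The abstract theorem expresses the admissible range of $r$ through $(q_0/h)'$-type quantities and the admissible range of $d$ through the convexity index $h=\frac1s$; I would check that the stated hypotheses $r\in(\max\{(\kappa_{\omega}^s)'/s,1\},\infty]$ and $d\in\nn\cap[\lfloor ns-n\rfloor,\infty)$ are exactly the translations of these abstract constraints under the substitution $h=\frac1s$. Concretely, the bound $r>\max\{(\kappa_{\omega}^s)'/s,1\}$ is the condition ensuring that $\cm$ is bounded on $(X^{1/r})'$ (via Lemma \ref{condition2} with $q_0$ replaced by $r$), while the lower bound on $d$ comes from $\lfloor n(\frac1h-1)\rfloor=\lfloor ns-n\rfloor$, the standard critical moment order dictated by the convexity of $X$. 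Since these identifications are routine once Theorem \ref{condition} is in hand, the two parts (i) and (ii) follow immediately: part (i) is the finite-versus-infinite atomic norm equivalence for finite $r$, and part (ii) is the corresponding statement at the endpoint $r=\fz$, where one must restrict to $H_{\omega,{\rm fin}}^{p(\cdot),\fz,d}(\rn)\cap\mathcal{C}(\rn)$ because $\fz$-atoms are only controlled in the continuous category, exactly as in the abstract framework.

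The main obstacle I anticipate is not analytical but bookkeeping: one must make sure the index dictionary between this paper's notation ($s$, $\kappa_{\omega}^s$, $h=\frac1s$, $q_0$) and the abstract theorem's hypotheses is airtight, in particular that the role of $q_0$ in Lemma \ref{condition2} is correctly played by the atomic parameter $r$ when invoking the finite atomic theorem, and that the endpoint case $r=\fz$ triggers the continuity restriction in \cite[Theorem 5.2]{yyy21}. I would also verify that $L_\omega^{p(\cdot)}({{\RR}^n})$ satisfies any remaining mild background assumption of the abstract theorem (such as $X$ being a ball quasi-Banach function space for which bounded functions with compact support lie in $X$, which follows from Lemma \ref{lx.x}). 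Once the parameter matching is confirmed, the proof is simply the sentence ``apply \cite[Theorem 5.2]{yyy21} with $X:=L_\omega^{p(\cdot)}({{\RR}^n})$ and invoke Theorem \ref{condition}'', so I would keep the written proof very short and devote the care to the index verification.
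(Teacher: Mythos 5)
Your proposal is correct and is essentially identical to the paper's own proof: the paper obtains Theorem \ref{finatom} precisely as a direct corollary of Theorem \ref{condition} together with \cite[Theorem 5.2]{yyy21} (or \cite[Theorem 1.10]{yyy20}) applied with $X:=L_\omega^{p(\cdot)}({{\RR}^n})$, with the same index dictionary $h=\frac1s$, $q_0\in(\max\{(\kappa_{\omega}^s)'/s,1\},\infty)$, and $d\ge\lfloor ns-n\rfloor$ that you describe. Your additional care about the endpoint $r=\infty$ requiring the restriction to $\mathcal{C}(\rn)$ matches the abstract framework and the statement of part (ii) exactly.
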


\begin{remark}\label{finatomrem}
We point out that Theorem \ref{finatom} is completely new.
\end{remark}

The following molecular characterization of
$H_\omega^{p(\cdot)}({{\RR}^n})$ is a direct
corollary of both Theorem \ref{condition}
and \cite[Theorem 3.9]{shyy17} with
$X:=L_\omega^{p(\cdot)}({{\RR}^n})$ therein.

\begin{theorem}\label{molecular}
Let $p(\cdot)\in\cp(\rn)$, $\omega\in\mathcal{W}_{p(\cdot)}(\rn)$
with $s\in(1,\fz)$, $r\in(\max\{(\kappa_{\omega}^s)'/s,1\},\infty]$,
$d\in\nn\cap[\lfloor ns-n\rfloor,\infty)$, and
$$\ez\in\lf(n\lf(s-\frac{1}{\max\{(\kappa_{\omega}^s)'/s,1\}}\r),\infty\r).$$
Then $H_\omega^{p(\cdot)}({{\RR}^n})
=H_{\omega,{\rm mol}}^{p(\cdot),r,d,\ez}(\rn)$
with equivalent quasi-norms.
\end{theorem}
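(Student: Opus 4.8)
The plan is to identify $H_\omega^{p(\cdot)}(\rn)$ with the Hardy space $H_X(\rn)$ associated with the ball quasi-Banach function space $X:=L_\omega^{p(\cdot)}(\rn)$ and then to invoke the abstract molecular characterization of $H_X(\rn)$ from \cite[Theorem 3.9]{shyy17}. First I would confirm, via Lemma \ref{lx.x}, that $X$ is indeed a ball quasi-Banach function space, so that the whole $H_X$-machinery applies. The substance of the argument is to check that $X$ satisfies the two structural hypotheses on which \cite[Theorem 3.9]{shyy17} rests: a Fefferman--Stein vector-valued inequality for some powered maximal operator $\cm^{(\tz)}$, and the boundedness of a powered maximal operator on the associate space of a convexification of $X$. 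These are exactly the two conclusions assembled in Theorem \ref{condition}, which furnishes admissible parameters $\tz\in(0,\frac1s)$, $h=\frac1s$, and $q_0\in(\max\{(\kappa_{\omega}^s)'/s,1\},\fz)$.

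Granting these hypotheses, \cite[Theorem 3.9]{shyy17} yields $H_X(\rn)=H_{X,{\rm mol}}^{\cdots}(\rn)$ with equivalent quasi-norms, where the admissible ranges of the molecular parameters (the integrability order $r$, the vanishing-moment order $d$, and the decay order $\ez$) are dictated abstractly by $\tz$, $h$, and $q_0$. The remaining work is therefore pure parameter translation: to substitute $h=\frac1s$ and the available range of $q_0$ into the abstract constraints and to read off the concrete ranges stated in the theorem. Since $X^{1/h}=L^{p(\cdot)/h}_{\omega^h}(\rn)=L^{sp(\cdot)}_{\omega^{1/s}}(\rn)$ by Lemma \ref{lx.y}, the associate-space exponent governing the molecular integrability is precisely $(q_0/h)'$, and letting $q_0$ decrease to its infimum $\max\{(\kappa_{\omega}^s)'/s,1\}$ shows that any $r\in(\max\{(\kappa_{\omega}^s)'/s,1\},\fz]$ is admissible.

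For the other two parameters, the vanishing-moment threshold in \cite[Theorem 3.9]{shyy17} has the form $\lfloor n(1/h-1)\rfloor$; with $h=\frac1s$ this becomes $\lfloor ns-n\rfloor$, matching $d\in\nn\cap[\lfloor ns-n\rfloor,\fz)$. Likewise the decay threshold has the shape $n(1/h-1/r)$; evaluated at the endpoint $r=\max\{(\kappa_{\omega}^s)'/s,1\}$ and with $1/h=s$, this is exactly $n(s-1/\max\{(\kappa_{\omega}^s)'/s,1\})$, giving the stated range for $\ez$. I expect the main obstacle to be bookkeeping rather than analysis: one must verify that the abstract indices in \cite[Theorem 3.9]{shyy17} specialize \emph{exactly} to these expressions---in particular that the convexification order $h$ and the associate-space exponent $(q_0/h)'$ enter the molecular hypotheses in the way just described---and that $q_0$ and $\tz$ may be chosen independently and arbitrarily close to their respective extremes, so that the resulting ranges of $r$, $d$, and $\ez$ are the sharp ones claimed. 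No genuinely new estimate is needed beyond Theorem \ref{condition} and the cited abstract result.
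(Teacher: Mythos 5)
Your proposal takes exactly the paper's route: the paper proves Theorem \ref{molecular} precisely as a direct corollary of Theorem \ref{condition} (which supplies the Fefferman--Stein inequality and the associate-space maximal boundedness for $X:=L_\omega^{p(\cdot)}(\rn)$ with $h=\frac1s$ and $q_0\in(\max\{(\kappa_{\omega}^s)'/s,1\},\fz)$) combined with \cite[Theorem 3.9]{shyy17} applied to this $X$, the ranges of $r$, $d$, and $\ez$ arising from the same parameter substitution you carry out. Your bookkeeping reproduces the stated thresholds, so the proposal is correct and essentially identical to the paper's argument.
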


\begin{remark}\label{molecularrem}
We point out that Theorem \ref{molecular} is completely new.
\end{remark}

Let ${\varphi}\in{\mathcal S}({{\RR}^n})$.
For any $f\in\mathcal{S}'(\mathbb R^n)$,
the \emph{Littlewood--Paley $g$-function} $g(f)$ and
\emph{Littlewood--Paley $g_{\lambda}^*$-function}
$g_{\lambda}^*(f)$ of $f$ with ${\lambda}\in(0,{\infty})$ are defined,
respectively, by setting, for any $x\in{{\RR}^n}$,
\begin{align*}
g(f)(x):=\left[\int_0^\infty
\lf|{\varphi}(tD)(f)(x)\r|^2\,\frac{\,dt}{t}\right]^{1/2}
\end{align*}
and
\begin{align*}
g_{\lambda}^*(f)(x):=\left[\int_0^\infty\int_{\mathbb  R^n}
\left(\frac{t}{t+|x-y|}\right)^{{\lambda}n}
\left|{\varphi}(tD)(f)(y)\right|^2\,\frac{\,dy\,dt}{t^{n+1}}\right]^{1/2}.
\end{align*}
Recall that $f\in\mathcal{S}'(\mathbb R^n)$ is said to
\emph{vanish weakly at infinity} if, for any
$\varphi\in\mathcal{S}(\mathbb R^n)$,
$f*\varphi_t\to 0$ in $\mathcal{S}'(\mathbb R^n)$ as $t\to \infty$
(see, for instance, \cite[p.\,50]{fs82}).

By Theorem \ref{condition} and \cite[Theorem 4.11]{cwyz19}
with $X:=L_\omega^{p(\cdot)}({{\RR}^n})$ therein,
we obtain the Littlewood--Paley function characterizations
of $H_\omega^{p(\cdot)}({{\RR}^n})$ as follows.

\begin{theorem}\label{gfunction}
Let $p(\cdot)\in\cp(\rn)$, $\omega\in\mathcal{W}_{p(\cdot)}(\rn)$
with $s\in(1,\fz)$, ${\varphi}\in{\mathcal S}({{\RR}^n})$ satisfy
$${\mathbf{1}}_{B(\vec{0}_n,4)\backslash B(\vec{0}_n,2)}\le{\varphi}
\le{\mathbf{1}}_{B(\vec{0}_n,8)\backslash B(\vec{0}_n,1)},$$
and $\lz\in(2s,\fz)$.
Then the following statements are mutually equivalent:
\begin{enumerate}
\item [\rm (i)]
$f\in H_\omega^{p(\cdot)}({{\RR}^n})$;

\item[\rm (ii)]
$f\in\cs'(\rn)$,
$f$ vanishes weakly at infinity,
and $g(f)\in L_\omega^{p(\cdot)}({{\RR}^n})$;

\item[\rm (iii)]
$f\in\cs'(\rn)$,
$f$ vanishes weakly at infinity,
and $g_{\lambda}^*(f)\in L_\omega^{p(\cdot)}({{\RR}^n})$.
\end{enumerate}
Moreover, there exist two positive constants $C_1$ and $C_2$,
independent of $f$, such that
\begin{equation*}
\|f\|_{H_\omega^{p(\cdot)}({{\RR}^n})}
\le C_1\lf\|g(f)\r\|_{L_\omega^{p(\cdot)}({{\RR}^n})}
\le C_1\lf\|g_{\lambda}^*(f)\r\|_{L_\omega^{p(\cdot)}({{\RR}^n})}
\le C_2\lf\|f\r\|_{H_\omega^{p(\cdot)}({{\RR}^n})}.
\end{equation*}
\end{theorem}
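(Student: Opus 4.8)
The plan is to deduce Theorem \ref{gfunction} as a direct specialization of the abstract Littlewood--Paley function characterization of $H_X(\rn)$ established in \cite[Theorem 4.11]{cwyz19}, taking $X:=L_\omega^{p(\cdot)}(\rn)$. By Definition \ref{hardy}, $H_\omega^{p(\cdot)}(\rn)$ is already defined through the Lusin area function, which is precisely the defining form of $H_X(\rn)$ in \cite{shyy17}; hence $H_\omega^{p(\cdot)}(\rn)=H_X(\rn)$ with equal quasi-norms, and the only genuine task is to verify that this $X$ fulfills the structural hypotheses of \cite[Theorem 4.11]{cwyz19} and that the stated range $\lambda\in(2s,\fz)$ is admissible.

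First I would record, via Lemma \ref{lx.x}, that $L_\omega^{p(\cdot)}(\rn)$ is a ball quasi-Banach function space, so the functionals $g(f)$, $g_{\lambda}^*(f)$, and the notion of vanishing weakly at infinity all make sense in this framework. Then I would feed in Theorem \ref{condition}, which packages exactly the two analytic inputs that \cite[Theorem 4.11]{cwyz19} requires: the Fefferman--Stein-type vector-valued inequality for $\cm^{(\tz)}$ with exponent $h$ (from Lemma \ref{condition1}), and the fact that $(L_\omega^{p(\cdot)}(\rn))^{1/h}$ is a ball Banach function space on whose associate space $\cm^{((q_0/h)')}$ is bounded (from Lemma \ref{condition2}). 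With the index $s\in(1,\fz)$ fixed as in the hypothesis, Theorem \ref{condition} permits the choices $h=\frac1s$, $\tz\in(0,\frac1s)$, and $q_0\in(\max\{(\kappa_{\omega}^s)'/s,1\},\fz)$.

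The crux is matching the aperture range. The abstract theorem proves its $g_{\lambda}^*$-characterization for $\lambda$ above a threshold expressed through the indices $\tz$, $h$, and $q_0$; the point is that, since Theorem \ref{condition} forces $h=\frac1s$, one has $\frac2h=2s$, and the free index $\tz$ may be taken arbitrarily close to $\frac1s$ from below. Thus, given any $\lambda\in(2s,\fz)$, I would choose $\tz\in(0,\frac1s)$ close enough to $\frac1s$ (and $q_0$ within the allowed range) so that the threshold of \cite[Theorem 4.11]{cwyz19} drops below $\lambda$; this is legitimate because the equivalence is established separately for each fixed $\lambda$. I expect this careful bookkeeping of the indices $\tz$, $h$, $q_0$ against the admissible $\lambda$-range---rather than any new estimate---to be the main obstacle, and it is the only place where the specific value $2s$ enters.

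Once the hypotheses are so arranged, \cite[Theorem 4.11]{cwyz19} with $X:=L_\omega^{p(\cdot)}(\rn)$ yields at once the mutual equivalence of (i)--(iii), the vanishing-at-infinity conditions in (ii) and (iii), and the norm-equivalence chain relating $\|f\|_{H_\omega^{p(\cdot)}(\rn)}$, $\|g(f)\|_{L_\omega^{p(\cdot)}(\rn)}$, and $\|g_{\lambda}^*(f)\|_{L_\omega^{p(\cdot)}(\rn)}$; transcribing this conclusion completes the proof.
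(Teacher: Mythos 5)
Your proposal is correct and is essentially the paper's own proof: the paper deduces Theorem \ref{gfunction} in exactly this way, by invoking \cite[Theorem 4.11]{cwyz19} with $X:=L_\omega^{p(\cdot)}(\rn)$ and using Theorem \ref{condition} (i.e., Lemmas \ref{condition1} and \ref{condition2} with $h=\frac1s$, $\tz\in(0,\frac1s)$, $q_0\in(\max\{(\kappa_{\omega}^s)'/s,1\},\fz)$) to verify its hypotheses. Your additional bookkeeping---taking $\tz$ arbitrarily close to $\frac1s$ so that the abstract threshold drops below any given $\lambda\in(2s,\fz)$---is precisely the detail the paper leaves implicit, and it is needed to recover the stated range (consistently with Remark \ref{gfunctionrem}(ii), where this yields the sharp range $(2/p,\fz)$ in the unweighted constant-exponent case).
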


\begin{remark}\label{gfunctionrem}
\begin{enumerate}
\item [\rm (i)]
We point out that Theorem \ref{gfunction} is completely new.

\item[\rm (ii)]
If $p(\cdot)\equiv p\in(0,1]$ and $\omega\equiv1$, then,
by Remark \ref{conrem} and Theorem \ref{gfunction},
we obtain the Littlewood--Paley $g_{\lambda}^*$ function
of $L^{p}(\mathbb{R}^n)$ with $\lz\in(2/p,\fz)$,
which coincides with the best known range.
\end{enumerate}
\end{remark}

The following Campanato-type function space is inspired by
\cite[Definition 3.2]{zhyy22}.

\begin{definition}\label{campanato}
Let $p(\cdot)\in\cp(\rn)$, $\omega\in\mathcal{W}_{p(\cdot)}(\rn)$
with $s\in(1,\fz)$, $q\in[1,{\infty})$, and $d\in\zz_+$.
Then the \emph{Campanato space}
$\mathcal{L}_{\omega,q,d}^{p(\cdot)}({{\RR}^n})$ is defined
to be the set of all $f\in L^q_{\rm loc}({{\RR}^n})$ such that
\begin{align*}
\|f\|_{\mathcal{L}_{\omega,q,d}^{p(\cdot)}({{\RR}^n})}
:=&\,\sup\lf\|\lf\{\sum_{j=1}^m
\lf[\frac{{\lambda}_j}{\|{\mathbf{1}}_{B_j}\|
_{L_\omega^{p(\cdot)}({{\RR}^n})}}\r]^{\frac{1}{s}}
{\mathbf{1}}_{B_j}\r\}^{s}\r\|
_{L_\omega^{p(\cdot)}({{\RR}^n})}^{-1}\\
&\quad\quad\times\sum_{k=1}^m\lf\{\frac{{\lambda}_k|B_k|}
{\|{\mathbf{1}}_{B_k}\|_{L_\omega^{p(\cdot)}({{\RR}^n})}}
\lf[\frac1{|B_k|}\int_{B_k}
\lf|f(x)-P^d_{B_k}f(x)\r|^q \,dx\r]^\frac1q\r\}<\fz,
\end{align*}
where $P^d_Bf$ denotes the minimizing polynomial of
$f$ on $B$ with degree not greater than $d$,
and the supremum is taken over all $m\in\nn$,
balls $\{B_j\}^m_{j=1}$ in ${\RR}^n$,
and $\{\lz_j\}^m_{j=1}\st[0,\fz)$
with $\sum_{j=1}^m\lz_j\neq0$.
\end{definition}

By Theorem \ref{condition}, \cite[p.\,73]{cfbook}
(or \cite[Lemma 2.3.16]{dhr11}), and \cite[Theorem 3.14]{zhyy22}
with $X:=L_\omega^{p(\cdot)}({{\RR}^n})$ therein,
we obtain the duality of $H_\omega^{p(\cdot)}({{\RR}^n})$ as follows.

\begin{theorem}\label{dual}
Let $p(\cdot)\in\cp(\rn)$, $\omega\in\mathcal{W}_{p(\cdot)}(\rn)$
with $s\in(1,\fz)$, $r\in(\max\{(\kappa_{\omega}^s)'/s,1\},\infty]$,
and $d\in\nn\cap[\lfloor ns-n\rfloor,\infty)$.
Then the dual space of $H_\omega^{p(\cdot)}({{\RR}^n})$,
denoted by $(H_\omega^{p(\cdot)}({{\RR}^n}))^*$,
is $\mathcal{L}_{\omega,r',d}^{p(\cdot)}({{\RR}^n})$
with $1/r+1/r'=1$ in the following sense:
\begin{enumerate}
\item[{\rm (i)}] Let $f\in\mathcal{L}_{\omega,r',d}^{p(\cdot)}({{\RR}^n})$.
Then the linear functional
\begin{align}\label{10.21.x1}
T_f:\ g\rightarrow T_f(g):=\int_{{{\RR}^n}}f(x)g(x)\,dx,
\end{align}
originally defined for any $g\in H_{\omega,{\rm fin}}^{p(\cdot),r,d}(\rn)$,
has a bounded extension to $H_\omega^{p(\cdot)}({{\RR}^n})$.

\item[{\rm (ii)}] Conversely, every continuous linear
functional on $H_\omega^{p(\cdot)}({{\RR}^n})$ arises
as in \eqref{10.21.x1} with a unique
$f\in\mathcal{L}_{\omega,r',d}^{p(\cdot)}({{\RR}^n})$.
\end{enumerate}

Moreover,
$\|f\|_{\mathcal{L}_{\omega,r',d}^{p(\cdot)}({{\RR}^n})}
\sim\|T_f\|_{(H_\omega^{p(\cdot)}({{\RR}^n}))^*}$,
where the implicit positive equivalence constant
is independent of $f$.
\end{theorem}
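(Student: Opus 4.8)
The plan is to realize $H_\omega^{p(\cdot)}(\rn)$ as the Hardy space $H_X(\rn)$ associated with the ball quasi-Banach function space $X:=L_\omega^{p(\cdot)}(\rn)$ and then to specialize the abstract Campanato-type duality theorem \cite[Theorem 3.14]{zhyy22}. By Lemma \ref{lx.x}, the space $X=L_\omega^{p(\cdot)}(\rn)$ is a ball quasi-Banach function space, so $H_X(\rn)$ is well defined and, via the Lusin area function, coincides with $H_\omega^{p(\cdot)}(\rn)$ as in Definition \ref{hardy}. Thus the whole proof reduces to checking that $X$ meets the hypotheses of \cite[Theorem 3.14]{zhyy22} with the indicated choices of parameters, after which both parts (i) and (ii) and the norm equivalence follow by direct transcription.

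First I would verify the structural hypotheses imposed on the underlying space in \cite[Theorem 3.14]{zhyy22}. By Theorem \ref{condition}, the space $X$ satisfies the conclusions of both Lemmas \ref{condition1} and \ref{condition2}, namely the Fefferman--Stein-type vector-valued inequality for the powered operator $\cm^{(\tz)}$ with $\tz\in(0,1/s)$, and the boundedness of $\cm^{((q_0/h)')}$ on the associate space $(X^{1/h})'$ with $h=1/s$ and $q_0\in(\max\{(\kappa_{\omega}^s)'/s,1\},\fz)$; these are exactly the two conditions under which the Campanato-type duality of $H_X(\rn)$ is established in \cite{zhyy22}. The remaining ingredient, which lets one identify the abstractly defined associate norm appearing in \cite{zhyy22} with a concrete quantity, is the norm conjugate formula for variable Lebesgue spaces recorded in \cite[p.\,73]{cfbook} (or \cite[Lemma 2.3.16]{dhr11}); combined with Lemma \ref{1.17.x1} this pins down the associate space of $X^{1/h}$ explicitly.

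Next I would match the parameters. The admissible range $r\in(\max\{(\kappa_{\omega}^s)'/s,1\},\fz]$ is precisely the range of integrability exponents for which the atomic, finite atomic, and molecular characterizations hold (Theorems \ref{atomic}, \ref{finatom}, and \ref{molecular}), and the degree bound $d\in\nn\cap[\lfloor ns-n\rfloor,\fz)$ is the vanishing-moment threshold governed by $s_\omega$ through Remark \ref{inrem}(ii). With these identifications, the abstract Campanato space in \cite{zhyy22} associated with $X$ and exponent $r'$ reduces term by term to $\mathcal{L}_{\omega,r',d}^{p(\cdot)}(\rn)$ of Definition \ref{campanato}, since the building blocks $\|\ch1_B\|_X=\|\ch1_B\|_{L_\omega^{p(\cdot)}(\rn)}$ and the local oscillation $[\frac{1}{|B|}\int_B|f-P^d_Bf|^{r'}\,dx]^{1/r'}$ are literally the same. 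Applying \cite[Theorem 3.14]{zhyy22} then yields the bounded extension of $T_f$ in part (i), the surjectivity together with the uniqueness of $f$ in part (ii), and the equivalence $\|f\|_{\mathcal{L}_{\omega,r',d}^{p(\cdot)}(\rn)}\sim\|T_f\|_{(H_\omega^{p(\cdot)}(\rn))^*}$.

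The main obstacle I anticipate is the bookkeeping needed to confirm that the parameter ranges in \cite[Theorem 3.14]{zhyy22}, which are phrased abstractly via the convexity and maximal-boundedness indices of $X$, translate exactly into the stated ranges for $r$ and $d$; in particular one must track how the index $q_0$ and the convexification exponent $h=1/s$ produced by Theorem \ref{condition} determine the threshold $\max\{(\kappa_{\omega}^s)'/s,1\}$ for $r$ and the integer $\lfloor ns-n\rfloor$ for $d$. Once this correspondence is established, the rest is a routine specialization of the abstract theorem that requires no further analysis.
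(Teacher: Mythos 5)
Your proposal is correct and follows essentially the same route as the paper: the paper's proof consists precisely of invoking Theorem \ref{condition} to verify the hypotheses of the abstract duality theorem \cite[Theorem 3.14]{zhyy22} with $X:=L_\omega^{p(\cdot)}(\rn)$, together with the norm conjugate formula of \cite[p.\,73]{cfbook} (or \cite[Lemma 2.3.16]{dhr11}) to identify the associate space concretely. Your additional bookkeeping on how $h=1/s$, $q_0$, and $\kappa_\omega^s$ determine the ranges of $r$ and $d$ is exactly the specialization the paper leaves implicit.
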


\begin{remark}\label{dualrem}
We point out that Theorem \ref{dual} is completely new.
\end{remark}

Throughout this article, for any
$\delta:=({\delta}_1,\ldots,{\delta}_n)\in\zz^n_+$ and
$x:=(x_1,\ldots,x_n)\in{{\RR}^n}$,
$$x^{{\delta}}:=x_1^{{\delta}_1}\cdots x_n^{{\delta}_n}.$$
For any $d\in\zz_+$, we use ${\mathcal C}^d({{\RR}^n})$ to denote the
\emph{set of all functions having continuous classical derivatives up to order  $d$}.
For any given $\alpha\in(0,1]$ and $d\in\zz_+$,
let ${\mathcal C}_{\alpha,d}({{\RR}^n})$ be the
\emph{set of all functions} $\varphi\in{\mathcal C}^d({{\RR}^n})$ such that
$\mathop\mathrm{\,supp\,}\varphi:=\overline{\{x\in\rn:\ \varphi(x)\neq0\}}
\subset\{x\in\mathbb R^n:\ |x|\le 1\}$,
$\int_{\mathbb R^n}\varphi(x)x^{\gamma}\,dx=0$ for any ${\gamma}\in\zz_+^n$
with $|{\gamma}|\le d$, and, for any $x_1,x_2\in{{\RR}^n}$ and
$\nu\in\zz_+^n$ with $|\nu|=d$,
\begin{align*}
|D^\nu\varphi(x_1)-D^\nu\varphi(x_2)|\le|x_1-x_2|^\alpha.
\end{align*}
For any $f\in L_{\rm loc}^1({{\RR}^n})$
and $(y,t)\in{\mathbb{R}}_+^{n+1}$, let
\begin{align*}
A_{\alpha,d}(f)(y,t):=
\sup_{\varphi\in{\mathcal C}_{\alpha,d}({{\RR}^n})}|f*\varphi_t(y)|.
\end{align*}
Then the \emph{intrinsic Littlewood--Paley $g$-function} $g_{\alpha,d}(f)$,
the \emph{intrinsic Littlewood--Paley $g_\lambda^*$-function}
$g_{\lambda,\alpha,d}^*(f)$ with $\lambda\in(0,{\infty})$,
and the \emph{intrinsic Lusin area function}
$S_{\alpha,d}(f)$ of $f$ are defined, respectively, by setting,
for any $x\in{{\RR}^n}$,
\begin{align*}
g_{\alpha,d}(f)(x):=
\left\{\int_0^\infty\left[A_{\alpha,d}(f)(x,t)\right]^2\,\frac{dt}{t}\right\}^{1/2},
\end{align*}
\begin{align*}
g_{\lambda,\alpha,d}^*(f)(x):=\left\{\int_0^\infty\int_{\mathbb  R^n}
\left(\frac{t}{t+|x-y|}\right)^{{\lambda} n}\left[A_{\alpha,d}(f)(y,t)\right]^2\,
\frac{\,dy\,dt}{t^{n+1}}\right\}^{1/2},
\end{align*}
and
\begin{align*}
S_{\alpha,d}(f)(x):=\left\{\int_{\Gamma(x)}\left[A_{\alpha,d}(f)(y,t)
\right]^2\,\frac{\,dy\,dt}{t^{n+1}}\right\}^{1/2}.
\end{align*}

By Theorem \ref{condition}, Theorem \ref{dual},
and \cite[Theorems 5.5 and 5.6]{yyy21}
(or \cite[Theorems 1.15 and 1.16]{yyy20}) with
$X:=L_\omega^{p(\cdot)}({{\RR}^n})$ therein,
we obtain the intrinsic square function characterizations
of $H_\omega^{p(\cdot)}({{\RR}^n})$ as follows.

\begin{theorem}\label{intfunction}
Let $\alpha\in(0,1]$ and $d\in\zz_+$. Assume that $p(\cdot)\in\cp(\rn)$, $\omega\in\mathcal{W}_{p(\cdot)}(\rn)$ with $s\in(1,(n+\az+d)/n)$,
and
$$\lz\in\lf(\max\lf\{2s,2s+1-\frac{2}{\max\{(\kappa_{\omega}^s)'/s,1\}}\r\},\fz\r).$$
Then the following statements are mutually equivalent:
\begin{enumerate}
\item [\rm (i)]
$f\in H_\omega^{p(\cdot)}({{\RR}^n})$;

\item[\rm (ii)]
$f\in(\mathcal{L}_{\omega,1,d}^{p(\cdot)}({{\RR}^n}))^*$,
$f$ vanishes weakly at infinity,
and $g_{\alpha,d}(f)\in L_\omega^{p(\cdot)}({{\RR}^n})$;

\item[\rm (iii)]
$f\in(\mathcal{L}_{\omega,1,d}^{p(\cdot)}({{\RR}^n}))^*$,
$f$ vanishes weakly at infinity,
and $g_{\lambda,\alpha,d}^*(f)\in L_\omega^{p(\cdot)}({{\RR}^n})$;

\item[\rm (iv)]
$f\in(\mathcal{L}_{\omega,1,d}^{p(\cdot)}({{\RR}^n}))^*$,
$f$ vanishes weakly at infinity,
and $S_{\alpha,d}(f)\in L_\omega^{p(\cdot)}({{\RR}^n})$.
\end{enumerate}
Moreover,
\begin{equation*}
\|f\|_{H_\omega^{p(\cdot)}({{\RR}^n})}
\sim\lf\|g_{\alpha,d}(f)\r\|_{L_\omega^{p(\cdot)}({{\RR}^n})}
\sim\lf\|g_{\lambda,\alpha,d}^*(f)\r\|_{L_\omega^{p(\cdot)}({{\RR}^n})}
\sim\lf\|S_{\alpha,d}(f)\r\|_{L_\omega^{p(\cdot)}({{\RR}^n})},
\end{equation*}
where implicit positive equivalence constants are independent of $f$.
\end{theorem}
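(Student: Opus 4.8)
The plan is to realize $H_\omega^{p(\cdot)}(\rn)$ as the Hardy space $H_X(\rn)$ associated with the ball quasi-Banach function space $X:=L_\omega^{p(\cdot)}(\rn)$ and then to invoke the abstract intrinsic square function characterizations of \cite[Theorems 5.5 and 5.6]{yyy21}. By Lemma \ref{lx.x}, $L_\omega^{p(\cdot)}(\rn)$ is indeed a ball quasi-Banach function space, so the only genuine work is to verify that it meets the structural hypotheses imposed on $X$ in \cite{yyy21} and to check that the explicit parameter ranges in the statement are exactly those produced when those hypotheses are specialized.

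First I would record the two structural assumptions on $X$ needed by the abstract theorems: a Fefferman--Stein vector-valued maximal inequality on $X$ (for some $\tz<h$ with $\tz,h\in(0,1]$) and the boundedness of a suitably powered Hardy--Littlewood maximal operator on the associate space $(X^{1/h})'$. Both are supplied at once by Theorem \ref{condition}, with $\tz\in(0,\frac1s)$, $h=\frac1s$, and $q_0\in(\max\{(\kappa_{\omega}^s)'/s,1\},\fz)$. In addition, the intrinsic characterizations require the dual-space side condition used to make the square functions well defined on $\cs'(\rn)$; this is exactly the identification $(H_\omega^{p(\cdot)}(\rn))^*=\mathcal{L}_{\omega,r',d}^{p(\cdot)}(\rn)$ of Theorem \ref{dual}, which shows that membership in $(\mathcal{L}_{\omega,1,d}^{p(\cdot)}(\rn))^*$ appearing in (ii)--(iv) is the correct hypothesis, paired with the vanishing-weakly-at-infinity requirement.

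Next I would translate the abstract parameter conditions of \cite{yyy21} into the explicit ones displayed here. The degree $d$ and smoothness $\az$ enter through the critical smoothness governed by $h=\frac1s$: combining the atomic and vanishing-moment threshold $\lfloor ns-n\rfloor$ from Theorem \ref{atomic} with the intrinsic smoothness $\az$ forces $ns-n<\az+d$, which is precisely $s\in(1,(n+\az+d)/n)$. The lower bound on $\lz$ likewise combines the tangential (aperture) condition $\lz>2s$ already seen in Theorem \ref{gfunction} with the extra requirement coming from the integrability exponent $q_0$, governed by $\max\{(\kappa_{\omega}^s)'/s,1\}$; unwinding the latter yields the term $2s+1-\frac{2}{\max\{(\kappa_{\omega}^s)'/s,1\}}$, so that the admissible range is the maximum of the two, as stated.

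With the hypotheses verified and the parameters matched, the equivalences among (i), (ii), (iii), and (iv), together with the asserted norm equivalences, then follow immediately from \cite[Theorems 5.5 and 5.6]{yyy21} applied with $X:=L_\omega^{p(\cdot)}(\rn)$. The main obstacle is the bookkeeping in the parameter translation: in particular, confirming that the $\lz$-range obtained abstractly is sharp relative to $2s$ and that the interplay among $\az$, $d$, and $s$ encoded in $s<(n+\az+d)/n$ matches the smoothness demanded by the intrinsic functional $A_{\az,d}$; everything else is a routine specialization of the general theory.
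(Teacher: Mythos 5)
Your proposal is correct and follows essentially the same route as the paper: the paper likewise obtains Theorem \ref{intfunction} by specializing \cite[Theorems 5.5 and 5.6]{yyy21} (equivalently \cite[Theorems 1.15 and 1.16]{yyy20}) to $X:=L_\omega^{p(\cdot)}(\rn)$, with Theorem \ref{condition} supplying the Fefferman--Stein vector-valued inequality and the associate-space maximal bound (with $\tz\in(0,\frac1s)$, $h=\frac1s$, $q_0\in(\max\{(\kappa_{\omega}^s)'/s,1\},\fz)$) and Theorem \ref{dual} supplying the Campanato duality needed for the hypothesis $f\in(\mathcal{L}_{\omega,1,d}^{p(\cdot)}(\rn))^*$. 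Your parameter bookkeeping for $s\in(1,(n+\az+d)/n)$ and the lower bound on $\lz$ matches the specialization the paper performs implicitly.
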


\begin{remark}\label{intfunctionrem}
\begin{enumerate}
\item [\rm (i)]
We point out that, when $d=0$, the boundedness of intrinsic
square functions in Theorem \ref{intfunction} improves the
corresponding results in \cite[Theorems 4.8 and 4.9]{ho19} by
weakening the condition $\omega^{p_*}\in\mathcal{A}_{p(\cdot)/p_*}(\rn)$
into $\omega\in\mathcal{W}_{p(\cdot)}(\rn)$;
the rest of Theorem \ref{intfunction} is completely new.

\item[\rm (ii)]
If $p(\cdot)\equiv p\in(0,1]$ and $\omega\equiv1$, then,
by Remark \ref{conrem} and Theorem \ref{intfunction},
we obtain the intrinsic Littlewood--Paley $g_{\lambda}^*$
function of $L^{p}(\mathbb{R}^n)$ with $\lz\in(2/p,\fz)$,
which coincides with the best known range.
\end{enumerate}
\end{remark}

\section{Boundedness of Some Sublinear Operators
on Weighted Variable Hardy Spaces\label{s4}}

In this section, we obtain the boundedness of some sublinear operators,
including Calder\'on--Zygmund operators and Bochner--Riesz means,
on weighted variable Hardy spaces $H_\omega^{p(\cdot)}({{\RR}^n})$
by viewing $H_\omega^{p(\cdot)}({{\RR}^n})$ as special cases
of Hardy spaces $H_X({{\RR}^n})$ associated with ball
quasi-Banach function spaces $X$ and applying known results
of $H_X({{\RR}^n})$ obtained in \cite{tz23,wyy}.

\subsection{Calder\'on--Zygmund Operators\label{s4x}}

Recall that, for any given $\delta\in(0,1)$,
a \emph{convolutional $\delta$-type Calder\'on--Zygmund operator}
$T$ means that: $T$ is a linear bounded operator on $L^2(\rn)$
with kernel $k\in\cs'(\rn)$ coinciding with a locally integrable
function on $\rn\backslash \{\vec{0}_n\}$ and satisfying that
there exists a positive constant $C$ such that,
for any $x$, $y\in\rn$ with $|x|>2|y|$,
$$|k(x-y)-k(x)|\le C\frac{|y|^\delta}{|x|^{n+\delta}},$$
and that, for any $f\in L^2(\rn)$, $Tf(x)=k*f(x)$.

Throughout this article,
for any $\beta=(\beta_1,\ldots,\beta_n)\in \zz_+^n$,
any $\beta$-order differentiable function $F(\cdot,\cdot)$
on $\rn\times \rn$, and any $(x,y)\in \rn\times \rn$, let
$$\partial_{(2)}^{\beta}F(x,y):=\frac{\partial^{|\beta|}}
{\partial y_1^{\beta_1}\cdots\partial y_n^{\beta_n}}F(x,y).$$
Recall that, for any given $\gamma\in(0,\fz)$, a linear operator
$T$ is called a \emph{$\gamma$-order Calder\'on--Zygmund operator}
if $T$ is bounded on $L^2(\rn)$ and its kernel
$$k(x,y):\ (\rn\times\rn)\backslash\{(x,x):\ x\in\rn\}\to\mathbb C$$
satisfies that there exists a positive constant $C$ such that,
for any $\az\in\zz^n_+$ with $|\az|=\lceil\gamma\rceil-1$
and any $x,y,z\in\rn$ with $|x-y|>2|y-z|$,
\begin{align*}
\lf|\partial_{(2)}^\az k(x,y)-\partial_{(2)}^\az k(x,z)\r|\le
C\frac{|y-z|^{\gamma-\lceil\gamma\rceil+1}}{|x-y|^{n+\gamma}},
\end{align*}
and that, for any $f\in L^2(\rn)$ having compact support and $x\notin\supp f$,
$$Tf(x)=\int_{\supp f}k(x,y)f(y)\,dy.$$
For any given $m\in\nn$, an operator $T$ is said to have
the \emph{vanishing moment condition up to order $m$} if,
for any $a\in L^2(\rn)$ with compact support satisfying that,
for any $\beta\in\zz^n_+$ with $|\beta|\le m$, $\int_{\rn}a(x)x^\beta\,dx=0$,
it holds true that $\int_{\rn}Ta(x)x^\beta\,dx=0$.

By Theorem \ref{condition}, \cite[p.\,73]{cfbook}
(or \cite[Lemma 2.3.16]{dhr11}), and \cite[Theorems 3.5 and 3.11]{wyy}
with $X:=L_\omega^{p(\cdot)}({{\RR}^n})$ therein,
we obtain the boundedness of Calder\'on--Zygmund operators
on $H_\omega^{p(\cdot)}({{\RR}^n})$ as follows.

\begin{theorem}\label{cz-bdn}
Assume that $p(\cdot)\in\cp(\rn)$ and
$\omega\in\mathcal{W}_{p(\cdot)}(\rn)$ with $s\in(1,\fz)$.
\begin{enumerate}
\item [\rm (i)]
Let $\delta\in(0,1)$ and $T$ be a convolutional
$\delta$-type Calder\'on--Zygmund operator.
If $s\in(1,\frac{n+\delta}{n})$, then $T$ has an unique
extension on $H_\omega^{p(\cdot)}({{\RR}^n})$ and, moreover,
for any $f\in H_\omega^{p(\cdot)}({{\RR}^n})$,
$$\|Tf\|_{H_\omega^{p(\cdot)}({{\RR}^n})}\le
C\|f\|_{H_\omega^{p(\cdot)}({{\RR}^n})},$$
where $C$ is a positive constant independent of $f$.

\item[\rm (ii)]
Let $\gamma\in(0,\fz)$ and $T$ be a $\gamma$-order
Calder\'on--Zygmund operator with the vanishing
moment condition up to order $\lceil\gamma\rceil-1$.
If $s\in(\frac{n+\lceil\gamma\rceil-1}{n},\frac{n+\gamma}{n})$,
then $T$ has a unique extension on $H_\omega^{p(\cdot)}({{\RR}^n})$ and,
moreover, for any $f\in H_\omega^{p(\cdot)}({{\RR}^n})$,
$$\|Tf\|_{H_\omega^{p(\cdot)}({{\RR}^n})}\le
C\|f\|_{H_\omega^{p(\cdot)}({{\RR}^n})},$$
where $C$ is a positive constant independent of $f$.
\end{enumerate}
\end{theorem}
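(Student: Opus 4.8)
The plan is to view $H_\omega^{p(\cdot)}(\rn)$ as the Hardy space $H_X(\rn)$ with $X:=L_\omega^{p(\cdot)}(\rn)$ and then to invoke the abstract boundedness results \cite[Theorems 3.5 and 3.11]{wyy}. First I would recall, via Lemma \ref{lx.x}, that $X$ is a ball quasi-Banach function space, and, via Theorem \ref{condition}, that $X$ simultaneously satisfies the Fefferman--Stein vector-valued maximal inequality of Lemma \ref{condition1} (with $\tz\in(0,1/s)$ and $h=1/s$) and the boundedness of the powered maximal operator $\cm^{((q_0/h)')}$ on the associate space $(X^{1/h})'$ from Lemma \ref{condition2} (with $q_0\in(\max\{(\kappa_{\omega}^s)'/s,1\},\fz)$). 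These are precisely the two structural hypotheses imposed on $X$ in \cite{wyy}; the reference \cite[p.\,73]{cfbook} (or \cite[Lemma 2.3.16]{dhr11}) supplies the density of a suitable class of test functions in $L_\omega^{p(\cdot)}(\rn)$, so that an a priori estimate obtained on a dense subspace (e.g. on $H_{\omega,{\rm fin}}^{p(\cdot),r,d}(\rn)$, dense by Theorem \ref{finatom}) extends uniquely to all of $H_\omega^{p(\cdot)}(\rn)$.

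The second step is to align the smoothness and moment parameters. The admissible range of the operator order in \cite{wyy} is governed, in our setting, by an index controlled by $s$ through the atomic and molecular characterizations: by Theorem \ref{atomic}, atoms of $H_\omega^{p(\cdot)}(\rn)$ carry vanishing moments up to order $d\in\nn\cap[\lfloor ns-n\rfloor,\fz)$. For part (i), the hypothesis $s\in(1,(n+\delta)/n)$ is equivalent to $n(s-1)<\delta$, which ensures that a convolutional $\delta$-type Calder\'on--Zygmund operator sends an atom to a molecule whose decay order exceeds the threshold $n(s-\frac{1}{\max\{(\kappa_{\omega}^s)'/s,1\}})$ appearing in Theorem \ref{molecular}; that molecular characterization then places $Tf$ back in $H_\omega^{p(\cdot)}(\rn)$ with the desired norm control, which is exactly the content of \cite[Theorem 3.5]{wyy}.

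For part (ii) I would additionally track the lower bound on $s$. The hypothesis $s>(n+\lceil\gamma\rceil-1)/n$ forces $\lfloor ns-n\rfloor\ge\lceil\gamma\rceil-1$, so that the vanishing moment condition of $T$ up to order $\lceil\gamma\rceil-1$ matches the cancellation carried by the atoms, while $s<(n+\gamma)/n$, i.e. $n(s-1)<\gamma$, guarantees sufficient decay of $Ta$ via the order-$\gamma$ kernel smoothness. Feeding these two inequalities into \cite[Theorem 3.11]{wyy} yields the stated bound. The main obstacle I expect is the parameter bookkeeping in this second step: one must verify that the indices $(\tz,h,q_0)$ produced by Theorem \ref{condition}, together with the relation between $s$ and the atom order $d$ from Theorem \ref{atomic} and the molecular decay threshold from Theorem \ref{molecular}, fall simultaneously within the admissible windows required by \cite[Theorems 3.5 and 3.11]{wyy}. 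The genuine analytic work (the maximal inequalities and the atom-to-molecule estimates) has already been absorbed into Theorem \ref{condition} and into those abstract theorems, so the remaining argument is essentially a verification of hypotheses.
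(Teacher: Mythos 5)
Your proposal follows exactly the paper's route: the paper proves Theorem \ref{cz-bdn} precisely by combining Theorem \ref{condition} (which verifies, with $\tz\in(0,\frac1s)$, $h=\frac1s$, and $q_0\in(\max\{(\kappa_{\omega}^s)'/s,1\},\fz)$, the two structural assumptions on $X:=L_\omega^{p(\cdot)}(\rn)$ required in \cite{wyy}), the density result in \cite[p.\,73]{cfbook} (or \cite[Lemma 2.3.16]{dhr11}), and the abstract boundedness results \cite[Theorems 3.5 and 3.11]{wyy}, with the hypotheses $s\in(1,\frac{n+\delta}{n})$ and $s\in(\frac{n+\lceil\gamma\rceil-1}{n},\frac{n+\gamma}{n})$ serving exactly to place $h=\frac1s$ (and hence $\tz$) in the admissible windows of those theorems, as you note. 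Your parameter bookkeeping is correct, so the proposal is a faithful (indeed more detailed) version of the paper's own argument.
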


\begin{remark}\label{cz-bdnrem}
\begin{enumerate}
\item [\rm (i)]
We point out that a different variant of Theorem \ref{cz-bdn}(ii)
was obtained in \cite[Theorem 4.3]{ho19} which uses different
Calder\'on--Zygmund operators and the condition
$\omega^{p_*}\in\mathcal{A}_{p(\cdot)/p_*}(\rn)$
instead of $\omega\in\mathcal{W}_{p(\cdot)}(\rn)$.

\item[\rm (ii)]
If $p(\cdot)\equiv p\in(0,1]$ and $\omega\equiv1$, then,
by Remark \ref{conrem} and Theorem \ref{cz-bdn},
we obtain the boundedness of convolutional $\delta$-type
and non-convolutional $\gamma$-order Calder\'on--Zygmund operators,
with the vanishing moment condition up to order $\lceil\gamma\rceil-1$,
on $H^{p}({{\RR}^n})$ with, respectively, $p\in(\frac{n}{n+\delta},1]$
and $p\in(\frac{n}{n+\gamma},\frac{n}{n+\lceil\gamma\rceil-1}]$.
\end{enumerate}
\end{remark}

\subsection{Bochner--Riesz Means\label{s4y}}
We first recall the definitions of the Bochner--Riesz means.
Let $\delta,\epsilon\in(0,\fz)$. For any given
$f\in{\mathcal S}({{\RR}^n})$, the Bochner--Riesz operator
$B^{\delta}_{1/\epsilon}(f)$ of order $\delta$ is defined by setting,
for any $x\in\rn$,
$$B^{\delta}_{1/\epsilon}(f)(x):=(f*\varphi_{\epsilon})(x),$$
where $\varphi(x):=\mathscr{F}((1-|\cdot|^2)_+^{\delta})(x)$
and, for any $a\in\rn$, $(a)_+:=\max\{a,0\}$;
moreover, the maximal Bochner--Riesz means $B^{\delta}_{*}(f)$ is
defined by setting, for any $x\in\rn$,
$$B^{\delta}_{*}(f)(x)
:=\sup_{\epsilon\in(0,\fz)}\lf|B^{\delta}_{1/\epsilon}(f)(x)\r|.$$

By Theorem \ref{condition}, \cite[p.\,73]{cfbook}
(or \cite[Lemma 2.3.16]{dhr11}), and \cite[Theorem 1.8]{tz23}
with $X:=L_\omega^{p(\cdot)}({{\RR}^n})$ therein,
we obtain the boundedness of the maximal Bochner--Riesz means
on $H_\omega^{p(\cdot)}({{\RR}^n})$ as follows.

\begin{theorem}\label{br-bdn}
Let $p(\cdot)\in\cp(\rn)$,
$\omega\in\mathcal{W}_{p(\cdot)}(\rn)$ with $s\in(1,\fz)$,
$\delta\in(\frac{n-1}{2},\fz)$, and $B^{\delta}_{*}$ be the
maximal Bochner--Riesz means.
If $s\in(1,\frac{n+1+2\delta}{2n})$, then $B^{\delta}_{*}$ has
an unique extension on $H_\omega^{p(\cdot)}({{\RR}^n})$ and,
moreover, for any $f\in H_\omega^{p(\cdot)}({{\RR}^n})$,
$$\lf\|B^{\delta}_{*}(f)\r\|_{L_\omega^{p(\cdot)}({{\RR}^n})}\le
C\|f\|_{H_\omega^{p(\cdot)}({{\RR}^n})},$$
where $C$ is a positive constant independent of $f$.
\end{theorem}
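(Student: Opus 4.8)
The plan is to realize $H_\omega^{p(\cdot)}(\rn)$ as the Hardy space $H_X(\rn)$ associated with the ball quasi-Banach function space $X:=L_\omega^{p(\cdot)}(\rn)$, and then to invoke the abstract boundedness of the maximal Bochner--Riesz means on $H_X(\rn)$ from \cite[Theorem 1.8]{tz23}. That theorem asserts that, for any ball quasi-Banach function space $X$ satisfying a Fefferman--Stein vector-valued maximal inequality (with suitable exponents $\tz<h$) together with the boundedness of a powered Hardy--Littlewood maximal operator on the associate space $(X^{1/h})'$, the operator $B^{\delta}_{*}$ maps $H_X(\rn)$ boundedly into $X$ whenever the order $\delta$ is \emph{admissible} relative to these exponents. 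Thus the whole difficulty reduces to two points: (a) verifying that $X=L_\omega^{p(\cdot)}(\rn)$ enjoys these two structural conditions with explicit exponents, and (b) checking that the admissibility range in \cite[Theorem 1.8]{tz23} is exactly the stated range $s\in(1,\frac{n+1+2\delta}{2n})$.

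First I would invoke Theorem \ref{condition}: since $\omega\in\mathcal{W}_{p(\cdot)}(\rn)$ with index $s\in(1,\fz)$, the space $L_\omega^{p(\cdot)}(\rn)$ satisfies the conclusions of Lemmas \ref{condition1} and \ref{condition2} with $\tz\in(0,\frac1s)$, $h=\frac1s$, and $q_0\in(\max\{(\kappa_\omega^s)'/s,1\},\fz)$; in particular, $X^{1/h}=(L_\omega^{p(\cdot)}(\rn))^{1/h}$ is a ball Banach function space and $\cm^{((q_0/h)')}$ is bounded on its associate space $(X^{1/h})'$. These are precisely the hypotheses on $X$ required by \cite[Theorem 1.8]{tz23}. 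I would also record the identification $H_X(\rn)=H_\omega^{p(\cdot)}(\rn)$ with equivalent quasi-norms, which is immediate from Definition \ref{hardy}, since the Lusin-area quasi-norm defining $H_\omega^{p(\cdot)}(\rn)$ coincides with the one defining $H_X(\rn)$ in \cite{shyy17} once $X=L_\omega^{p(\cdot)}(\rn)$.

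Next I would carry out the parameter bookkeeping of step (b). With $h=\frac1s$, the admissibility condition of \cite[Theorem 1.8]{tz23}, which in the model case $X=L^p(\rn)$ reads $\delta>\frac{n}{h}-\frac{n+1}{2}$, becomes $\delta>ns-\frac{n+1}{2}$; rearranging gives $2ns<n+1+2\delta$, that is, $s<\frac{n+1+2\delta}{2n}$, which is exactly the hypothesis of the theorem. The separate assumption $\delta\in(\frac{n-1}{2},\fz)$ is in fact automatic here: for the $s$-interval to be nonempty one needs $\frac{n+1+2\delta}{2n}>1$, which forces $\delta>\frac{n-1}{2}$, so the two constraints on $\delta$ and $s$ are mutually consistent. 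Combining the structural conditions verified above with this admissible range and applying \cite[Theorem 1.8]{tz23} then yields $\|B^{\delta}_{*}(f)\|_{L_\omega^{p(\cdot)}(\rn)}\ls\|f\|_{H_\omega^{p(\cdot)}(\rn)}$ for $f$ in a dense subspace.

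Finally, for the unique extension I would argue by density: Theorem \ref{finatom} shows that the finite atomic space is dense in $H_\omega^{p(\cdot)}(\rn)$, and the technical embedding supplied by \cite[p.\,73]{cfbook} (or \cite[Lemma 2.3.16]{dhr11}) guarantees $L_\omega^{p(\cdot)}(\rn)\hookrightarrow L^1_{\rm loc}(\rn)$, so that the a priori estimate on finite atoms extends uniquely and continuously, via the standard limiting argument for sublinear maximal operators, to all of $H_\omega^{p(\cdot)}(\rn)$. The main obstacle I expect lies in step (b): one must unpack the abstract statement of \cite[Theorem 1.8]{tz23} and confirm that, after the substitution $h=1/s$, its internal admissibility threshold is genuinely $\delta>ns-\frac{n+1}{2}$ and not some strictly different quantity that would shrink or enlarge the stated $s$-range; this requires tracking carefully how the exponents $\tz$, $h$, $q_0$ (and possibly $\kappa_\omega^s$) enter the hypotheses of \cite{tz23}.
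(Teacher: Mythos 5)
Your proposal follows exactly the paper's own route: the paper proves Theorem \ref{br-bdn} precisely by combining Theorem \ref{condition} (which supplies the Fefferman--Stein vector-valued inequality with $h=\frac1s$ and the boundedness of the powered maximal operator on the associate space of the $1/h$-convexification), the local embedding from \cite[p.\,73]{cfbook} (or \cite[Lemma 2.3.16]{dhr11}), and \cite[Theorem 1.8]{tz23} applied with $X:=L_\omega^{p(\cdot)}(\rn)$, with the same parameter bookkeeping $\delta>ns-\frac{n+1}{2}\Longleftrightarrow s<\frac{n+1+2\delta}{2n}$ that you carry out. Your verification of the admissibility range and the density argument for the unique extension are correct and match what the paper leaves implicit.
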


\begin{remark}\label{br-bdnrem}
\begin{enumerate}
\item [\rm (i)]
We point out that Theorem \ref{br-bdn} improves the corresponding
results in \cite[Theorem 4.5]{ho19} by weakening the condition
$\omega^{p_*}\in\mathcal{A}_{p(\cdot)/p_*}(\rn)$
into $\omega\in\mathcal{W}_{p(\cdot)}(\rn)$.

\item[\rm (ii)]
If $p(\cdot)\equiv p\in(0,1]$ and $\omega\equiv1$, then,
by Remark \ref{conrem} and Theorem \ref{br-bdn},
we obtain the boundedness of maximal Bochner--Riesz means
$B^{\delta}_{*}$ on $H^{p}({{\RR}^n})$ with
$p\in(\frac{2n}{n+1+2\delta},1]$.
\end{enumerate}
\end{remark}

At the end of this section, we point out that,
since the main results of this article do not depend
on Lebesgue measure and Euclidean metric of $\rn$,
via viewing weighted variable Lebesgue spaces as special cases
of ball quasi-Banach function spaces and applying known results
of in \cite{lyy23,lyy24,wyy24,yhyy1,yhyy2}, we can obtain some similar
real-variable results of $H_\omega^{p(\cdot)}$ on anisotropic Euclidean
spaces or spaces of homogeneous type; we omit the details.

\medskip

\noindent\textbf{Acknowledgements}\quad  This project is supported by
the National Natural Science Foundation of China (Grant No. 12301112),
China Postdoctoral Science Foundation (Grant Nos. 2022M721024 and
2023M741020), Postdoctoral Fellowship Program of CPSF (Grant No. GZC20230695),
and the Open Project Program of Key Laboratory of Mathematics and
Complex System of Beijing Normal University (Grant No. K202304).

\medskip

\noindent\textbf{Author Contributions}\quad All authors
developed and discussed the results and contributed to the final manuscript.

\medskip

\noindent\textbf{Data Availability Statement}\quad Data sharing
is not applicable to this article as no data sets were generated or
analysed.

\medskip

\noindent\textbf{Declarations}

\medskip

\noindent\textbf{Conflict of interest}\quad All authors state no conflict of interest.

\medskip

\noindent\textbf{Informed consent}\quad Informed consent has been obtained from all individuals included in this research work.

\end{document}